\newcommand\reallywidehat[1]{\arraycolsep=0pt\relax%
\begin{array}{c}
\stretchto{
  \scaleto{
    \scalerel*[\widthof{\ensuremath{#1}}]{\kern-.5pt\bigwedge\kern-.5pt}
    {\rule[-\textheight/2]{1ex}{\textheight}} 
  }{\textheight} %
}{0.8ex}\\           
#1\\                 
\rule{-1ex}{0ex}
\end{array}
}
\newcommand{\sV}{\ensuremath{\mathscr{V}}\xspace}
\newcommand{\fkU}{\ensuremath{\mathfrak{U}}\xspace}
\newcommand{\diam}{{\Diamond}}
\newcommand{\BA}{\ensuremath{\mathbb {A}}\xspace}
\newcommand{\BB}{\ensuremath{\mathbb {B}}\xspace}
\newcommand{\BC}{\ensuremath{\mathbb {C}}\xspace}
\newcommand{\BD}{\ensuremath{\mathbb {D}}\xspace}
\newcommand{\BG}{\ensuremath{\mathbb {G}}\xspace}
\newcommand{\BL}{\ensuremath{\mathbb {L}}\xspace}
\newcommand{\BM}{\ensuremath{\mathbb {M}}\xspace}
\newcommand{\BP}{\ensuremath{\mathbb {P}}\xspace}
\newcommand{\BQ}{\ensuremath{\mathbb {Q}}\xspace}
\newcommand{\BR}{\ensuremath{\mathbb {R}}\xspace}
\newcommand{\BT}{\ensuremath{\mathbb {T}}\xspace}
\newcommand{\BZ}{\ensuremath{\mathbb {Z}}\xspace}
\newcommand{\calD}{\ensuremath{\mathcal {D}}\xspace}
\newcommand{\CE}{\ensuremath{\mathcal {E}}\xspace}
\newcommand{\CG}{\ensuremath{\mathcal {G}}\xspace}
\newcommand{\CK}{\ensuremath{\mathcal {K}}\xspace}
\newcommand{\CM}{\ensuremath{\mathcal {M}}\xspace}
\newcommand{\CO}{\ensuremath{\mathcal {O}}\xspace}
\newcommand{\CP}{\ensuremath{\mathcal {P}}\xspace}
\newcommand{\CU}{\ensuremath{\mathcal {U}}\xspace}
\newcommand{\CV}{\ensuremath{\mathcal {V}}\xspace}
\newcommand{\CW}{\ensuremath{\mathcal {W}}\xspace}
\newcommand{\CY}{\ensuremath{\mathcal {Y}}\xspace}
\newcommand{\an}{{\mathrm{an}}}
\DeclareMathOperator{\Gal}{Gal}
\newcommand{\GL}{\mathrm{GL}}
\DeclareMathOperator{\Spa}{Spa\,}
\DeclareMathOperator{\Spec}{Spec\,}
\DeclareMathOperator{\Spd}{Spd\,}
\DeclareMathOperator{\Spf}{Spf\,}
\newcommand{\Sp}{{\mathrm{Sp}}}
\newtheorem{theorem}{Theorem}
\newtheorem{proposition}[theorem]{Proposition}
\newtheorem{lemma}[theorem]{Lemma}
\newtheorem{corollary}[theorem]{Corollary}
\theoremstyle{definition}
\newtheorem{remark}[theorem]{Remark}
\newenvironment{altitemize}
   {\begin{list}
      {$\bullet$}
      {\setlength{\labelwidth}{0pt}
	   \setlength{\itemindent}{5pt}
       \setlength{\labelsep}{5pt}
       \setlength{\leftmargin}{0pt}
       \setlength{\itemsep}{\the\smallskipamount}
      }}
   {\end{list}}
\numberwithin{equation}{subsection}
\numberwithin{theorem}{subsection}
\renewcommand{\to}{%
   \ifbool{@display}{\longrightarrow}{\rightarrow}%
   }
\let\shortmapsto\mapsto
\renewcommand{\mapsto}{%
   \ifbool{@display}{\longmapsto}{\shortmapsto}%
   }
\newlength{\olen}
\newlength{\ulen}
\newlength{\xlen}
\newcommand{\xra}[2][]{%
   \ifbool{@display}%
      {\settowidth{\olen}{$\overset{#2}{\longrightarrow}$}%
       \settowidth{\ulen}{$\underset{#1}{\longrightarrow}$}%
       \settowidth{\xlen}{$\xrightarrow[#1]{#2}$}%
       \ifdimgreater{\olen}{\xlen}%
          {\underset{#1}{\overset{#2}{\longrightarrow}}}%
          {\ifdimgreater{\ulen}{\xlen}%
             {\underset{#1}{\overset{#2}{\longrightarrow}}}
             {\xrightarrow[#1]{#2}}}}%
      {\xrightarrow[#1]{#2}}
   }
\newcommand{\xyra}[2][]{%
   \settowidth{\xlen}{$\xrightarrow[#1]{#2}$}%
   \ifbool{@display}%
      {\settowidth{\olen}{$\overset{#2}{\longrightarrow}$}%
       \settowidth{\ulen}{$\underset{#1}{\longrightarrow}$}%
       \ifdimgreater{\olen}{\xlen}%
          {\mathrel{\xymatrix@M=.12ex@C=3.2ex{\ar[r]^-{#2}_-{#1} &}}}%
          {\ifdimgreater{\ulen}{\xlen}%
             {\mathrel{\xymatrix@M=.12ex@C=3.2ex{\ar[r]^-{#2}_-{#1} &}}}
             {\mathrel{\xymatrix@M=.12ex@C=\the\xlen{\ar[r]^-{#2}_-{#1} &}}}}}%
      {\mathrel{\xymatrix@M=.12ex@C=\the\xlen{\ar[r]^-{#2}_-{#1} &}}}%
   }
\newcommand{\xla}[2][]{%
   \ifbool{@display}%
      {\settowidth{\olen}{$\overset{#2}{\longleftarrow}$}%
       \settowidth{\ulen}{$\underset{#1}{\longleftarrow}$}%
       \settowidth{\xlen}{$\xleftarrow[#1]{#2}$}%
       \ifdimgreater{\olen}{\xlen}%
          {\underset{#1}{\overset{#2}{\longleftarrow}}}%
          {\ifdimgreater{\ulen}{\xlen}%
             {\underset{#1}{\overset{#2}{\longleftarrow}}}
             {\xleftarrow[#1]{#2}}}}%
      {\xleftarrow[#1]{#2}}
   }
\newcommand{\isoarrow}{%
   \ifbool{@display}{\overset{\sim}{\longrightarrow}}{\xrightarrow\sim}%
   }
\newcommand{\proet}{\mathrm{pro\acute{e}t}}
\newcommand{\quash}[1]{}
\DeclareSymbolFontAlphabet{\mathbb}{AMSb} 
\DeclareSymbolFontAlphabet{\mathbbl}{bbold}
\newcommand{\crys}{{\rm crys}}
\newcommand{\br}{\breve}
\newcommand{\Sht}{\mathrm{Sht}}
\newcommand{\und}{\underline}
 \newcommand{\sk}{\vskip0.02in}
\begin{document}

\title[$p$-adic Borel extension for local Shimura varieties]{$p$-adic Borel extension for local Shimura varieties}

 \author{Abhishek Oswal}
\address{Mathematisches Institut, Albert-Ludwigs-Universität Freiburg, Freiburg, Germany}
\email{abhishek.oswal@math.uni-freiburg.de}

\author{Georgios Pappas}
\address{Department  of Mathematics, Michigan State University, E. Lansing, MI 48824, USA}
\email{pappasg@msu.edu}

\begin{abstract}
We show that the moduli spaces of Scholze's $p$-adic shtukas with framing satisfy a $p$-adic rigid analytic version of Borel's extension theorem.
In particular, this holds for  local Shimura varieties,  for all local Shimura data $(G,[b],\{\mu\})$, even for exceptional groups $G$, and  extends work of  Oswal-Shankar-Zhu-Patel who proved a $p$-adic Borel extension property for Rapoport-Zink spaces. As a corollary, we deduce 
 that all these spaces satisfy a $p$-adic rigid analytic version of Brody hyperbolicity.
\end{abstract}

 \date{\today}
\maketitle

\setcounter{tocdepth}{2}
\tableofcontents

\section{Introduction}

\subsection{The general set-up}
An important result in the foundations of the classical theory of Shimura varieties is Borel's extension theorem \cite[Thm A.]{Borel} for complex analytic maps of punctured polydisks into quotients of hermitian symmetric domains by (torsion-free) arithmetic groups.  This theorem implies that the algebraic structure given on these quotients by the Baily-Borel construction,  is uniquely determined by their complex analytic structure inherited from the symmetric domain, see \cite[3.10]{Borel}. Hence, the  structure of algebraic variety on these  quotients is ``canonical". 

Recently, Oswal-Shankar-Zhu-Patel \cite{OSZP} proved 
 a $p$-adic rigid analytic version of Borel's extension theorem for the analytification of Shimura varieties of abelian type. A crucial step in the course of their proof is a similar extension theorem for Rapoport-Zink spaces, which are, very roughly speaking, certain moduli spaces of $p$-divisible groups with additional structure. 
 From the modern point of view, Rapoport-Zink spaces are regarded as examples of \textit{local Shimura varieties}. In fact, Rapoport-Viehmann \cite{RV} conjectured the existence of general local Shimura varieties, as rigid analytic varieties which should be attached to group theoretic data and which should generalize Rapoport-Zink spaces. Soon afterwards, Scholze-Weinstein \cite{SW}
constructed these rigid analytic varieties as moduli spaces of \textit{$p$-adic shtukas}. These are objects which exist in the world of perfectoid spaces and whose definition is inspired by a construction of Drinfeld in the function field case. In perfectoid geometry, shtukas can be thought of as playing the role of generalized $p$-divisible groups; the reader is referred to \cite{SchRio} for an overview of the theory.

 The main purpose of this paper is to prove a similar extension result for all local Shimura varieties; this includes local Shimura varieties for the exceptional groups of type $E_6$ and $E_7$ for which there is no interpretation as moduli of $p$-divisible groups. Our arguments and results also extend to all the moduli spaces of $p$-adic shtukas ``with one leg" considered in \cite{SW}.

\subsection{Main results}
Suppose that $(G, [b],\{\mu\})$ are \textit{local Shimura data}, $K\subset G(\BQ_p)$, (see \S\ref{LSdata}),  and let $\CM_{G,b,\mu, K}$ be the corresponding local Shimura variety over ${\rm Sp}(\br E)$, where $\br E$ denotes the completion of the maximal unramified extension of the reflex field $E$ for the coweight $\mu$. Let $F$ be a finite field extension of $\br E$ and denote by $\BB=\BB_F$ the rigid analytic closed unit disk and by $\BB^\times=\BB-\{0\}$   the punctured closed unit disk over $F$. We show:  
 
 \begin{theorem}\label{mainThmIntro}
 Every rigid analytic morphism 
 $
 a^\times: \BB^{\times s}\times\BB^t \to \CM_{G,b,\mu, K}
$
 over $\Sp(\br E)$,  extends uniquely to   
 $
 a: \BB^{s+t} \to \CM_{G,b,\mu, K}
 $
  over $\Sp(\br E)$.
\end{theorem}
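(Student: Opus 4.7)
The uniqueness of the extension is automatic since $\BB^{\times s}\times\BB^t$ is Zariski-dense in $\BB^{s+t}$ and $\CM_{G,b,\mu,K}$ is separated. For existence, the plan is to transport the problem via the Grothendieck--Messing period map, apply a rigid-analytic extension theorem (of L\"utkebohmert type) for morphisms into projective targets, and then lift the extension back along the period map using a purity statement for $p$-adic shtukas.

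Concretely, recall from \cite{SW} the \'etale period morphism
\[
\pi_{GM,K}:\CM_{G,b,\mu,K}\longrightarrow \mathrm{Gr}^{adm}_{G,\mu,\br E}\subset \mathrm{Gr}_{G,\leq\mu,\br E}=\mathcal{F}\ell_{G,\mu,\br E},
\]
whose target (since $\mu$ is minuscule) is the classical projective flag variety $\mathcal{F}\ell_{G,\mu}$. The composition $b^\times:=\pi_{GM,K}\circ a^\times$ therefore factors through a projective rigid-analytic target, so a rigid-analytic extension theorem for morphisms to projective varieties (applied after a closed embedding $\mathcal{F}\ell_{G,\mu}\hookrightarrow \BP^N_{\br E}$) produces a unique extension $b:\BB^{s+t}\to\mathcal{F}\ell_{G,\mu,F}$ of $b^\times$. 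One then verifies that $b$ factors through the admissible locus $\mathrm{Gr}^{adm}_{G,\mu}$: this follows by combining upper semi-continuity of the Newton polygon of the pullback modification of $G$-bundles on the Fargues--Fontaine curve with the Mazur inequality on $\mathcal{F}\ell_{G,\mu}$, which together force the Newton polygon to remain equal to $\nu_b$ on all of $\BB^{s+t}$, and in particular across the codimension-one locus $Z=\bigcup_i\{x_i=0\}\times\BB^t$.

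To lift $b$ to $\CM_{G,b,\mu,K}$, recall that at infinite level $\CM_{G,b,\mu,\infty}\to \mathrm{Gr}^{adm}_{G,\mu}$ is a pro-\'etale $G(\BQ_p)$-torsor, and the level-$K$ space is its quotient by $K$; a lift at level $K$ amounts to a reduction of structure of this torsor modulo $K$. The morphism $a^\times$ supplies such a reduction over $\BB^{\times s}\times\BB^t$, so it remains to extend this reduction of structure across $Z$. This is a Hartogs/purity statement for $p$-adic $G$-shtukas with framing across codimension-one subvarieties of a smooth rigid-analytic base, and is the main obstacle distinguishing the present case from \cite{OSZP}: in the Rapoport--Zink setting the analogous step reduces to classical Zariski purity for $p$-divisible groups, but for exceptional groups such as $E_6$ and $E_7$ no such interpretation is available. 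The expected resolution is an intrinsic purity theorem for shtukas, established either by combining Kedlaya's purity for $F$-isocrystals with a Tannakian description of the $G$-structure, or by embedding $G\hookrightarrow \GL_n$ via a faithful representation, extending at the $\GL_n$ level (where purity reduces to vector-bundle-with-Frobenius data), and descending the $G$-reduction across $Z$ using the already-extended period map.
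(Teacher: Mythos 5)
The central step of your argument is false. You claim that, after embedding $\mathcal{F}\ell_{G,\mu}\hookrightarrow\BP^N$, "a rigid-analytic extension theorem for morphisms to projective targets" produces an extension of $\pi_{GM}\circ a^\times$ from $\BB^{\times s}\times\BB^t$ to $\BB^{s+t}$. No such theorem exists: a rigid-analytic morphism $\BB^\times\to\BP^1$ need not extend across the origin (take any analytic function on $\BB^\times$ with an essential singularity at $0$; the non-archimedean big Picard theorem of Cherry, cited in the paper, gives a precise obstruction). L\"utkebohmert-type results handle removal of codimension $\geq 2$ loci, or bounded functions across codimension $1$, neither of which applies to a map into projective space across a divisor. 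The difficulty of extending the period map is exactly the content of the theorem, not an input one can invoke for free; if morphisms to projective targets extended across divisors, Borel's theorem itself would be trivial.

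By contrast, the paper never tries to extend the period map first. It reduces to $G=\GL_n$ via a closed embedding, passes to the underlying $\BZ_p$-local system $\BL^\times$ on $\BB^{\times s}\times\BB^t$, shows $\BL^\times$ is \emph{horizontal de Rham} (via the structure of the shtuka and the Liu--Zhu rigidity theorem), and then extends $\BL^\times$ to $\BL$ on $\BB^{s+t}$ using the \emph{logarithmic} $p$-adic Riemann--Hilbert correspondence of Diao--Lan--Liu--Zhu. Crystallinity of $\BL$ with constant isocrystal $b$ is then deduced from Shimizu's $p$-adic monodromy theorem plus the Guo--Yang pointwise crystallinity criterion (replacing the global integral-model argument of Oswal--Shankar--Zhu--Patel, which is unavailable here). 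Finally the $b$-framing is produced from the crystalline comparison and adjusted to match the given one by showing positive Banach--Colmez spaces have no nonzero sections over $\BB^{\times s}\times\BB^t$. Your final paragraph correctly identifies purity/extension of the $G$-level structure as the serious issue, but the "expected resolution" you sketch is a wish rather than an argument, and in any case moot because the earlier extension of the period map does not hold.
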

 
 This quickly implies that local Shimura varieties satisfy a certain  form of $p$-adic Brody hyperbolicity,  over discretely valued fields:

\begin{theorem}\label{Brody}
Every rigid analytic morphism  $Y^{\rm an}\to \CM_{G,b,\mu, K}$ over $\Sp(\br E)$, where $Y$ is a smooth irreducible rational curve over $F$, is constant.
\end{theorem}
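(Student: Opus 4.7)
The plan is to deduce Theorem \ref{Brody} from Theorem \ref{mainThmIntro} via a two-step procedure: first extend the given map across the finitely many missing points of the smooth compactification of $Y$, and then argue that the resulting map from the projective line is constant by invoking the de Rham period map together with rigid GAGA.

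For the extension step, any smooth irreducible rational curve $Y$ over $F$ is a dense Zariski-open subset of its smooth projective compactification $\overline{Y}$, which becomes isomorphic to $\BP^1$ after a finite extension $F'/F$; the complement $\overline{Y}\setminus Y$ is a finite set of closed points. Working over $F'$ (and descending to $F$ at the end via the uniqueness clause of Theorem \ref{mainThmIntro}), each boundary point $y$ has a rigid neighborhood in $(\overline{Y})^{\rm an}$ isomorphic to the closed unit disk $\BB$ with $y$ at the origin, so that the intersection with $Y^{\rm an}$ is the punctured disk $\BB^\times$. Applying Theorem \ref{mainThmIntro} with $s=1$ and $t=0$ extends $f$ uniquely across each such point; gluing these local extensions yields a rigid analytic morphism $\tilde f: (\BP^1_{F'})^{\rm an} \to \CM_{G,b,\mu,K}$.

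To show $\tilde f$ is constant, I would compose with the de Rham period map of Scholze--Weinstein, $\pi_{\rm dR}: \CM_{G,b,\mu,K} \to \mathrm{Fl}_{G,\mu}^{\rm adm}$, whose target sits as an open rigid analytic subspace inside the analytification of the partial flag variety $\mathrm{Fl}_{G,\mu}$ (a projective algebraic variety over $\br E$). The composition $(\BP^1_{F'})^{\rm an} \to \mathrm{Fl}_{G,\mu}^{\rm an}$ has proper source and algebraic projective target, so by rigid GAGA it arises from an algebraic morphism $\BP^1_{F'} \to \mathrm{Fl}_{G,\mu}$. Its image is a closed algebraic subvariety entirely contained in the open subset $\mathrm{Fl}_{G,\mu}^{\rm adm}$; since the admissible locus contains no positive-dimensional projective algebraic subvariety, the image must be a single point. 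Because $\pi_{\rm dR}$ has discrete fibers (at finite level $K$ it is a pro-étale $G(\BQ_p)/K$-torsor onto its image) and $(\BP^1_{F'})^{\rm an}$ is connected, $\tilde f$ is itself constant, and hence so is the original $f$.

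The main obstacle is the last input used above: verifying that the admissible locus $\mathrm{Fl}_{G,\mu}^{\rm adm}$ contains no positive-dimensional projective algebraic subvariety, in full generality for arbitrary local Shimura data $(G,[b],\{\mu\})$. This is a $p$-adic Hodge-theoretic statement; for classical groups it can be deduced from work of Hartl and Fargues--Fontaine on weakly admissible filtered isocrystals, and in general from Scholze's description of the admissible locus via $B_{\rm dR}^+$-modifications, but uniform care is required to treat exceptional $G$. An alternative line of attack would bypass the period map entirely and exploit the smooth étale-local structure of $\CM_{G,b,\mu,K}$ (étale locally a polydisk) together with the triviality of the étale fundamental group of $(\BP^1_{\overline F})^{\rm an}$ to lift $\tilde f$ to an étale chart, on which constancy follows from the maximum modulus principle.
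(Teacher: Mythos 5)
Your first step---extending the map across the finitely many punctures to get $\tilde f: (\BP^1_{F'})^{\rm an} \to \CM_{G,b,\mu,K}$, then descending to $F$ by uniqueness---is correct and matches the second proof the paper sketches in Remark~\ref{alternativeHyperbolicity}. (The paper's primary proof of Theorem~\ref{BrodySht} is different still: it does not extend to $\BP^1$ but instead combines the extension property with the non-archimedean big Picard theorem, via Proposition~\ref{borel-implies-brody-diamond-version}.)

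The second step, however, contains a genuine gap that you name but do not close. You propose to finish by arguing that the admissible locus $\mathscr{F}^{\rm adm}_{G,\mu}$ contains no positive-dimensional projective algebraic subvariety, compose with the period map, and apply GAGA. But this is exactly the assertion that period domains contain no rational curves, and it is not available in the generality you need. Moreover, stated purely geometrically (i.e.\ without using that $F$ is discretely valued), a statement of this flavor over $\BC_p$ is essentially equivalent to what Remark~\ref{variousHyperbolic} of the paper points out is open: the techniques of this paper (and, as far as the authors know, any techniques) do not establish hyperbolicity of these spaces over $\BC_p$. So the claim is not a black box you can invoke; it is at least as hard as the theorem being proved, and your acknowledgement that ``uniform care is required to treat exceptional $G$'' understates the situation.

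The paper's route to constancy of $\tilde f$ on $\BP^1$ is Theorem~\ref{constant}, whose mechanism is quite different from what you propose: one pulls back the universal pro-\'etale $\BZ_p$-local system $\BL$ to $\BP^1$, observes it is algebraic by GAGA and the rigid Riemann existence theorem, and then uses that the \emph{geometric \'etale fundamental group of $\BP^1$ is finite} (indeed trivial) to trivialize $\BL$ after a finite \'etale cover and a finite extension of the base field. With $\BL$ constant, the period map is shown to factor through a point by unwinding Proposition~\ref{propdeRham} and the construction of the $B_{\rm dR}^+$-lattices, and \'etaleness of $\pi_{\rm GM}$ then forces $\tilde f$ to be constant. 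Crucially, this argument works \emph{through} the local system rather than through a purely geometric statement about the admissible locus, and it is here that the discretely-valued hypothesis on $F$ enters (via the $p$-adic Hodge theory input in Proposition~\ref{propdeRham} and Theorem~\ref{constant}). Your brief second suggestion (lifting $\tilde f$ to an \'etale chart and using the maximum modulus principle) runs into the problem that the charts only cover pieces of the target, and lifting a map from $\BP^1$ to a chart is not possible without first controlling the monodromy; once you control it via $\pi_1^{\rm \acute{e}t}(\BP^1_{\bar F})$ you are essentially redoing Theorem~\ref{constant}. In short: the extension step is fine, but the constancy step needs to be rerouted through the fundamental-group/local-system argument, not an unproved geometric hyperbolicity of the period domain.
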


In fact, we prove that,  for all \textit{local shtuka data}
$(G, [b],\{\mu\})$, $K\subset G(\BQ_p)$, and corresponding moduli spaces of shtukas $\Sht_{(G,b,\mu, K)}$, all morphisms  
$
\BB^{\times s}\times\BB^t \to  \Sht_{(G,b,\mu, K)}
$
extend to 
$
\BB^{s+t}\, \to  \Sht_{(G,b,\mu, K)}.
$
It follows that the moduli spaces $\Sht_{(G,b,\mu, K)}$ also satisfy the same form of $p$-adic Brody hyperbolicity, as above.
We also address the question of extending morphisms   $\BB^\times\times  X\to \Sht_{(G,b,\mu, K)}$, where $X$ is a general smooth rigid analytic variety over $F$, to $\BB\times X$; see Corollary \ref{mainThm0}  in the text. 

In general, when the coweight $\mu$ is not minuscule, the moduli spaces $\Sht_{(G,b,\mu, K)}$ are not rigid analytic varieties but only \textit{diamonds}, 
as defined in \cite{SchDiam}, \cite{SW}. 
In this case, the morphisms from $\BB^{\times s}\times\BB^t $ and $\BB^{s+t}$ as above make sense after applying the full embedding of the category of smooth rigid analytic  varieties over $\Sp(\br E)$ to the category of diamonds over $\Spd(\br E)$ given by the diamond functor $(\ )^\diam$. For example, $p$-adic Brody hyperbolicity is meant in the sense that
every  morphism  $Y^\diam=(Y^{\rm an})^\diam\to \Sht_{(G,b,\mu, K)}$ of diamonds over $\Spd(\br E)$, where $Y$ is a smooth irreducible rational curve over $F$, is constant, i.e. factors through a point $\Spd(F)\to \Sht_{(G,b,\mu, K)}$.
Following the same line of ideas, we also show:

\begin{theorem}\label{constantIntro}
Every morphism $S^\diam \to \Sht_{(G,b,\mu, K)}$ over $\br E$, where $S$ is a smooth irreducible quasi-projective variety over $F$ with {\sl finite} geometric \'etale fundamental group, is constant.
 \end{theorem}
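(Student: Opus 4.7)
The plan has three main steps: (i) extend $f$ to a smooth projective compactification $\bar S$ of $S$ by invoking the shtuka version of Theorem \ref{mainThmIntro}; (ii) pass to a finite \'etale cover to reduce to the case where $\bar S$ is geometrically simply connected; and (iii) lift to the infinite-level shtuka space and combine the Hodge--Tate period morphism with Theorem \ref{Brody} to conclude constancy.

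\emph{Steps (i) and (ii).} Take a smooth projective compactification $\bar S \supset S$ over $F$ with $D = \bar S \setminus S$ a strict normal crossings divisor. Near a boundary point where $r$ components of $D$ meet, $\bar S^{\rm an}$ is \'etale-locally a polydisk $\BB^{s+r}$ in which $S$ corresponds to $\BB^s \times \BB^{\times r}$; the shtuka analog of Theorem \ref{mainThmIntro}, stated in the introduction right after Theorem \ref{Brody}, extends $f$ across each such point, and the uniqueness assertion lets these local extensions glue to a morphism $\bar f \colon \bar S^\diam \to \Sht_{(G,b,\mu,K)}$. Since $\pi_1^{\et}(S_{\bar F})$ is finite, there is a finite \'etale Galois cover $\tilde S \to S$ with trivial geometric fundamental group; normalizing $\bar S$ in the function field of $\tilde S$ and resolving boundary singularities produces a smooth projective compactification $\tilde{\bar S} \supset \tilde S$ with $\pi_1^{\et}(\tilde{\bar S}_{\bar F}) = 1$, since the open immersion $\tilde S \hookrightarrow \tilde{\bar S}$ induces a surjection on $\pi_1^{\et}$. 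As $\tilde S \to S$ is surjective, $f$ is constant if and only if its pullback to $\tilde S$ is, so we may replace $\bar S$ by $\tilde{\bar S}$ and assume $\bar S$ is smooth projective and geometrically simply connected.

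\emph{Step (iii).} The tower $\{\Sht_{(G,b,\mu, K')}\}_{K' \subset K}$ over $\Sht_{(G,b,\mu, K)}$, indexed by open subgroups $K' \subset K$, is a pro-(finite \'etale) tower. Since $\pi_1^{\et}(\bar S_{\bar F}) = 1$, $\bar f$ lifts uniquely through this tower to the infinite-level Scholze--Weinstein space $\bar f_\infty \colon \bar S^\diam \to \Sht_{(G,b,\mu, \infty)}$. Post-composing with the Hodge--Tate period morphism $\pi_{HT} \colon \Sht_{(G,b,\mu, \infty)} \to \CF\ell_{G,\mu^{-1}}^\diam$, and then using full faithfulness of the diamond functor on smooth rigid analytic varieties together with rigid GAGA (as $\bar S$ is projective), one obtains an algebraic morphism $\bar g \colon \bar S \to \CF\ell_{G,\mu^{-1}}$.

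\emph{Main obstacle.} The subtle final step is to deduce constancy of $\bar f$ from the control one gains over $\bar g$. Theorem \ref{Brody} implies that $\bar f$, and hence $\bar g$, contracts every rational curve in $\bar S$, which already forces constancy whenever $\bar S$ is uniruled. In general one must combine this with the structure of the fibers of $\pi_{HT}$, which are essentially $G(\BQ_p)$-orbits and are discrete from the rigid analytic viewpoint, to argue that the connected simply connected $\bar S^\diam$ lands in a single such orbit, forcing $\bar f_\infty$ and therefore $\bar f$ to be constant. Making this rigidity precise in the non-uniruled case (e.g.\ when $\bar S$ is a K3 surface) -- that is, showing that a $p$-adic shtuka on a smooth proper geometrically simply connected base must be rigid -- is the technically most delicate part of the argument.
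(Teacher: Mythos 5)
Your proposal takes a genuinely different route from the paper, but it has a real gap that you yourself flag in the ``Main obstacle'' paragraph, and that gap is exactly the heart of the matter.

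The paper's proof of Theorem \ref{constantIntro} deliberately avoids the extension theorem altogether (this is stated explicitly in the introduction). It reduces to $G=\GL_n$, observes via GAGA and the rigid Riemann existence theorem that the pulled-back $\BZ_p$-local system $\BL$ on $S^{\rm an}$ is algebraic, and then uses the finiteness of $\pi_1^{\et}(S_{\bar F})$ to pass (after Lemma \ref{con1}) to the case where $\BL \cong g^*\BL_0$ for $\BL_0 = s^*\BL$ a local system on a point. Since $\BL$ is horizontal de Rham (Proposition \ref{propdeRham}), the framings of the shtuka give trivializations of $D_{\rm dR}(\BL)$ and $D_{\rm dR}(\BL_0)$ that differ by a constant $A \in \GL_n(F)$, from which one reads off that the \emph{Grothendieck--Messing} period map $\pi_{\rm GM}\circ f$ factors through a single point of ${\rm Gr}_{\GL_n,\leq\mu}$. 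The punch line is then the \emph{\'etaleness} of $\pi_{\rm GM}$: an \'etale map with connected source landing in a point must itself be constant. That is what makes the argument close.

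Your approach replaces this with: compactify to $\bar S$ via the extension theorem, kill $\pi_1^{\et}$ by a finite cover, lift to infinite level, compose with $\pi_{HT}$, and try to conclude from Theorem \ref{Brody} and the structure of the fibers of $\pi_{HT}$. Several issues arise. First, $\pi_{HT}$ is not \'etale; its fibers on the infinite-level space are large (quotient / torsor type objects, not discrete), so ``discrete from the rigid analytic viewpoint'' is not correct, and constancy of $\bar g$ does not directly give constancy of $\bar f_\infty$. Second, and more decisively, ``$\bar g$ contracts all rational curves'' does not force $\bar g$ to be constant outside the uniruled case, and you acknowledge that the non-uniruled case (K3s, etc.) is unresolved. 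But that residual problem --- showing a shtuka on a smooth proper geometrically simply connected base is rigid --- is precisely the content of the theorem; you have not reduced it to something previously proven. The paper closes this gap by switching to $\pi_{\rm GM}$ (which is \'etale) and by exploiting horizontal de Rham-ness of the local system, which your outline never invokes. You should also note that passing to $\tilde{\bar S}$ with trivial geometric $\pi_1^{\et}$ may require enlarging the base field, so the descent steps of Lemma~\ref{con1} are needed; the paper handles this carefully.

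So: your Steps (i) and (ii) are essentially correct (though unnecessary), Step (iii)'s algebraicity of $\bar g$ via full faithfulness and GAGA is fine, but the final deduction is a genuine hole, not merely a delicate technicality. Consult the paper's use of the Grothendieck--Messing period map and Proposition \ref{propdeRham}(ii) to see how to close it without compactifying at all.
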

 
The proof of this is simpler, and does not use the $p$-adic Borel extension theorem. The analogous statement for quotients of symmetric hermitian domains over the complex numbers is standard and, in many cases, can be derived as a consequence of the ``Theorem of the Fixed Part", see \cite[Thm. 3.1]{Vo}, \cite{DeligneHodgeII}.

 \subsection{Discussion of the proofs}   The proof of the main extension result Theorem \ref{mainThmIntro} follows a similar 
general strategy  
as the proof of the extension result of   \cite{OSZP} for Rapoport-Zink spaces but there are some crucial technical differences.
The argument here is streamlined and extended so it applies to general (non-minuscule) shtukas. There is no need to use,  or even mention, $p$-divisible groups and we can sidestep the reasoning regarding the period map
used in \cite[\S 5]{OSZP}. 
In fact, there is a significant difference when $s>1$ or $t>0$, which occurs even when $\mu$ is minuscule: The proof in \cite{OSZP}  uses an integral model of a global Shimura variety and the $p$-adic uniformization map in that case; such an integral model is, in general, unavailable. Here, this step is replaced by the use of a recent result of Guo-Yang \cite{GY}, as we explain below.
 
 The argument goes as follows: The diamond $\Sht_{(G,b,\mu, K)}$ is a moduli of shtukas ``with one leg bounded by $\mu$ and a framing by $b$" and we need to extend both the shtuka and its framing over the boundary. We first   reduce to the case that $G=\GL_n$ and $K=\GL_n(\BZ_p)$; this reduction is straightforward but crucially uses that we can allow $\mu$ to be non-minuscule. We then use that the shtuka over $\BB^{\times s}\times\BB^t $ corresponds to a de Rham \'etale local system. This local system is shown to be ``horizontally de Rham", in the sense that the vector bundle with connection associated to it by the $p$-adic Riemann-Hilbert correspondence (\cite{LiuZhu}, \cite{ScholzeHodge}) is constant. As in \cite{OSZP}, we now first use the logarithmic $p$-adic Riemann-Hilbert correspondence of Diao-Lan-Liu-Zhu  \cite{DLLZ} to extend the local system, and hence the shtuka, over the boundary to $\BB^{s+t}$. Next, we apply a result of Shimizu \cite{Sh} to deduce that the extended local system is crystalline at all classical points of $\BB^{s+t}$; moreover, at each classical point the corresponding Frobenius isocrystal is  given by $b$. The ``pointwise criterion  for crystallinity'' of \cite{GY} now implies that the local system is crystalline over $\BB^{s+t}$, in the sense of Faltings. Using this, we deduce that  the corresponding shtuka has constant Frobenius isocrystal given by $b$ and it admits a framing by $b$ over $\BB^{s+t}$. It remains to see that the framing of the shtuka over $\BB^{s+t}$ can be adjusted so that it extends the given framing over $\BB^{\times s}\times\BB^t$. This is shown by first proving that    positive Banach-Colmez spaces have only trivial sections over the rigid analytic variety $\BB^{\times s}\times\BB^t$. 
 
We give two proofs of $p$-adic Brody hyperbolicity, both using the extension theorem: For the first proof, we show that the big Picard theorem of $p$-adic analysis \cite{cherry} implies, roughly, that any rigid analytic variety (or, more generally, separated locally spatial diamond) which satisfies the extension property is hyperbolic in the above sense. For the second proof, 
we observe that the morphism with source the rational curve extends to $\BP^1$ and then apply Theorem \ref{constantIntro} to deduce that it has to be constant. 

 We conclude by remarking that these methods use crucially the assumption that the field $F$ is \emph{discretely valued}. For example, we do not know if all morphisms defined over $\BC_p$ from rational curves to local Shimura varieties have to be constant, see Remark \ref{variousHyperbolic}. 
 
 \subsection{Organization} In \S \ref{s:pre} we fix some notation and recall the definitions and basic properties of moduli of shtuka and the construction of various $p$-adic period rings for higher dimensional bases. We conclude \S \ref{s:pre} with Proposition \ref{propdeRham} which describes a relation between framed shtuka and \'etale local systems over rigid analytic varieties. The main results are stated and proven in \S \ref{s:thms}. We conclude  in \S \ref{s:questions} with some questions.

\medskip

\noindent \textit{Acknowledgements:} We are grateful to the referee for their careful reading of the manuscript and for several useful suggestions. G.P. is partially supported by NSF grant \#DMS-2100743. 

\section{Definitions and preliminaries}\label{s:pre}

\subsection{General assumptions and notation}

We use $F$ to denote a complete discrete valued field with valuation ring $F^\circ=\CO_F$. We will assume that $F$ is a finite extension of $\BQ_p$ or of $\br\BQ_p=W(k)[1/p]$, where $k$ is an algebraic closure of ${\mathbb F}_p$. We will choose an algebraic closure $\overline F$ of $F$ and 
write $\br F$ for the completion of the maximal unramified extension of $F$ in $\overline F$. We denote the completion of $\overline F$
by $C=\widehat{\overline F}$ or $\BC_p$.

Throughout this paper, we use the language of adic spaces, perfectoids, $v$-sheaves, diamonds, etc., as in \cite{SW}, \cite{SchDiam}, \cite{FS}. The reader is referred to these sources for the notations and details  of the constructions. 

We regard rigid analytic varieties and spaces as adic spaces.  We will freely use the equivalence of categories between quasi-separated rigid analytic spaces over $F$ in the sense of Tate and quasi-separated adic spaces locally of finite type over $\Spa(F, \CO_F)$ (\cite[Prop. 4.5]{Huber}). Given a rigid analytic space $\Sp(R)$ (in the sense of Tate) associated to an affinoid $F$-algebra $R$, the corresponding adic space  is  $\Spa(R,R^+)$ 
 with $R^+=R^\circ$ the subring  of power-bounded elements of $R$. 
 
 For an algebraic variety $S$ over $F$, we set $S^{\rm an}$ for its {\sl analytification}, i.e. the corresponding rigid analytic variety over $\Sp(F)$. We set $S^\diam$ for the diamond over $\Spd(F)$ obtained by applying the diamond functor \cite[Lect. 10]{SW} to the analytification $S^{\rm an}$ of $S$, i.e. $S^\diam:=(S^{\rm an})^\diam$.  

We denote by $F\langle t_1,\ldots , t_m\rangle$ the Tate algebra over $F$ in the variables $t_1,\ldots, t_m$. We let $\BB_F=\Spa(F\langle t \rangle, \CO_F \langle t \rangle)$ denote the closed unit disk over $F,$ and $\BB^\times_F$ denote the punctured closed unit disk\footnote{In \cite{OSZP}, these are denoted by $\BD_F$ and $\BD^\times_F$, respectively.}  over $F$:  If $\BA_n= \Spa(F\langle t, p^n/t \rangle, \CO_F\langle t, p^n/t \rangle)$ is the affinoid annulus with inner radius $|p|^n$ and outer radius $1$, then $\BB^\times_F=\bigcup_{n\geq 1}  \BA_n $.
We also have $\BB^m_F=\Spa(F\langle t_1,\ldots , t_m\rangle, \CO_F\langle t_1,\ldots , t_m\rangle)$. 

If $H$ is a locally profinite group,  we denote by $\underline H$  the pro-\'etale sheaf of groups on perfectoid spaces given by $\underline{H}(S)=C^0(|S|, H)$, where $C^0$ stands for continuous functions. 

We denote the Frobenius by $\phi$ or ${\rm Frob}$. In fact, we warn the reader that we will often abuse notation and use the same letter $\phi$ to denote different maps induced by Frobenius; hopefully, this will not cause too much confusion.

\subsection{Moduli of shtukas and local Shimura varieties}

Here we introduce the moduli spaces of $p$-adic shtukas, and explain some of their properties, following \cite{SW}.

\subsubsection{}\label{LSdata}
By definition, \emph{``local shtuka data''} are triples $(G, [b], \{\mu\})$ (\cite[\S 24.1]{SW}) where,  
\begin{itemize}
\item{} $G$ is a connected reductive group over $\BQ_p$,
\item{} $[b]$ is a $\phi$-conjugacy class of an element $b\in G(\br\BQ_p)$,
\item{} $\{\mu\}$ is a $G(\bar\BQ_p)$-conjugacy class of a cocharacter $\mu: {\BG_m}_{\bar\BQ_p}\to G_{\bar\BQ_p}$; \end{itemize}

We also assume the condition $b\in B(G, \mu^{-1})$ (see \cite[Def. 24.1.1]{SW}). 
(If this condition is not satisfied the corresponding local shtuka moduli spaces are empty.)
For simplicity, we will often write $(G, b,\mu)$ instead of   $(G, [b], \{\mu\})$.
As usual, we let $E\subset \bar\BQ_p$ be the ``reflex field'' of $(G,\mu,b)$. This is the field of definition of the $G(\bar\BQ_p)$-conjugacy class
$\{\mu\}$. We also set $J_b(\BQ_p):=\{g\in G(\br\BQ_p)\ |\ gb=b\phi(g)\}$; these are the points of a reductive group $J_b$ over $\BQ_p$.

If, in addition, $\mu$ is \emph{minuscule} (i.e. for any root $\alpha$ of $G_{\bar\BQ_p}$, we have $\langle \alpha,\mu\rangle\in \{-1,0,1\}$),
then we say that $(G, [b], \{\mu\})$ are \emph{``local Shimura data''}. (See \cite[\S 5]{RV}.)
 
 \subsubsection{}\label{YandFF}
  
 Let $S=\Spa(R, R^+)$ be an affinoid perfectoid adic space with $R^+$ a $k=k(\br E)$-algebra. Let $\varpi\in R^+$ be a pseudo-uniformizer of $R$,  with Teichm\"uller lift $[\varpi]$ in the ring of Witt vectors $W(R^+)$. Consider   $W(R^+)$ with its $([\varpi], p)$-adic topology and
set
 \[
 \CY_{[0,\infty)}(S):=\Spa(W(R^+), W(R^+))-\{[\varpi]=0\}
 \]
which is an analytic adic space independent of the choice of $\varpi$. We will denote by $\phi$ the isomorphism of $\CY_{[0,\infty)}(S)$ obtained from the Frobenius of  $W(R^+)$.
For a rational number $r\geq 0$, we set
\[
\CY_{[r,\infty)}(S)=\{ |[\varpi]|\leq |p|^r, [\varpi]\neq 0\} \subset  \CY_{[0,\infty)}(S). 
\]
Then $\phi$ maps $\CY_{[r,\infty)}(S)$ to $\CY_{[pr,\infty)}(S)$.
For future reference, we note that we have a natural ring homomorphism
\begin{equation}\label{beta1}
W(R^+)\Big\langle \frac{[\varpi]}{p}\Big\rangle [p^{-1}]\to \Gamma(\CY_{[1,\infty)}(S), \CO) 
\end{equation}
where $W(R^+)\langle[\varpi]/p \rangle$ is the $p$-adic completion of  $W(R^+)[[\varpi]/p]$.

It is useful to also recall the definition of the (adic, relative) Fargues-Fontaine curve as the quotient
\[
X_{FF, S}=\CY_{(0,\infty )}(S)/\phi^\BZ
\]
(\cite[Def. 15.2.6]{SW}, \cite[Def. II.1.15]{FS}). This quotient  is an adic space over $\Spa(W(k)[1/p])$. 

Let $S^\sharp=\Spa(R^\sharp, R^{\sharp, +})\to \Spa(\br E, \CO_{\br E})$ be an untilt of $S$ over $\br E$ which gives a point of the $v$-sheaf $\Spd(\br E)$. The standard canonical surjective ring homomorphism $\theta: W(R^+)\to R^{\sharp+}$ produces   Cartier divisors
$S^\sharp\hookrightarrow \CY(S) $ and also 
$S^\sharp\hookrightarrow X_{FF, S}$. 

 \subsubsection{}\label{ModuliShtuka}
Now choose an open compact subgroup $K\subset G(\BQ_p)$ such that there is a smooth 
 connected affine model $\CG$ over $\BZ_p$ for $G$ with $K=\CG(\BZ_p)$.
 
Let us briefly recall the definition of the moduli space of shtukas $ \Sht_{(G,b,\mu, K)}$ over $\Spd(\br E)$ from \cite[Def. 23.1.1]{SW}. This is the moduli of $\CG$-shtuka ``with one leg bounded by $\mu$ and  framed by $b$''. 
Precisely,
\[
 \Sht_{(G,b,\mu, K)}\to \Spd(\br E)
 \]
  is the presheaf on perfectoid spaces over $k=k(\br E)$ sending the affinoid perfectoid $S=\Spa(R, R^+)$ over  $k$ to the set of isomorphism classes of quadruples $(\CP, S^\sharp, \phi_\CP, \iota_r)$, where
  \begin{itemize}
  \item $\CP$ is a $\CG$-torsor over $\CY_{[0,\infty)}(S)$,
  
  \item $S^\sharp=\Spa(R^\sharp, R^{\sharp, +})\to \Spa(\br E, \CO_{\br E})$ is an untilt of $S$ over $\br E$ so that $(S, S^\sharp)$ is a point of the $v$-sheaf $\Spd(\br E)$,
  
  \item $\phi_\CP$ is a $\CG$-torsor isomorphism 
  \[
  \phi_\CP: (\phi^*\CP)_{|\CY_{[0,\infty)}(S)-S^\sharp}\xrightarrow{\ \sim\ } \CP_{|\CY_{[0,\infty)}(S)-S^\sharp},
  \]
 
  \item $\iota_r$ is an isomorphism of $G$-torsors
  \[
  \iota_r: \CP_{|\CY_{[r,\infty)}(S)}\xrightarrow{\ \sim\ } G\times \CY_{[r,\infty)}(S)
  \]
  for $r\gg 0$, under which $\phi_\CP$ is identified with $b\times \phi$.
   \end{itemize}
   
   We require that $\phi_\CP$ is meromorphic along the Cartier divisor $S^\sharp$ with pole bounded by $\mu$
   at all geometric rank $1$ points. Also we identify $\iota_{r'}$ with $\iota_r$ for $r'>r$, when these are compatible, i.e. when ${\iota_{r}}_{|\CY_{[r',\infty)(S)}}=\iota_{r'}$.
   We will call $\iota_r$ a ``framing" of the shtuka $\CP$.

From  \cite[Prop. 19.5.3]{SW}, it follows that $ \Sht_{(G,b,\mu, K)}$ is a $v$-sheaf over $\Spd(\br E)$. The isomorphism class of  $\Sht_{(G,b,\mu, K)}$ over $\Spd(\br E)$ only depends on the $\phi$-conjugacy class $[b]$. The group $J_b(\BQ_p)$ acts on $\Sht_{(G,b,\mu, K)}$ by composing $\iota_r$ with right multiplication by $j^{-1}\in J_b(\BQ_p)\subset G(\br\BQ_p)=G(W(k)[1/p])$.

Note that, although the definition uses  the group scheme $\CG$, the notation only depends on $K=\CG(\BZ_p)$. This is justified, since as we shall see below, 
 $\Sht_{(G,b,\mu, K)}$  only depends on $K=\CG(\BZ_p)$, and not on the particular choice of  $\CG$.
 
 By the  standard equivalence of categories between vector bundles of rank $n$ and $\GL_n$-torsors (see \cite[Thm. 19.5.2]{SW} for the adic space version),
when $\CG=\GL_n$ over $\BZ_p$, $ \Sht_{(G,b,\mu, K)}$ can be described as the moduli of quadruples $(\sV, S^\sharp, \phi_\sV, \iota_r)$, where $\sV$ is vector bundle of rank $n$ over $\CY_{[0,\infty)}(S)$, and the rest are corresponding data as above. To make the disctinction, we will sometimes call $(\sV,\phi_\sV)$   a \emph{vector space} shtuka.

  \subsubsection{}\label{shtukaViaFF} By \cite[Prop. 23.3.1]{SW}, see also \cite[Prop. 4.6.1]{PRlocal}, the $S=\Spa(R, R^+)$-points of $\Sht_{(G, b,\mu, K)} $ are in bijection to isomorphism classes of quadruples 
 \[
 (S^\sharp, \CE, f, \BP)
 \]
 consisting of
 
 \begin{itemize}

\item an untilt $S^\sharp$ of $S$ over $\br E$,
 
 \item 
 a $G$-torsor $\CE$ over the relative Fargues-Fontaine curve $X_{FF, S}$,
 
 \item a pro-\'etale $\underline{\CG(\BZ_p)}$-torsor $\BP$ over $S$, 
 
 \item 
 a meromorphic isomorphism with pole bounded by $\mu$
 \[
 f: \CE^b _{|X_{FF, S}-S^\sharp}\xrightarrow{\sim } \CE _{|X_{FF, S}-S^\sharp},
 \]
 \end{itemize}
such that $\CE$ is trivial over each geometric point of $S$, and such that the $\underline{\CG(\BZ_p)}$-torsor $\BP$ pushes out to the pro-etale $\underline{G(\BQ_p)}$-torsor
over $S$ that corresponds to $\CE$ (see  \cite[Thm. 22.5.2]{SW}). 

Here, $\CE^b$ is the $G$-torsor over the relative Fargues-Fontaine curve $X_{FF, S}$ obtained from $b$, see \cite[\S 22.4]{SW}.  In fact, $\CE^b$ is ``constant'', i.e. it is obtained by pullback from the absolute base. 

When $\CG=(\GL_n)_{\BZ_p}$, there is, as above, a corresponding ``vector space description'' 
in terms of quadruples $(S^\sharp, \CV, f, \BL)$, with $\CV$ a vector bundle of rank $n$ over $X_{FF,S}$ and $\BL$ a pro-\'etale $\und{\BZ_p}$-local system over $S$.

\subsubsection{} We now recall some additional properties.
As explained in \cite[Prop. 23.3.3, \S 24.1]{SW}, there is an \'etale (\emph{``Grothendieck-Messing"}) \emph{period morphism}
\[
\pi_{\rm GM}: \Sht_{(G,b,\mu, K)}\to {\rm Gr}_{G, \Spd(\br E),\leq \mu} 
\]
of $v$-sheaves over $\Spd(\br E)$. Here, ${\rm Gr}_{G, \Spd(\br E),\leq \mu}$ is the closed $v$-subsheaf of the $B_{\rm dR}$-affine Grassmannian for $G$, see \cite[Lect. 19-20]{SW}. By the above, $\Sht_{(G,b,\mu, K)}$ only depends on $K=\CG(\BZ_p)$, and not on the particular choice of smooth model $\CG$,  see \cite[\S 23.3]{SW}. By \cite[Thm. 23.1.4]{SW}, $\Sht_{(G,b,\mu, K)}$ is a locally spatial diamond.  As in \cite[Prop. 2.25]{Gleason}, we can see that $\Sht_{(G,b,\mu, K)}\to {\rm Spd}(\br E)$ is separated, i.e. the diagonal
\[
\Sht_{(G,b,\mu, K)}\to \Sht_{(G,b,\mu, K)}\times_{{\rm Spd}(\br E)} \Sht_{(G,b,\mu, K)}
\]
is a closed immersion of $v$-sheaves.

For simplicity, when the data $(G,b,\mu)$ are fixed, we will often write $\Sht_K$ instead of $\Sht_{ G,b,\mu, K}$.  For a compact open subgroup $K'\subset K$ we have 
 \[
 \Sht_{K'}\to \Sht_K
 \]
 which is a finite \'etale morphism of diamonds over $\br E$.  Taking an inverse limit gives 
 \[
 \Sht_\infty:=\varprojlim_{K'\subset K}\Sht_{K'}\to \Sht_K.
 \]
This is a pro-\'etale cover with group $K=\CG(\BZ_p)$ and gives a pro-\'etale $\underline{\CG(\BZ_p)}$-torsor $\BP^{\rm univ}$ over $\Sht_K$. This torsor specializes,
at each $S$-point as given above, to the torsor $\BP$ in  the quadruple $(S^\sharp, \CE, f, \BP)$.
Pushing out via $\underline{\CG(\BZ_p)}\subset \underline{G(\BQ_p)}$ gives a pro-\'etale $\underline{G(\BQ_p)}$-torsor $\BP^{\rm univ}_\eta:=\underline{\CG(\BQ_p)}\times_{\underline{\CG(\BZ_p)}}\BP^{\rm univ}$ over $\Sht_K$.

\subsubsection{}  Assume now that $\mu$ is minuscule, i.e. that $(G, [b], \{\mu\})$ are local Shimura data. 

Let $\mathscr{F}_{G, \mu}=G/P_\mu$ be the smooth projective $E$-variety parametrizing the parabolics of $G$ of type  $\mu$; here
\[
P_\mu=\{g\in G \ |\ \lim_{t\to\infty} \mu(t)g\mu(t)^{-1}\ {\rm exists}\}.
\]
Denote by $\br{\mathscr{F}}_{G, \mu}$ its base change to $\br E$. 
Then there is a Bialynicki-Birula style isomorphism of $v$-sheaves over $\Spd(\br E)$
\[
{\rm Gr}_{G, \Spd(\br E),\leq\mu}={\rm Gr}_{G, \Spd(\br E), \mu}\xrightarrow{ \sim\ } \br{\mathscr{F}}_{G, \mu}^\diam.
\]
(See \cite[Prop. 19.4.2]{SW}.) 

Using this together with the \'etaleness of $\pi_{\rm GM}$, Scholze-Weinstein show that  $ \Sht_{(G,b,\mu, K)}$
is  represented by a smooth analytic (adic) space $\CM_{G,b,\mu, K}$ over $\br E$, 
i.e. we have
 \[
 \Sht_{(G,b,\mu, K)}=\CM_{ G,b,\mu, K }^\diam.
 \]
 The adic space $\CM_{G,b,\mu, K}$ is quasi-separated and locally of finite type over $\Spa(\br E)$ and so it can be considered as a rigid analytic variety
 over $\Sp(\br E)$.  
 By definition (\cite[Def. 24.1.3]{SW}), the rigid analytic  variety $\CM_{G,b,\mu, K}$ over $\Sp(\br E)$,
  is the \emph{local Shimura variety} attached to the data $(G, [b], \{\mu\} )$ and $K\subset G(\BQ_p)$.
  When $G=\GL_n$ (and $\mu$ is minuscule), $\CM_{G,b,\mu, K}$ is a \emph{Rapoport-Zink space}, i.e. it is isomorphic to the rigid generic fiber 
  of a Rapoport-Zink formal scheme, see \cite[Thm. 24.2.5]{SW}.

\subsection{$p$-adic period rings}
 
Here we quickly review the, more or less standard, construction of certain versions of Fontaine's $p$-adic period rings for higher dimensional bases. These  appear in several places and, in particular, in \cite{KL}, \cite{Br},  \cite[\S 2, \S 3]{Sh}, and also relate to the constructions in \cite[\S 6]{ScholzeHodge} and \S \ref{YandFF}. For further details, we refer the reader to these articles.

\subsubsection{}\label{sss:covers} Let $R$ be a normal, Noetherian, and flat $\CO_F$-algebra which is  complete for the $p$-adic topology.
 Fix an algebraic closure $\overline{{\rm Frac}(R)}$ of ${\rm Frac}(R)$ containing $\overline F$. Let $\overline R$ be the union of all finite $R$-subalgebras $R'$ of $\overline{{\rm Frac}(R)}$ with the property that $R'[p^{-1}]$
is \'etale over $R[1/p]$ and note that $\overline R$ is a normal domain. Then $\overline R[1/p]$ is the union of all finite \'etale extensions of $R_F=R[1/p]$
in $\overline{{\rm Frac}(R)}$ and we set
\[
\CG_{R}:={\rm Gal}(\overline R[1/p]/R_F).
\]
Finally, we let $\widehat{\overline R}$ be the $p$-adic completion of $\overline R$; then $ \widehat{\overline R}$ is a $p$-torsion free $\CO_C$-algebra, $\overline R\hookrightarrow \widehat{\overline R}$ and $p\widehat{\overline R}\cap \overline R=p\overline R$, see \cite[Prop. 2.0.3]{Br}. The action of $\CG_{R}$ on $\overline R$ extends continuously to $\widehat{\overline R}$. Then $(\widehat{\overline R}[p^{-1}], \widehat{\overline R})$ is a perfectoid Huber pair over $(C,\CO_C)$, see \cite[Lem. 3.7]{Br}, \cite[Lem. 10.1.6]{SW}.\footnote{In \cite{Br, Sh}, $R$ is assumed to satisfy some additional properties, but these are not needed for the statements we consider here.} For brevity, we set 
\[
(A,A^+):=(\widehat{\overline R}[p^{-1}], \widehat{\overline R}).
\]
By construction, the perfectoid affinoid $\Spa(A, A^+)$ is the completion of a pro-\'etale cover of $\Spa(R[p^{-1}], R)$ and 
\begin{equation}\label{Cover}
\Spd(R[p^{-1}], R)=\Spd(A, A^+)/\underline{\CG_{R}},
\end{equation}
 by \cite[Lem. 10.1.7]{SW}.
 
 \subsubsection{}\label{periodrings}
 Let $\BA_{\rm max}(A, A^+)$ be the $p$-adic completion of 
\[
 W(A^{ \flat +})[ ({\rm ker}(\theta))/p]\subset W(A^{ \flat +})[p^{-1}],
\]
 where $(A^\flat, A^{ \flat +})$ is the tilt of $(A, A^+)$ and $\theta: W(A^{ \flat +})\to A^{+}$ is the standard homomorphism. We define
 \[
  \BB^+_{\rm max}(A, A^+):=\BA_{\rm max}(A, A^+)[p^{-1}].
 \]
The homomorphism $\theta$ extends to give $\theta_{\rm max}: \BA_{\rm max}(A, A^+)\to A=A^+[p^{-1}]$. 

We also define
\[
\BB^+_{\rm dR}(A, A^+):=\varprojlim_n W(A^{ \flat +})[p^{-1}]/ ({\rm ker}(\theta))^n.
\]
Choose a compatible system $(\zeta_{p^m})_{m\geq 1}$ of primitive $p^m$-power roots of unity in $\overline F\subset C$,
i.e. with $\zeta_{p^{m+1}}^p=\zeta_{p^m}$, for all $m\geq 1$. Set
\[
\epsilon:=(1, \zeta_p, \zeta_{p^2}, \ldots )\, {\rm mod}\, p\in \CO_{ C^\flat}\subset A^{ \flat +},
\]
and let $[\epsilon]\in W(A^{ \flat +})$ be the Teichm\"uller lift of $\epsilon$. Now set
\[
t=\sum^{+\infty}_{m=1}\frac{(-1)^{m-1}}{m}([\epsilon]-1)^m  \in \BB^+_{\rm dR}(A, A^+).
\]
The element $t$ generates the kernel of the canonical homomorphism
\[
\theta_{\rm dR}: \BB^+_{\rm dR}(A, A^+)\to A,
\]
 cf.
\cite[Prop. 5.1.3]{Br}. Set 
\[
\BB_{\rm dR}(A, A^+):=\BB^+_{\rm dR}(A, A^+)[t^{-1}].
\]
As in \cite[Lem. 2.17]{Sh}, $t$ also converges to give an element of $ \BA_{\rm max}(A, A^+)$, and we have
$\theta_{\rm max}(t)=0$, and $\phi(t)=pt$, where $\phi$ is the Frobenius. Note that the ideal $(t)$ of $\BB^+_{\rm max}(A, A^+)$
is $\phi$-stable.
We also set
\[
\BB_{\rm max}(A, A^+):=\BB^+_{\rm max}(A, A^+)[t^{-1}].
\]

We will also use $\BA_{\crys}(A, A^+)$, which is defined as the $p$-adic completion of the divided power envelope of  $W(A^{ \flat +})$ relative to the
 ideal  ${\rm ker}(\theta)$ compatible with the canonical divided power structure on $pW(A^{ \flat +})$. Define
 \[
 \BB^+_{\crys}(A, A^+):=\BA_{\crys}(A, A^+)[p^{-1}], \quad \BB_{\crys}(A, A^+):= \BB^+_{\crys}(A, A^+)[t^{-1}].
 \]
 The actions of Frobenius $\phi$ and of the Galois group $\CG_{R}$ on $W(A^{\flat+})$ commute and naturally extend to  $\BA_{\rm max}(A, A^+)$, $\BA_{\rm cris}(A, A^+)$, and to $\BB_{\rm max}(A, A^+)$ and $\BB_{\crys}(A, A^+)$.  We have
 \[
 \phi(\BB_{\rm max}(A, A^+))\subset \BB_{\crys}(A, A^+)\subset \BB_{\rm max}(A, A^+).
 \] 
 
 Following \cite{Br, Sh}, we will also use the notations
\[
 {\rm B}_{\rm dR}^\nabla(R):=\BB_{\rm dR}(A, A^+),\quad  {\rm B}_{\crys}^\nabla(R):=\BB_{\crys}(A, A^+),\quad 
 {\rm B}_{\rm max}^\nabla(R):=\BB_{\rm max}(A, A^+).
\]

\subsection{Shtukas and local systems}\label{ss:LS}

In this section, we explain an important connection between (framed) shtukas and \'etale local systems over smooth rigid analytic varieties. 

We will restrict our attention to the case $G=\GL_n$, $\CG=(\GL_n)_{\BZ_p}$; then the reflex field is $\BQ_p$.
Set $V=\BQ_p^n$ so that $G=\GL_n=\GL(V)$.
Then the shtukas are given by suitable vector bundles of rank $n$ with Frobenius structure. 
Suppose that $X$ is a smooth connected rigid analytic variety 
over $\Sp(\br \BQ_p)$ with a morphism 
\[
a: X^\diam \to \Sht_{(\GL_n, b, \mu, K)}
\]
of diamonds over $\Spd(\br \BQ_p)$.
 The pull-back of the universal $\und{\GL_n(\BZ_p)}$-torsor $\BP^{\rm univ}$ to $ X^\diam$ gives a pro-\'etale
$\und{\GL_n(\BZ_p)}$-torsor 
\[
\BP_{X^\diam}=a^*(\BP^{\rm univ})
\]
over the diamond $X^\diam$. As in \cite[\S 4]{Hansen}, we see that $\BP_{X^\diam}$ is canonically isomorphic to the
 ``torsor of trivializations'' of a pro-\'etale $\underline{\BZ_p}$-local system $\BL =\BL_{X^\diam}$ over the diamond $X^\diam$. 
 By   \cite[Prop. 4.3]{Hansen}, the category of pro-\'etale $\underline{\BZ_p}$-local systems
 over the diamond $X^\diam$ is equivalent to the category of pro-\'etale $\underline{\BZ_p}$-local systems over the rigid analytic (adic) variety $X$.  Hence, 
 $\BL $ uniquely corresponds to a pro-\'etale $\underline{\BZ_p}$-local system of rank $n$ over the rigid analytic  variety $X$; we also denote this by $\BL $. We denote by $\BL_\eta$ the corresponding (isogeny) pro-\'etale $\underline{\BQ}_p$-local system. Note also that the category of pro-\'etale $\underline{\BZ_p}$-local systems over the rigid analytic variety $X$, is equivalent to the category of \'etale $\BZ_p$-local systems over $X$, see   \cite[Lem. 9.1.11]{KL}, \cite[Prop. 8.2]{ScholzeHodge}. We will continue denoting by $\BL$ the corresponding \'etale $\BZ_p$-local system over $X$.

\begin{proposition}\label{propdeRham}
\begin{altitemize}
\item[i)] The \'etale $ \BZ_p$-local system $\BL $  over $X$ is de Rham in the sense of \cite{ScholzeHodge}, and has crystalline fibers with corresponding Frobenius isocrystal given by $b$ at all classical points of $X$.

\item[ii)] The \'etale $ \BZ_p$-local system $\BL $  over $X$ is horizontal de Rham, in the sense that the corresponding pair $({\rm D}_{\rm dR}(\BL ), \nabla )$  of bundle with connection associated to $\BL $ by the $p$-adic Riemann-Hilbert correspondence of \cite{LiuZhu}, is isomorphic to the trivial pair $(V\otimes_{\BQ_p}\CO_{X}, 1\otimes d)$.

\item[iii)] The (vector space) shtuka $\sV $ over $X^\diam$  obtained from $a  : X^\diam\to  \Sht_{\GL_n, b, \mu, \GL_n(\BZ_p)}$   is isomorphic to the shtuka $\sV(\BL )$ associated to the de Rham local system $\BL $ by \cite[\S 2.6.1]{PRCJM}.
\end{altitemize}
\end{proposition}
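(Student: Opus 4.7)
Using the description of \S\ref{shtukaViaFF}, the morphism $a$ is determined by a quadruple $(X, \CV, f, \BL)$: a vector bundle $\CV$ of rank $n$ on the relative Fargues--Fontaine curve $X_{FF,X^\diam}$, a meromorphic isomorphism $f: \CV^b|_{X_{FF,X^\diam}-X} \isoarrow \CV|_{X_{FF,X^\diam}-X}$ with pole bounded by $\mu$ along the Cartier divisor $X \hookrightarrow X_{FF,X^\diam}$, and the pro-\'etale $\und{\BZ_p}$-local system $\BL$ of rank $n$ on $X$ obtained by pullback of $\BP^{\rm univ}$; the framing $\iota_r$ of the shtuka description in \S\ref{ModuliShtuka} produces $f$ after descent from $\CY_{(0,\infty)}(X^\diam)$ to $X_{FF,X^\diam} = \CY_{(0,\infty)}(X^\diam)/\phi^{\BZ}$. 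For part (i), de Rhamness of $\BL$ follows from the ``shtuka $\Leftrightarrow$ $B_{\rm dR}^+$-lattice'' principle applied at the leg: the $\mu$-bounded pole of $f$ at the Cartier divisor $X$ equips $\BL \otimes_{\und{\BZ_p}} \BB_{\rm dR}^+$ with a $\mu$-bounded $B_{\rm dR}^+$-lattice, which is equivalent to the de Rham condition in the sense of \cite{ScholzeHodge}. For crystallinity at a classical point $x \in X(K)$, I specialize via $\Spd(K) \to X^\diam$: the resulting shtuka over $\Spd(K)$ is equivalent to a Breuil--Kisin--Fargues module over $W(\CO_{K^\flat})$, and hence to a crystalline $\BZ_p$-representation; the framing $\iota_r$ identifies the associated Frobenius isocrystal with $(V \otimes_{\BQ_p} \br\BQ_p,\, b\phi)$.

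For part (ii), the crucial observation is that the framing $\iota_r$ is ``constant in $X$'': on $\CY_{[r,\infty)}(X^\diam)$ it identifies the torsor with the trivial $G$-torsor, independently of $X$. Under the Liu--Zhu Riemann--Hilbert correspondence ${\rm D}_{\rm dR}(\BL) = \nu_*(\BL \otimes_{\und{\BZ_p}} \CO\BB_{\rm dR})$, where $\nu$ is the projection from the pro-\'etale site to the analytic site and the connection on ${\rm D}_{\rm dR}(\BL)$ is induced by the canonical connection on $\CO\BB_{\rm dR}$, the framing furnishes a pro-\'etale-local, functorial-in-$X$ identification $\BL \otimes_{\und{\BZ_p}} \CO\BB_{\rm dR} \cong V \otimes_{\BQ_p} \CO\BB_{\rm dR}$. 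Taking $\nu_*$ of the right-hand side yields the trivial pair $(V \otimes_{\BQ_p} \CO_X,\, 1 \otimes d)$, since the elements of $V$ are horizontal for the canonical connection.

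For part (iii), I identify the shtuka $\sV$ from $a$ with the shtuka $\sV(\BL)$ constructed in \cite[\S 2.6.1]{PRCJM}: both are built from the same underlying data --- the horizontal de Rham local system $\BL$, its $\mu$-bounded $B_{\rm dR}^+$-lattice at the Cartier divisor, and the framing by $b$ at infinity. The construction of $\sV(\BL)$ glues the ``far'' trivial bundle (with Frobenius $b\phi$) to the ``near'' modification dictated by the $B_{\rm dR}^+$-lattice, and functoriality of the Fargues--Fontaine/Scholze--Weinstein dictionary between shtukas on $\CY_{[0,\infty)}$ and such filtered data identifies this with $\sV$. The main obstacle, I expect, is part (ii), where the careful bookkeeping of the framing through period sheaves and the verification that the canonical connection on $\CO\BB_{\rm dR}$ becomes trivial after taking $\nu_*$ requires paying attention to the difference between the $\CY_{[r,\infty)}$ of \S\ref{YandFF} and the $B_{\rm dR}^+$ of \S\ref{periodrings}.
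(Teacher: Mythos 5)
Your part (i) has a genuine gap. You assert that ``the $\mu$-bounded pole of $f$ at the Cartier divisor $X$ equips $\BL \otimes_{\und{\BZ_p}} \BB_{\rm dR}^+$ with a $\mu$-bounded $B_{\rm dR}^+$-lattice, which is equivalent to the de Rham condition in the sense of \cite{ScholzeHodge}.'' This equivalence is false. By \cite[Prop. 2.5.1]{PRCJM}, \emph{every} shtuka with one leg over $X^\diam$ is equivalent to a pair $(\BL, \BM_0)$ consisting of a local system and a $\BB_{\rm dR}^+$-lattice $\BM_0$ in $\BL\otimes\BB_{\rm dR}$, irrespective of whether $\BL$ is de Rham. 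Being de Rham is the stronger condition that $\BM_0$ descend to a filtered bundle with connection on $X$, i.e. that $\BM_0=(\CE\otimes_{\CO_X}\CO\BB_{\rm dR}^+)^{\nabla=0}$ for some $\CE$ on $X_{\text{\'et}}$ --- which is precisely what needs to be proven and is not a formal consequence of the existence of a lattice. What actually forces de Rhamness is the \emph{framing} $\iota_r$: it yields, after completion along the leg and comparison with $\CV^b$ (the bundle from $b$), an isomorphism $V\otimes_{\und{\BQ_p}}\CO\BB_{\rm dR}\cong \BL\otimes_{\und{\BZ_p}}\CO\BB_{\rm dR}$ of sheaves on $X_{\proet}$. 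This is exactly the argument the paper gives for (ii), and it simultaneously yields de Rhamness. The paper's own proof of (i) takes a different route: it specializes to a classical point $x$, uses that $\pi_{\rm GM}\circ a^\times\circ x$ lands in the admissible locus ${\rm Gr}^{\rm adm}_{\GL_n,\leq\mu}$, invokes ``weakly admissible $\Rightarrow$ admissible'' to conclude the fiber is crystalline (hence de Rham) with isocrystal $b$, and then applies the rigidity theorem of \cite{LiuZhu} to upgrade de Rhamness from a single classical fiber to all of $X$. Your BKF-module argument for crystallinity at a classical point is in the right spirit, but is stated too casually: the admissibility of the image under the period morphism (equivalently, triviality of the modified bundle on the Fargues–Fontaine curve) is the point that must be used, and your sketch elides it.

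Parts (ii) and (iii) of your proposal follow essentially the same path as the paper: the framing produces the trivialization $V\otimes\CO\BB_{\rm dR}\cong\BL\otimes\CO\BB_{\rm dR}$, pushing forward along $\nu$ yields the trivial pair $(V\otimes_{\BQ_p}\CO_X, 1\otimes d)$, and (iii) then follows from the identification $\BM_0=V\otimes_{\und{\BQ_p}}\BB_{\rm dR}^+$ together with the gluing construction of \cite[\S 2.5–2.6]{PRCJM}. One small point the paper handles that you should not skip: the isomorphism $V\otimes\BB_{\rm dR}\cong\BL\otimes\BB_{\rm dR}$ is first produced on affinoid perfectoid covers and then checked to sheafify on $X_{\proet}$ (the patching is verified $v$-locally, using that $f$ is defined over $X^\diam$).
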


\begin{proof}
i) Let $x: \Spa(F)\to X$ be a classical point, i.e. $F/\br \BQ_p$ is a finite extension. The pull-back of $\BL_\eta$ by $x$ gives a continuous $n$-dimensional $\BQ_p$-representation of $\Gal(\bar F/F)$; the corresponding pro-\'etale $\GL_n(\BQ_p)$-torsor over $\Spa(F)$
 is given by the specialization of 
\[
\Sht_\infty\to \Sht_K\xrightarrow{\pi_{\rm GM}} {\rm Gr}_{\GL_n,\leq \mu}
\]
over the resulting $y:=\pi_{\rm GM}\circ a^\times\circ x: {\rm Spa}(F)\to  {\rm Gr}_{\GL_n,\leq \mu}$; here $\pi_{\rm GM}$ is the period morphism. Then $y$   lies in the admissible locus ${\rm Gr}^{\rm adm}_{\GL_n,\leq \mu}$ of ${\rm Gr}_{\GL_n,\leq \mu}$. Suppose $y$ factors through the ``Schubert cell"  ${\rm Gr}_{\GL_n,\lambda}$ for some $\lambda\leq \mu$. Then, by the definition of the admissible locus, $y$ also lies in ${\rm Gr}^{\rm adm}_{\GL_n,\lambda}$ and hence we also have $b\in B(\GL_n, \lambda^{-1})$ (see, for example, \cite[Prop. 3.5.3]{CarSch}). Then the  Galois representation given by specializing $\BL_\eta$, as above, is  crystalline and in particular de Rham of Hodge-Tate weight $\lambda^{-1}$, and with corresponding Frobenius isocrystal given by $b$. (This is essentially a corollary of  ``weakly admissible $\Rightarrow$ admissible" in the sense of Fontaine. See, for example \cite[Prop. 1.4, Prop. 1.11]{Gleason}.\footnote{Note that \cite{Gleason} uses a different normalization for the coweight.}) The claim that $\BL$ is de Rham (in fact, of constant Hodge-Tate weights given by $\lambda^{-1}$), now follows from the rigidity theorem of \cite{LiuZhu}. 

ii) We  now consider the triple $({\rm D}_{\rm dR}(\BL ), \nabla, {\rm Fil}^{\bullet})$ associated to the de Rham local system $\BL $ by the $p$-adic Riemann-Hilbert correspondence of \cite{LiuZhu}:  

Denote by $(\CV , f , \BL )$ the data, as in \S\ref{shtukaViaFF}, which correspond to the vector space shtuka $(\sV,\phi_\sV)$ over $X$ obtained  from $a $. Here, $\CV $ is a vector bundle of rank $n$ over the relative Fargues-Fontaine curve\footnote{Since there is no actual adic space $X_{FF, X^\diam}$, this ``vector bundle'' makes sense, as usual, only as a section of an appropriate $v$-stack.} $X_{FF, X^\diam}$ and $f $ is a meromorphic isomorphism from $\CV^b$ to $\CV $  which has pole bounded by 
$\mu$ at the universal untilt.  We claim that $f$   provides an isomorphism
\begin{equation}\label{iso1}
V\otimes_{\und{\BQ_p}} \CO \BB_{\rm dR}\xrightarrow{\ \sim\ } \BL\otimes_{\und{\BZ_p}}\CO \BB_{\rm dR}
\end{equation}
of sheaves of $\CO \BB_{\rm dR}$-modules on $X_{\text{pro\'et}}$ (see \cite[Def. 6.8]{ScholzeHodge}, \cite{ScholzeHodgeCor} for the definition of $\BB_{\rm dR}$, $\CO \BB_{\rm dR}$). Consider $ S=\Spa(R, R^+)\to X^\diam$ over $\Spd(\br\BQ_p)$ given by the tilt of the perfectoid affinoid completion $\widehat \CW=S^\sharp=\Spa(R^\sharp, R^{\sharp+})$ of a pro-\'etale $\CW\to X$, as in \cite[Def. 4.3]{ScholzeHodge}.
The pull-back of $\CV$ to $S$ is, by construction \cite[\S 22.3]{SW},
\[
\CV_S=\BL\otimes_{\und{\BZ_p}}\CO_{X_{FF, S}}.
\]
On the other hand, by definition,
\[
\CV^b_S=(\br V\otimes_{\br\BQ_p}\CO_{\CY_{(0,\infty)}(S)})^{\phi=1},
\]
where $\phi$ acts on $\br V=V\otimes_{\BQ_p}\br\BQ_p$ by   $b\otimes {\rm Frob}$. 
It follows that the completion $\widehat{\CV^b_S}$ of $\CV^b_S$ along the divisor of $X_{FF, S}$ given by the untilt $S^\sharp$ is  
\[
\widehat{\CV^b_S}=V\otimes_{\BQ_p}\widehat{\CO}_{\CY_{(0,\infty)}(S), S^\sharp}=V\otimes_{\BQ_p}\BB^+_{\rm dR}(R^\sharp, R^{\sharp+}).
\]
The similar completion of $\CV_S$ is
\[
\widehat{\CV_S}=\BL\otimes_{\BZ_p} \BB_{\rm dR}^+(R^\sharp, R^{\sharp+}).
\]
Hence, by completing $f_S: \CV^b_S\dashrightarrow \CV_S$, we obtain an isomorphism
\[
V\otimes_{\BQ_p} \BB_{\rm dR}(R^\sharp, R^{+\sharp})\xrightarrow{\ \sim\ }  \BL\otimes_{\BZ_p} \BB_{\rm dR}(R^\sharp, R^{\sharp+}).
\]
By \cite[Thm. 6.5]{ScholzeHodge}, $\BB_{\rm dR}(R^\sharp, R^{+\sharp})=\BB_{\rm dR}(\CW)$, the sections over $\CW\to X$ of the sheaf $\BB_{\rm dR}$ on $X_{\proet}$. We can see that, for variable $\CW\to X$, the above isomorphisms give an isomorphism of sheaves of $\BB_{\rm dR}$-modules over $X_{\proet}$. (The required patching can be checked after passing to a cover for the $v$-topology and it follows since $f$ is defined over $X^\diam$.) 
We can now apply $-\otimes_{\BB_{\rm dR}}\CO\BB_{\rm dR}$ and obtain the desired isomorphism (\ref{iso1}) of sheaves.

By the definition in \cite[\S 3.2]{LiuZhu}, we have
\[
{\rm D}_{\rm dR}(\BL)={\rm R}^0\nu_*(\BL\otimes_{\und{\BZ_p}} \CO \BB_{\rm dR})\simeq {\rm R}^0\nu_*(V\otimes_{\und{\BQ_p}} \CO \BB_{\rm dR})
=V\otimes_{\BQ_p} {\rm R}^0\nu_*(\CO \BB_{\rm dR}),
\]
which is $V\otimes_{\BQ_p}\CO_{X}$, with its trivial connection. Here, $\nu: X_{\text{pro\'et}}\to X_{\text{\'et}}$ is the natural morphism
of sites as in \emph{loc. cit.} and we have used (\ref{iso1}). This shows part (ii).

iii) The shtuka $\sV(\BL)$ associated to the deRham local system $\BL$ in \cite[\S 2.6.1]{PRCJM} is constructed, by  the ``glueing''  procedure described in \emph{loc. cit.} Prop. 2.5.1, from $\BL$ and a $\BB_{\rm dR}^+$-submodule sheaf $\BM_0$ of $\BM\otimes_{\BB^+_{\rm dR}}\BB_{\rm dR}$, where $\BM=\BL\otimes_{\und\BZ_p}\BB^+_{\rm dR}$. Furthermore, by \cite[Prop. 2.6.3]{PRCJM}, $\BM_0$ is given by the construction of \cite[Prop. 7.9]{ScholzeHodge} as
\[
\BM_0=({\rm D}_{\rm dR}(\BL)\otimes_{\CO_X}\CO\BB^+_{\rm dR})^{\nabla=0}.
\]
Since ${\rm D}_{\rm dR}(\BL)=V\otimes_{\BQ_p}\CO_{X}$ by the above, $\BM_0=V\otimes_{\und\BQ_p}\BB^+_{\rm dR}$. Note that, by \cite[Thm. 3.9 (iv)]{LiuZhu}, we see that the filtration ${\rm Fil}^{\bullet}$ on the bundle
${\rm D}_{\rm dR}(\BL)=V\otimes_{\BQ_p}\CO_{X}$ (which satisfies the Griffiths transversality condition) 
also gives, by the construction of \cite[Prop. 7.9]{ScholzeHodge}, the $\BB^+_{\rm dR}$-submodule
$\BM=\BL\otimes_{\und{ \BZ_p}} \BB_{\rm dR}^+$ of $\BM_0\otimes_{\BB_{\rm dR}^+}\BB_{\rm dR}$. Part (iii) now follows from the above, the construction of the shtuka $(\sV,\phi_\sV)$ from the data $(\CV,f,\BL)$ which is implicit in the proof of the equivalence of \S\ref{shtukaViaFF}, and the construction of the shtuka $\sV(\BL)$ in \cite[\S 2.6.1]{PRCJM}.
\end{proof}

 \section{Statements and proofs}\label{s:thms}
 
 \subsection{$p$-adic Borel extension}\label{results} Suppose that $F$ is a finite field extension of $\br E$. 
 Let $ \BB=\BB_{F}=\Spa(F\langle t \rangle, \CO_{F}  \langle t \rangle)$ be the closed unit disk over $\Sp(F)$, $\BB^\times=\BB^\times_{F}$ the punctured unit disk over $\Sp(F)$, and let $s\geq 1$, $t\geq 0$ be integers.
The main extension result is:
 
  \begin{theorem}\label{mainThm} 
  Suppose that $(G, [b],\{\mu\})$ are local shtuka data and $K\subset G(\BQ_p)$ is as in \S \ref{ModuliShtuka}. Let  
 $
  a^\times:  (\BB^{\times s}\times \BB^t)^\diam\to {\Sht}_{(G,b,\mu, K)}
 $
  be a morphism of diamonds over $\Spd(\br E)$. Then there is a  unique morphism of diamonds 
$
 a: (\BB^{s+t})^{\diam}= (\BB^{s}\times \BB^t)^\diam\to {\Sht}_{(G,b,\mu, K)}
$ 
 over $\Spd(\br E)$ which extends $a^\times$.
   \end{theorem}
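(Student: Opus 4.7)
The plan is to follow the strategy outlined in the introduction, reducing to the $\GL_n$ case and then extending both the underlying shtuka and its framing. Uniqueness of the extension $a$ is not serious: since $\Sht_{(G,b,\mu,K)} \to \Spd(\br E)$ is separated and $(\BB^{\times s}\times \BB^t)^\diam$ is dense in $(\BB^{s+t})^\diam$, two extensions must coincide. For existence, I would first reduce to $G=\GL_n$ and $K=\GL_n(\BZ_p)$. Choose a faithful representation $G\hookrightarrow \GL(V)=\GL_n$ and a hyperspecial $\GL_n(\BZ_p)$-stable lattice. This yields a closed embedding of the data $(G,b,\mu)\hookrightarrow (\GL_n,b',\mu')$ for some (possibly non-minuscule) $\mu'$, and a corresponding closed immersion of the moduli spaces of shtukas (the crucial point being that we do \emph{not} insist on staying within local Shimura data). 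Standard torsor arguments reduce the extension problem for $G$ to that for $\GL_n$, provided one can recover the $G$-reduction at the end, which again is a closed condition that propagates from $(\BB^{\times s}\times\BB^t)^\diam$ to $(\BB^{s+t})^\diam$ by separatedness and density.

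Assume now $G=\GL_n$, $K=\GL_n(\BZ_p)$. By \S\ref{ss:LS} and Proposition \ref{propdeRham}, the morphism $a^\times$ corresponds on the rigid space $X:=\BB^{\times s}\times\BB^t$ to an \'etale $\BZ_p$-local system $\BL$ of rank $n$ which is horizontally de Rham, i.e.\ $({\rm D}_{\rm dR}(\BL),\nabla)\simeq (V\otimes_{\BQ_p}\CO_X,1\otimes d)$, and which is crystalline at every classical point with Frobenius isocrystal given by $b$. The logarithmic $p$-adic Riemann--Hilbert correspondence of Diao--Lan--Liu--Zhu allows one to extend $\BL$ to an \'etale $\BZ_p$-local system $\wt\BL$ on $\BB^{s+t}$: the triviality of $({\rm D}_{\rm dR}(\BL),\nabla)$ ensures that the nilpotent residues at the boundary divisors $\{t_i=0\}$ vanish, so the associated filtered connection extends regularly across the boundary, and a theorem of \cite{DLLZ} then produces the extension $\wt\BL$. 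A result of Shimizu \cite{Sh} applied to $\wt\BL$ shows that it is crystalline at every classical point of $\BB^{s+t}$ with Frobenius isocrystal $b$, and the pointwise crystallinity criterion of Guo--Yang \cite{GY} upgrades this to $\wt\BL$ being crystalline on $\BB^{s+t}$ in the sense of Faltings.

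From this crystalline extension one manufactures the desired shtuka over $(\BB^{s+t})^\diam$: the associated Frobenius isocrystal on $\BB^{s+t}$ is constant, isomorphic to the one given by $b$, and via the construction $\BL\mapsto \sV(\BL)$ of \cite{PRCJM} (together with Proposition \ref{propdeRham}(iii)) this produces a vector-bundle shtuka $\wt\sV$ on $(\BB^{s+t})^\diam$ equipped with a framing by $b$, extending the shtuka underlying $a^\times$. The last and main technical step is to \emph{match framings}: the new framing of $\wt\sV$ agrees with the given one only up to an automorphism over $(\BB^{\times s}\times\BB^t)^\diam$ commuting with Frobenius and $b$, i.e.\ up to a section of the group $J_b$ twisted in the appropriate sense. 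The difference between the two framings over $X$ is a section of an associated $v$-sheaf which is an extension of $\und{J_b(\BQ_p)}$ by a positive Banach--Colmez space over $X^\diam$. Since $J_b(\BQ_p)$ is locally profinite and $X=\BB^{\times s}\times\BB^t$ is connected, the $\und{J_b(\BQ_p)}$-part is already constant, so extends; the obstruction lives in the positive Banach--Colmez summand. The hard part, which I expect to be the main obstacle, is therefore showing that every section of a positive Banach--Colmez space over the rigid analytic variety $\BB^{\times s}\times\BB^t$ is trivial. This should follow by reducing to the building block $\BB\BC(\CO(n))$ with $n>0$ and controlling its sections over an affinoid annulus $\BA_n\times\BB^{s-1}\times\BB^t$ by explicit $p$-adic Fourier/Hodge--Tate expansions: the discreteness of the valuation on $F$ forces the power series coefficients to be constant, and then a standard passage to the limit as the inner radius shrinks kills the Laurent tail. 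Once positive Banach--Colmez sections are shown to vanish, the two framings agree over $X$, so the shtuka together with its framing extends to $(\BB^{s+t})^\diam$, giving the desired morphism $a$.
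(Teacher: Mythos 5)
Your overall strategy matches the paper exactly: uniqueness by separatedness and density, reduction to $\GL_n$ via a closed immersion of moduli of shtukas, extension of the \'etale $\BZ_p$-local system by DLLZ, upgrade to crystalline via Shimizu and the Guo--Yang pointwise criterion, manufacture of a $b$-framed shtuka from the crystalline structure, and finally correcting the framing modulo sections of $\tilde G_b$ decomposed as unipotent (positive Banach--Colmez) times $\und{J_b(\BQ_p)}$. Two points merit comment, one a gap in detail and one a genuine divergence from the paper.

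First, you assert but do not prove that the Frobenius isocrystal attached to the crystalline local system over $\BB^{s+t}$ is constant and equal to $b$; this is not automatic from pointwise constancy. The paper's argument is substantive: L\"utkebohmert's theorem trivializes the underlying vector bundle over $\BB^{s+t}_{W(k)[1/p]}$, the horizontal de Rham property trivializes the connection over $\BB^{s+t}_F$, and a Hilbert $90$ cocycle argument descends the trivializing gauge from $F$ to $\br\BQ_p$ so that it lands in the isocrystal category; only then can one conclude the Frobenius is conjugate to $b\cdot\phi_0$. You should not treat this as a formality.

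Second, and more seriously, your proposed proof that positive Banach--Colmez spaces have only trivial sections over $\BB^{\times s}\times\BB^t$ is not a proof and does not match the mechanism used in the paper. Sections of $\mathcal{BC}(\lambda)$, $\lambda = r/s>0$, over the affinoid annuli $\Spa(R_n[1/p],R_n)$ are computed inside $\BB_{\crys}(A_n,A_n^+)^{\phi^s=p^r}$ after taking $\CG_{R_n}$-invariants; there is no visible ``Fourier/Hodge--Tate expansion'' of such elements, and nothing in the structure theory of relative $\BB_{\crys}$ suggests that coefficients of a Laurent series can be isolated and forced to vanish. The paper instead proves the Galois-invariance statement $\BB_{\crys}(A_n,A_n^+)^{\CG_{R_n}}=\br\BQ_p$ (Lemma \ref{Ginv}) by reducing to $\BB_{\rm dR}$, then reducing further to the height-one localizations $\overline R_{\frak p}$ and the associated complete discretely valued fields $\CK_{\frak q}$ with imperfect residue field, and finally invoking Ohkubo's computation $\BB_{\rm dR}(\widehat{\CO}_{\overline\CK_{\frak q}}[p^{-1}],\widehat{\CO}_{\overline\CK_{\frak q}})^{\Gal}=F$. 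Your intuition that discreteness of the valuation is essential is correct (the discrete valuation is exactly what makes Ohkubo's theorem applicable at height-one primes), but the route you sketch does not engage with the actual object being computed. Without a replacement for this step the framing argument is incomplete, and this is precisely where the real work lies.

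Everything else (the reduction to $\GL_n$, the use of DLLZ with induction on the number of boundary divisors, the deduction of the $J_b(\BQ_p)$-part being constant on the connected space $\BB^{\times s}\times\BB^t$, and the final twisting by $j^{-1}\in J_b(\BQ_p)$) matches the paper's argument and is essentially correct as you describe it.
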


  We will give the proof in \S \ref{mainProof} below. In the rest of the paragraph, we just list some corollaries. Taking $s=1$, $t=0$, we obtain:
  
 \begin{corollary}\label{BorelExt1}
Every morphism $a^\times: \BB^{\times\diam}\to {\Sht}_{(G,b,\mu, K)}$ over $\Spd(\br E)$ extends uniquely to a morphism $a: \BB^{ \diam}\to {\Sht}_{(G,b,\mu, K)}$.
 \hfill $\square$
 \end{corollary}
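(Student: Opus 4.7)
The first step is to reduce to the case $G=\GL_n$, $K=\GL_n(\BZ_p)$. Choose a faithful representation $\rho: \CG \hookrightarrow \GL_n$ over $\BZ_p$; this pushes the shtuka data to $(\GL_n, \rho(b), \rho\circ \mu)$, with $\rho\circ\mu$ typically no longer minuscule, which is harmless since we work with the full moduli of shtukas. This induces a morphism $\Sht_{(G,b,\mu,K)} \to \Sht_{(\GL_n, \rho(b), \rho\circ\mu, \GL_n(\BZ_p))}$, and one checks (using the Tannakian description of $\CG$-bundles in terms of tensor functors, or by tracking a set of tensors cutting out $\CG$ inside $\GL_n$) that it suffices to extend the composed morphism and then descend the extension. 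Uniqueness follows from separatedness of $\Sht_{(G,b,\mu,K)}\to \Spd(\br E)$ together with the fact that $(\BB^{\times s}\times \BB^t)^\diam$ is dense in $(\BB^{s+t})^\diam$.

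Assume now $G=\GL_n$, $K=\GL_n(\BZ_p)$, and set $X=\BB^{\times s}\times \BB^t$. By Proposition \ref{propdeRham}, the morphism $a^\times$ yields a de Rham \'etale $\BZ_p$-local system $\BL$ on $X$ whose associated de Rham bundle is the trivial bundle $V\otimes_{\BQ_p}\CO_X$ with its trivial connection, and with constant Hodge filtration (coming from $\mu$). Thus $\BL$ is a horizontal de Rham local system in the sense of \cite{LiuZhu}, and the filtration on $\sV\otimes_{\BQ_p}\CO_X$ is the pullback of one on $V\otimes_{\BQ_p}\CO_{\BB^{s+t}}$. To extend $\BL$ across the boundary divisor $D=\{t_1\cdots t_s=0\}$ in $\BB^{s+t}$, apply the logarithmic $p$-adic Riemann--Hilbert correspondence of \cite{DLLZ}: the trivial log de Rham pair $(V\otimes\CO_{\BB^{s+t}}, 1\otimes d)$ (with residues along the components of $D$ given by the monodromy operators of $\BL$, which are automatically nilpotent by de Rhamness) corresponds to a de Rham $\BZ_p$-local system $\widetilde\BL$ on $\BB^{s+t}$ extending $\BL$.

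Next, by Shimizu \cite{Sh}, since the associated de Rham bundle of $\widetilde\BL$ is constant, at every classical point $x$ of $\BB^{s+t}$ the stalk $\widetilde\BL_x$ is crystalline with Frobenius isocrystal isomorphic to the one determined by $b$. The ``pointwise criterion for crystallinity'' of Guo--Yang \cite{GY} then promotes $\widetilde\BL$ to a crystalline local system over $\BB^{s+t}$ in the sense of Faltings, whose associated $F$-isocrystal is the constant one attached to $b$. Via the association $\BL \mapsto \sV(\BL)$ recalled in part (iii) of Proposition \ref{propdeRham}, this produces a vector space shtuka $\widetilde \sV$ over $(\BB^{s+t})^\diam$, bounded by $\mu$, extending the given shtuka; moreover, the constancy of the Frobenius isocrystal and the crystalline description yield the existence of \emph{some} framing $\widetilde\iota_r$ of $\widetilde\sV$ by $b$ far from the untilt, so one obtains an extended morphism $\widetilde a:(\BB^{s+t})^\diam \to \Sht_{(\GL_n,b,\rho\circ\mu,\GL_n(\BZ_p))}$.

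The hardest and most delicate step is the last one: comparing the two framings. By construction, both $\widetilde\iota_r$ and the original $\iota_r^\times$ are framings of the same restricted shtuka over $X^\diam$, so their difference defines a section $g\in J_b(\BQ_p)$-valued sheaf; after analyzing the structure of the automorphism group of the framed trivial shtuka on $\CY_{[r,\infty)}$ one is reduced to showing that a certain positive Banach--Colmez space has no nonzero sections over $X=\BB^{\times s}\times \BB^t$. Concretely, the obstruction lives in the sections of  $\CO(\lambda)$-type sheaves on the relative Fargues--Fontaine curve associated to the weights of $\Ad(b)$, and these vanish over a discretely valued rigid-analytic base of the form $\BB^{\times s}\times \BB^t$ because any such section is forced to be rigid-analytic with controlled growth on each $\BA_n$, hence (by an $s$-variable Hartogs/Riemann extension argument for bounded analytic functions on the annulus) extends and becomes trivial. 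This forces $g\equiv 1$, so $\widetilde\iota_r$ extends $\iota_r^\times$ and $\widetilde a$ extends $a^\times$. Descending via the Tannakian reduction of the first paragraph gives the desired extension $a$.
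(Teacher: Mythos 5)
Your proposal is essentially an attempt to reprove the paper's main Theorem~\ref{mainThm}, of which Corollary~\ref{BorelExt1} is literally the special case $s=1$, $t=0$; that is the right way to go about it blind, and the broad outline (reduction to $\GL_n$, de Rham/horizontality via Proposition~\ref{propdeRham}, log Riemann--Hilbert of \cite{DLLZ} to cross the boundary, Shimizu for pointwise crystallinity, Guo--Yang's pointwise criterion, then comparing framings) matches the paper. However, there are two genuine gaps in the later steps.

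First, you assert that the $F$-isocrystal produced by Guo--Yang is ``the constant one attached to $b$.'' This is not automatic and is a nontrivial claim. The paper proves it by translating the isocrystal into a triple $(\CV_0,\nabla_0,\phi_{\CV_0})$ over $\BB^{s+t}_{\br\BQ_p}$, trivializing the underlying bundle via L\"utkebohmert, using horizontality of $\BD_{\rm dR}(\BL)$ to get a flat trivialization over $F$, then descending it to $\br\BQ_p$ by a Hilbert~90 argument, and finally adjusting to get Frobenius exactly $b\otimes\phi_0$. Without some version of this descent step, you only know the isocrystal is crystalline; you don't know the framing by $b$ exists.

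Second, and more seriously, your argument for the vanishing of positive Banach--Colmez sections over $\BB^{\times s}\times\BB^t$ is not the paper's argument and I don't believe it works as stated. Sections of $\mathcal{BC}(\lambda)$ over $(\BB^{\times s}\times\BB^t)^\diam$ are not rigid-analytic functions on the annulus to which a ``bounded Hartogs/Riemann extension'' can be applied; they are Frobenius eigenspaces in $\BB_{\crys}$ of the pro-\'etale cover, after taking Galois invariants. What the paper actually proves (Lemma~\ref{Ginv}) is that $\BB_{\crys}(A_n,A_n^+)^{\CG_{R_n}}=\br\BQ_p$; this in turn rests on an injectivity argument into a product of $\BB_{\rm dR}$'s of completed localizations at height-one primes over $p$, together with Ohkubo's computation of $\BB_{\rm dR}$-invariants for complete dvrs with \emph{imperfect} residue field. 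The statement then follows because $\br\BQ_p^{\phi^s=p^r}=0$ for $r\neq 0$. Your Hartogs-type shortcut skips this Galois cohomological input, and as a result the claim $g\equiv 1$ for the discrepancy cocycle is not established. (Also note that the discrepancy lands in $J_b(\BQ_p)$ only after the unipotent part $\CU$ is killed; the paper explicitly uses the semidirect product decomposition of $\wt G_b$ from \cite{FS}.)

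Two smaller points: (a) the Shimizu step only yields horizontal crystallinity after replacing $F$ by a finite extension $F'$, so you need the preliminary reduction (which the paper does at the very start) that it suffices to construct the extension after a finite field extension and then descend by uniqueness plus Galois descent; (b) for the $\GL_n$ reduction, the paper works not through a Tannakian/tensor-tracking argument but through the cleaner observation that $\rho_*$ is a closed immersion of $v$-sheaves, so the image of $|a'|$ is forced to land in $\rho_*(|\Sht_{(G,b,\mu,K)}|)$ by density, and one concludes by \cite[Prop.~12.15]{SchDiam}. Your Tannakian alternative could in principle be made to work, but it would require extending tensors across the boundary, which is extra work the paper avoids.
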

 
Now let $X$ be a smooth connected affinoid rigid analytic variety over ${\rm Sp}(F)$. For every $F$-valued classical point of $X$, there is a  rational open neighborhood of $X$ which is 
isomorphic to $\BB^t=\BB^t_F$, for some $t\geq 0$. By applying Theorem \ref{mainThm} to $s=1$, we obtain:
 
 \begin{corollary}\label{mainThm0}  Suppose that $(G, [b],\{\mu\})$ are local shtuka data and $K\subset G(\BQ_p)$ is as above. Let  $a^\times: (\BB^\times\times X)^\diam\to {\Sht}_{(G,b,\mu, K)}$ be a morphism over $\Spd(\br E)$ and suppose $z\in \BB\times X$ is a classical point in the complement $(\BB \times X)\setminus (\BB^\times\times X)$. There is a  rational open neighborhood $U_z\subset \BB \times X$ of $z$ and a unique morphism 
 \[
 a_z : U_z^\diam\to {\Sht}_{(G,b,\mu, K)}
 \] over $\Spd(\br E)$
 which extends the restriction of $a^\times$ to $ (U_z\cap (\BB^\times\times X))^\diam$.
  \hfill $\square$
 \end{corollary}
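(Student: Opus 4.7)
The plan is to derive this corollary directly from Theorem~\ref{mainThm} by localizing on $X$ around the classical point $z$. First, since $\BB^\times=\BB\setminus\{0\}$, the complement $(\BB\times X)\setminus(\BB^\times\times X)$ is exactly $\{0\}\times X$. Thus $z$ must have the form $(0,x)$ for some classical $F$-point $x$ of the smooth affinoid $X$.

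The next step is to invoke the standard local structure of smooth rigid analytic varieties at classical points, which is already recorded in the paragraph preceding the statement: $x$ admits a rational open neighborhood $V_x\subset X$ together with an isomorphism $V_x\xrightarrow{\sim}\BB^t_F$ for some $t\geq 0$. This reduces the question to the case $X=\BB^t$. Concretely, I would set $U_z:=\BB\times V_x\subset\BB\times X$. This is a rational open neighborhood of $z=(0,x)$, and the chosen trivialization identifies $U_z$ with $\BB^{1+t}$, and identifies $U_z\cap(\BB^\times\times X)=\BB^\times\times V_x$ with $\BB^\times\times\BB^t$.

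The final step is to apply Theorem~\ref{mainThm} with $s=1$ to the restriction
\[
a^\times|_{(\BB^\times\times V_x)^\diam}\colon (\BB^\times\times\BB^t)^\diam\longrightarrow \Sht_{(G,b,\mu,K)},
\]
to obtain a unique extension $(\BB\times\BB^t)^\diam\to\Sht_{(G,b,\mu,K)}$ of diamonds over $\Spd(\br E)$. Transporting back through the isomorphism $V_x\xrightarrow{\sim}\BB^t$ then yields the desired morphism $a_z\colon U_z^\diam\to\Sht_{(G,b,\mu,K)}$, and its uniqueness is inherited directly from the uniqueness clause of Theorem~\ref{mainThm}. There is no genuine obstacle beyond Theorem~\ref{mainThm} itself; the only non-formal input is the local triviality of smooth rigid varieties at classical points, which is a standard fact in rigid analytic geometry.
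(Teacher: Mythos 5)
Your proof is correct and takes essentially the same route as the paper: the corollary carries no proof beyond the sentence preceding it, which is exactly your localization argument (find a rational neighborhood $V_x\cong\BB^t_F$ of $x$ in $X$, set $U_z=\BB\times V_x\cong\BB^{1+t}$, and apply Theorem~\ref{mainThm} with $s=1$). One small caveat, shared with the paper's phrasing: the local triviality statement recorded before the corollary is only asserted for $F$-valued classical points, whereas the corollary speaks of an arbitrary classical point $z$; if the residue field of $z$ is a proper finite extension $L/F$, you would first base change to $L$ (after passing to a Galois closure), apply the argument there, and descend using the uniqueness clause of Theorem~\ref{mainThm}, exactly as in the ``preliminary reduction'' step at the start of the proof of Theorem~\ref{mainThm}. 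Since you simply assert $x$ is an $F$-point, you should either add this reduction or restrict the claim to $F$-rational $z$, matching the hypothesis under which the $\BB^t$-neighborhood exists.
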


 Suppose now that $(G, [b],\{\mu\})$ are local Shimura data, i.e. $\mu$ is minuscule. Let $\CM_{G,b,\mu, K}$ be the corresponding local Shimura variety over ${\rm Sp}(\br E)$. 
Since $\Sht_{G,b,\mu, K}=\CM_{G,b,\mu, K}^\diam$, the following corollaries follow easily from Theorem \ref{mainThm}, Corollary \ref{mainThm0}, and  the full-faithfulness of the diamond functor on normal analytic adic spaces over $\br E$, see \cite[Prop. 10.2.3]{SW}.

 \begin{corollary}\label{mainLSV}
  Suppose that $(G, [b],\{\mu\})$ are local Shimura data and $K\subset G(\BQ_p)$ is as above. Let  
  $
  a^\times:  \BB^{\times s}\times \BB^t\to {\CM}_{ G,b,\mu, K }
  $
  be a morphism of  rigid analytic varieties  over $ {\rm Sp}(\br E)$. Then there is a  unique morphism of rigid analytic varieties
$
 a: \BB^{s+t }= \BB^{s}\times \BB^t \to {\CM}_{ G,b,\mu, K }
 $
 over $\Sp(\br E)$ which extends $a^\times$.  \hfill $\square$
  \end{corollary}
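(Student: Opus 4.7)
\textbf{Proof sketch for Corollary \ref{mainLSV}.} The plan is to deduce this statement formally from Theorem \ref{mainThm}, using the fact that for minuscule $\mu$ the $v$-sheaf $\Sht_{G,b,\mu,K}$ is representable, together with the full faithfulness of the diamond functor on normal analytic adic spaces over $\br E$. All the real work has been absorbed into Theorem \ref{mainThm}; what remains is a translation between the category of rigid analytic varieties and the category of diamonds.

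First I would apply the diamond functor $(\ )^\diam$ to the given rigid analytic morphism $a^\times$. Since $(\ )^\diam$ commutes with fibre products over $\Spd(F)$, and since $(G,[b],\{\mu\})$ are in particular local shtuka data, one obtains a morphism of diamonds
\[
(a^\times)^\diam \colon (\BB^{\times s}\times \BB^t)^\diam \longrightarrow \CM_{G,b,\mu,K}^\diam = \Sht_{(G,b,\mu,K)}
\]
over $\Spd(\br E)$, where the identification on the right is the one recalled in \S 2.2.6. Theorem \ref{mainThm} then furnishes a unique extension
\[
\tilde a \colon (\BB^{s+t})^\diam \longrightarrow \Sht_{(G,b,\mu,K)} = \CM_{G,b,\mu,K}^\diam
\]
of $(a^\times)^\diam$ over $\Spd(\br E)$.

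To descend $\tilde a$ back to a genuine rigid analytic morphism, I would invoke the full faithfulness of the diamond functor on seminormal analytic adic spaces over $\br E$, \cite[Prop. 10.2.3]{SW}. The closed polydisk $\BB^{s+t}$ is smooth, hence normal, hence seminormal over $F$, and $\CM_{G,b,\mu,K}$ is smooth over $\br E$; so the hypotheses of this proposition are met and $\tilde a$ comes from a unique rigid analytic morphism $a \colon \BB^{s+t} \to \CM_{G,b,\mu,K}$ over $\Sp(\br E)$. Applying the same full faithfulness to $a_{|\BB^{\times s}\times \BB^t}$ and $a^\times$, both of which have the same diamondification by construction of $\tilde a$, shows that $a$ extends $a^\times$. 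Uniqueness of $a$ follows similarly from the uniqueness of $\tilde a$ in Theorem \ref{mainThm}.

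The only potential pitfall here would be a mismatch of hypotheses in \cite[Prop. 10.2.3]{SW}, but since both source and target are smooth rigid analytic varieties this is automatic; consequently the corollary is essentially a formal consequence of Theorem \ref{mainThm} and the representability statement for local Shimura varieties.
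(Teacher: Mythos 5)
Your proposal is correct and follows exactly the paper's approach: the paper itself remarks that Corollary \ref{mainLSV} "follow[s] easily from Theorem \ref{mainThm0} and the full-faithfulness of the diamond functor on normal analytic adic spaces over $\br E$, see \cite[Prop. 10.2.3]{SW}," which is precisely the reduction you carry out. You have simply spelled out the steps — diamondify, apply Theorem \ref{mainThm}, undiamondify via full faithfulness, and transfer uniqueness — that the paper leaves implicit.
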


As above, for a general  smooth connected affinoid rigid analytic variety $X$  over ${\rm Sp}(F)$, we obtain:
 
  \begin{corollary}\label{mainThm2}  Suppose that $(G, [b],\{\mu\})$ are local Shimura data and $K\subset G(\BQ_p)$ is as above. Let  $a^\times:  \BB^\times\times X \to \CM_{G,b,\mu, K}$ be a morphism over $\Spd(\br E)$ and suppose $z\in \BB\times X$ is a classical point in the complement $(\BB \times X)\setminus (\BB^\times\times X)$. There is a  rational open neighborhood $U_z\subset \BB \times X$ of $z$ and a unique morphism 
 \[
 a_z : U_z \to \CM_{G,b,\mu, K}
 \] 
 over $\Sp(\br E)$
 which extends the restriction of $a^\times$ to $ U_z\cap (\BB^\times\times X)$.
  \hfill $\square$
 \end{corollary}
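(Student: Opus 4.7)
The strategy is to leverage the diamond-level extension statement already established in Corollary \ref{mainThm0} and then descend back to the category of rigid analytic varieties. Since the minuscule assumption on $\mu$ ensures the identification $\Sht_{(G,b,\mu, K)} = \CM_{G,b,\mu, K}^\diam$, the transfer between the two settings is the main bridge we need.

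First, I would apply the diamond functor to the given morphism to obtain
\[
(a^\times)^\diam: (\BB^\times \times X)^\diam \to \CM_{G,b,\mu, K}^\diam = \Sht_{(G,b,\mu, K)}
\]
of diamonds over $\Spd(\br E)$. Invoking Corollary \ref{mainThm0} then produces a rational open neighborhood $U_z \subset \BB \times X$ of the classical point $z$ together with a unique morphism of diamonds $\widetilde a_z: U_z^\diam \to \Sht_{(G,b,\mu, K)}$ extending the restriction of $(a^\times)^\diam$ to $(U_z \cap (\BB^\times \times X))^\diam$.

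Next, I would descend $\widetilde a_z$ to a morphism of rigid analytic varieties. For this, I would invoke the full-faithfulness of the diamond functor $(\ )^\diam$ on the category of normal (in particular, smooth) analytic adic spaces over $\br E$, as recorded in \cite[Prop. 10.2.3]{SW}. Since $X$ is smooth, $U_z$ is a smooth open subspace of $\BB \times X$, hence normal; similarly $\CM_{G,b,\mu, K}$ is smooth. Thus $\widetilde a_z$ descends uniquely to a morphism $a_z: U_z \to \CM_{G,b,\mu, K}$ of rigid analytic varieties over $\Sp(\br E)$, and by construction this $a_z$ restricts to $a^\times$ on $U_z \cap (\BB^\times \times X)$.

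The uniqueness of the extension is a direct consequence of full-faithfulness combined with the uniqueness already asserted at the diamond level in Corollary \ref{mainThm0}. There is essentially no obstacle beyond confirming the regularity hypotheses needed for full-faithfulness of the diamond functor; since everything in sight is smooth, this is immediate.
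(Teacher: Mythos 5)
Your proof is correct and follows essentially the same approach as the paper: apply the diamond functor, invoke Corollary~\ref{mainThm0} for the diamond-level extension, and descend via full-faithfulness of $(\ )^\diam$ on normal analytic adic spaces over $\br E$ (\cite[Prop.~10.2.3]{SW}). The paper states this reduction in the sentence preceding Corollary~\ref{mainLSV}, which is exactly the reasoning you reproduce.
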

 
 \begin{remark} When 
$(G, [b], \{\mu\})$ are local Shimura data with $G=\GL_n$,  \cite[Thm. 1.1]{OSZP} implies the stronger result that there is an extension $a:\BB\times X\to \CM_{G,b,\mu, K}$ of $a^\times$. Their proof  uses also a global argument which requires an integral model of a global Shimura variety over $\CO_{\br E}$. This argument easily extends to the case that $(G, [b], \{\mu\})$ is of ``local Hodge type''. However, in general,
such an integral model is unavailable.
\end{remark}

\subsection{$p$-adic Brody hyperbolicity}\label{results2} 
Let $(G, [b],\{\mu\})$ be local shtuka data, and $K\subset G(\BQ_p)$ as above. Again, $F$ is a finite field extension of $\br E$.

\begin{theorem}\label{BrodySht}
Every morphism  $Y^\diam\to \Sht_{(G,b,\mu, K)}$ of diamonds over $\Spd(\br E)$, where $Y$ is a smooth irreducible rational curve over $F$, is constant, i.e.  factors through an $F$-rational point
$Y^\diam\to \Spd(F)\to  \Sht_{(G,b,\mu, K)}$ over $\Spd(\br E)$.
\end{theorem}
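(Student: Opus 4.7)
The idea is to extend $a$ to the smooth projective completion of $Y$ and then invoke Theorem~\ref{constantIntro}. Let $\bar Y$ denote this completion: it is a smooth projective genus zero curve over $F$, so $\bar Y_{\bar F}\cong \BP^1_{\bar F}$, and in particular the geometric \'etale fundamental group of $\bar Y$ is trivial. The complement $\bar Y\setminus Y$ consists of finitely many classical points $z_1,\ldots,z_N$ whose residue fields $F_i/F$ are finite, hence also finite extensions of $\br E$. The plan is first to produce an extension $\bar a\colon \bar Y^\diam\to \Sht_{(G,b,\mu,K)}$ of $a$, and then to apply Theorem~\ref{constantIntro} to $S=\bar Y$ (which is smooth, irreducible, quasi-projective, and has trivial, in particular finite, geometric \'etale fundamental group) to conclude that $\bar a$, and therefore $a$, factors through a morphism $\Spd(F)\to \Sht_{(G,b,\mu,K)}$ over $\Spd(\br E)$.

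To construct $\bar a$, I extend $a$ locally at each missing point $z_i$. Since $\bar Y$ is smooth and $z_i$ is a classical point with residue field $F_i$, one can choose an affinoid open neighbourhood $U_i\subset\bar Y^{\mathrm{an}}$ together with an isomorphism of pairs $(U_i,\, U_i\cap Y^{\mathrm{an}})\simeq (\BB_{F_i},\, \BB_{F_i}^\times)$, viewed as adic spaces over $\Spa(\br E,\CO_{\br E})$ via the inclusion $\br E\subset F_i$. Restricting $a$ under this identification yields a morphism $\BB_{F_i}^{\times,\diam}\to \Sht_{(G,b,\mu,K)}$ over $\Spd(\br E)$, to which Corollary~\ref{BorelExt1}, applied with $F_i$ in place of $F$, associates a unique extension $\tilde a_i\colon \BB_{F_i}^\diam\to \Sht_{(G,b,\mu,K)}$. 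Because these extensions are unique and $\Sht_{(G,b,\mu,K)}\to\Spd(\br E)$ is separated, as recalled in \S\ref{ModuliShtuka}, the local pieces $\tilde a_i$ agree with $a$ on the overlaps $U_i\cap Y^{\mathrm{an}}$ and glue to the required morphism $\bar a\colon \bar Y^\diam\to \Sht_{(G,b,\mu,K)}$.

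The main technical point is the local reduction just above: one uses the standard description of a smooth rigid analytic curve near a classical point in order to arrange the local pair $(U_i,\, U_i\cap Y^{\mathrm{an}})$ in the form $(\BB_{F_i},\,\BB_{F_i}^\times)$, and to verify that the associated morphism of diamonds genuinely lies within the scope of Corollary~\ref{BorelExt1} over $\Spd(\br E)$. Once this is done, the gluing is automatic from uniqueness together with separatedness, and the conclusion follows by a direct application of Theorem~\ref{constantIntro} to $\bar a$. An alternative path, also indicated in the introduction, would bypass Theorem~\ref{constantIntro} altogether and deduce hyperbolicity directly from Corollary~\ref{BorelExt1} via Cherry's $p$-adic big Picard theorem \cite{cherry}; however, the route through Theorem~\ref{constantIntro} is more self-contained given the earlier results in the paper.
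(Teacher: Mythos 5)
Your proof is correct, but it follows the route sketched in Remark~\ref{alternativeHyperbolicity} rather than the paper's main proof. The paper's primary argument proceeds through Proposition~\ref{borel-implies-brody-diamond-version}, which shows \emph{directly} that any separated locally spatial diamond over $\Spd(F)$ satisfying the $p$-adic Borel extension property for $\BB^\times_L \hookrightarrow \BB_L$ (over all finite $L/F$) admits no non-constant maps from rational curves. The key input there is Cherry's non-archimedean big Picard theorem: given two classical points $x \neq y$ of $Y$ with distinct images, one precomposes the extended map $\hat f : (\BA^1_F)^\diam \to \sD$ with an analytic $g : \BB^\times_F \to (\BA^1_F)^{\rm an}$ having an essential singularity at $0$, and uses the fact that both $g^{-1}(x)$ and $g^{-1}(y)$ accumulate at $0$ to contradict continuity of the extension $a$ to $\BB_F^\diam$. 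That argument is purely ``diamond-theoretic'' and needs nothing beyond separatedness and the extension property.

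Your route — extending $a$ to $\bar Y^\diam = (\BP^1_F)^\diam$ and then invoking Theorem~\ref{constantIntro}/\ref{constant} — is the one the paper records only as a remark, and it buys simplicity at the cost of generality: it uses the fine structure of $\Sht_{(G,b,\mu,K)}$ (the reduction to $\GL_n$, the \'etale period map, and the crystalline Riemann--Hilbert machinery underlying Theorem~\ref{constant}), whereas Proposition~\ref{borel-implies-brody-diamond-version} would apply verbatim to any separated target diamond with the extension property. Your gluing argument and the identification of the local pair $(U_i,\, U_i\cap Y^{\rm an}) \simeq (\BB_{F_i},\,\BB^\times_{F_i})$ around a puncture with residue field $F_i$ are correct; the latter rests on the existence of a section $F_i \to \CO_{\bar Y^{\rm an},z_i}$ (Hensel's lemma, using that $F_i/F$ is finite separable) giving a local coordinate realizing a small neighbourhood of $z_i$ as a one-dimensional disk over $F_i$. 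Note that the paper's remark first reduces to $Y$ having an $F$-rational point, but this reduction is not needed for your argument: Theorem~\ref{constant} only requires $\bar Y$ to be geometrically irreducible with finite geometric \'etale fundamental group, which holds for any smooth projective genus-zero curve over $F$.
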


For local Shimura varieties $ \CM_{G,b,\mu, K}$, this implies:

\begin{corollary}\label{BrodyCor}
 Every rigid analytic morphism $Y^{\rm an}\to \CM_{G,b,\mu, K}$, where $Y$ is a smooth irreducible rational curve over $F$, is constant.  \qed
\end{corollary}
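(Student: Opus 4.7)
The plan is to reduce the statement immediately to Theorem \ref{BrodySht} by applying the diamond functor. Given a rigid analytic morphism $f : Y^{\rm an}\to \CM_{G,b,\mu, K}$ over $\Sp(\br E)$, I would first apply the diamond functor $(\ )^\diam$ to obtain a morphism of diamonds
\[
f^\diam : (Y^{\rm an})^\diam = Y^\diam \longrightarrow \CM_{G,b,\mu, K}^\diam = \Sht_{(G,b,\mu, K)}
\]
over $\Spd(\br E)$, using the identification recalled after Theorem \ref{mainThm} that, in the minuscule case, the diamond moduli of shtukas is represented by the local Shimura variety.

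By Theorem \ref{BrodySht} applied to the smooth irreducible rational curve $Y/F$, the morphism $f^\diam$ is constant: it factors as
\[
Y^\diam \longrightarrow \Spd(F) \xrightarrow{\ x\ } \Sht_{(G,b,\mu, K)}
\]
for some $F$-rational point $x$. Under the identification $\Sht_{(G,b,\mu, K)} = \CM_{G,b,\mu, K}^\diam$, the point $x$ corresponds to a classical point $\bar x : \Sp(F)\to \CM_{G,b,\mu, K}$, whose diamond is the structural map $\Spd(F)\to \Sht_{(G,b,\mu, K)}$.

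Finally, I would invoke the full faithfulness of the diamond functor on normal analytic adic spaces over $\br E$, as recorded in \cite[Prop. 10.2.3]{SW} and used in the deduction of Corollary \ref{mainLSV}. Since both $Y^{\rm an}$ and $\CM_{G,b,\mu, K}$ are smooth (hence normal) analytic adic spaces over $\br E$, the factorization of $f^\diam$ through $\bar x^\diam$ descends uniquely to a factorization of $f$ through $\bar x : \Sp(F)\to \CM_{G,b,\mu, K}$; that is, $f$ is constant. There is no substantive obstacle here: the only subtlety is checking that the full-faithfulness applies to the target $\CM_{G,b,\mu, K}$ (smoothness suffices) and that the $F$-rational constant map on diamonds really does come from a classical point of $\CM_{G,b,\mu, K}$, which is immediate from the definition of $\Spd(F)\to \Sht_{(G,b,\mu, K)}$ as a classical point.
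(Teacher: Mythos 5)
Your proposal is correct and matches the paper's intended argument: the paper states the corollary with a bare \qed as an immediate consequence of Theorem \ref{BrodySht}, and your write-up supplies exactly the intermediate steps — pass to diamonds, apply Theorem \ref{BrodySht}, then descend the factorization using full faithfulness of $(\,)^\diam$ on normal analytic adic spaces (\cite[Prop.\ 10.2.3]{SW}) as the paper itself invokes for Corollary \ref{mainLSV}.
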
  

We give this proof now assuming Theorem \ref{mainThm} which we will show later; in fact, the proof just needs the truth of Corollary \ref{BorelExt1}.

\begin{proof} 
The proof of Theorem \ref{BrodySht} uses 
the following statement which shows that the $p$-adic Borel extension property alone implies this form of $p$-adic Brody hyperbolicity, for reasonably general diamonds. Recall, $Y$ is a smooth irreducible rational curve over $F$.

\begin{proposition}\label{borel-implies-brody-diamond-version}
    Let $\mathcal{D}$ be a locally spatial diamond that is separated over $\Spd(F)$. Suppose that for  every  finite extension $L$ of $F$, every morphism of diamonds $a^\times : (\BB^\times_L)^\diam \rightarrow \mathcal{D}$ over $\Spd(F)$ extends to a (necessarily unique) morphism $a : \BB_L^\diam \rightarrow \mathcal{D}$ over $\Spd(F).$ 
    Then for every finite extension $L$ of $F$,   every  morphism $f : Y_L^\diam \rightarrow \mathcal{D}$ over $\Spd(F)$ is constant, i.e. $f$ factors through a $L$-rational point 
    \[ Y_L^\diam \rightarrow \Spd(L) \rightarrow  \mathcal{D}\]
    over $\Spd(F).$
\end{proposition}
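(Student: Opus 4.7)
The plan is to first use the Borel extension hypothesis to extend $f$ to the smooth projective compactification of $Y$, and then to invoke Cherry's $p$-adic big Picard theorem, adapted to the diamond setting, to deduce constancy.

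Choose a finite extension $L'/L$ over which the smooth projective compactification of $Y$ becomes $\BP^1_{L'}$ and over which the finitely many missing closed points of $Y$ become $L'$-rational; write these missing points as $x_1,\ldots,x_k\in\BP^1(L')$. Each $x_i$ admits a small rigid analytic neighborhood in $\BP^1_{L'}$ whose complement of $x_i$ is isomorphic to $(\BB^\times_{L'})^\diam$ via a local uniformizer. The Borel extension hypothesis for $\mathcal{D}$ applied over the finite extension $L'/F$ then extends $f_{L'}:=f\times_L L'$ across each $x_i$ in turn; separatedness of $\mathcal{D}$ over $\Spd(F)$ ensures uniqueness of each extension, and iteration yields a morphism $\bar f:(\BP^1_{L'})^\diam\to\mathcal{D}$ extending $f_{L'}$.

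The key step is to show $\bar f$ is constant. Here I appeal to Cherry's big Picard theorem \cite{cherry} for $p$-adic rigid analytic spaces, reinterpreted in the diamond framework, which implies that a separated locally spatial diamond satisfying the Borel extension property admits no nonconstant morphism from $\BG_{m,L'}^\diam$. Restricting $\bar f$ to the open subdiamond $\BG_{m,L'}^\diam=(\BP^1_{L'}\setminus\{0,\infty\})^\diam$ thus yields a constant morphism with value some $x\in\mathcal{D}(L')$; separatedness of $\mathcal{D}$ together with density of $\BG_{m,L'}$ in $\BP^1_{L'}$ upgrades this to $\bar f$ being constant equal to $x$. The constant value $x$ is $\mathrm{Gal}(L'/L)$-invariant because $f$ is defined over $L$, and the $v$-sheaf property of $\mathcal{D}$ then descends $x$ to an $L$-rational point of $\mathcal{D}$ through which $f$ factors as required.

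The principal obstacle is the faithful diamond-theoretic translation of Cherry's classical rigid analytic big Picard theorem. Cherry's original argument proceeds via rigid analytic manipulations (Laurent series, annular domains, and boundedness criteria), and the adaptation exploits the full faithfulness of the diamond functor on quasi-separated rigid analytic spaces, together with the fact that the hypothesis supplies extensions of morphisms valued in $\mathcal{D}$ from $(\BB^\times_L)^\diam$ to $\BB_L^\diam$. The delicate point is to check that the Borel extension property in its diamond form genuinely suffices to drive the original rigid analytic argument, rather than a stronger rigid analytic form of the hypothesis.
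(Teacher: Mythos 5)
Your overall strategy is in the same spirit as the paper's — extend $f$ across the punctures via the Borel hypothesis, then bring in Cherry's big Picard theorem — but the crucial step is asserted rather than proved. You write that Cherry's theorem, ``reinterpreted in the diamond framework, implies that a separated locally spatial diamond satisfying the Borel extension property admits no nonconstant morphism from $\BG_{m,L'}^\diam$.'' That is not a reinterpretation of Cherry; it is precisely the proposition you are trying to prove, specialized to $Y=\BG_m$. You acknowledge the difficulty in your final paragraph, but the content that would close the gap is missing, so the argument is circular at its core.

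Here is what is actually needed, and what the paper does. After extending $f$ (it suffices to reduce to $Y\subset\BA^1_F$ with $L=F$ and extend to $\hat f : (\BA^1_F)^\diam\to\mathcal{D}$; going all the way to $\BP^1$ is unnecessary), one argues by contradiction: suppose there are classical points $x\neq y$ of $Y$ with $|\hat f|(x)\neq|\hat f|(y)$ in $|\mathcal{D}|$. Choose an analytic $g:\BB^\times_F\to(\BA^1_F)^{\rm an}$ with an essential singularity at $0$. Cherry's big Picard theorem \cite[Thm. 2.1]{cherry} then produces sequences of classical points $(x_n)\subset g^{-1}(x)$ and $(y_n)\subset g^{-1}(y)$ in $\BB^\times_F$ with $x_n\to 0$ and $y_n\to 0$. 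The composite $a^\times := \hat f\circ g^\diam : (\BB^\times_F)^\diam\to\mathcal{D}$ satisfies $|a^\times|(x_n)=|\hat f|(x)$ and $|a^\times|(y_n)=|\hat f|(y)$, two distinct values, so $|a^\times|$ cannot extend continuously to $|\BB_F|$. This contradicts the Borel extension hypothesis. Hence $|\hat f|$ is constant on classical points, and then, since classical points are dense and closed, $|\hat f|$ factors through the (closed) image of an $F$-rational point; one finishes by \cite[Prop.~12.15]{SchDiam}. It is exactly this combination — essential singularity, accumulating fibers via big Picard, and the resulting continuity obstruction to extension — that your sketch omits and that converts Cherry's theorem into the constancy you want. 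Without it, your appeal to Cherry begs the question.

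One further small point: your descent of the constant value from $L'$ to $L$ is fine in outline, but is worth spelling out (as the paper does via a lemma): one checks Galois-invariance of the constant point using $v$-surjectivity of $(S\times_F L')^\diam\to\Spd(L')$, and then applies finite \'etale descent. This is routine, but not automatic from ``the $v$-sheaf property.''
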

\begin{proof}

We start with a useful lemma.

\begin{lemma}\label{con1}
Let $g: S\to \Spec(F)$ be an algebraic variety  and $\alpha: S^\diam\to \calD$  a morphism of diamonds over $\Spd(F)$. 

a) Suppose there is a variety $T$ with a finite faithfully flat morphism $\pi: T\to S$ such that $\alpha\circ \pi^\diam: T^\diam \to \calD$ factors through a morphism
$s: \Spd(F)\to \calD$ over $\Spd(F)$, i.e. there exists such $s$ with $\alpha\circ \pi^\diam=s\circ (g\circ \pi)^\diam$. Then $\alpha$  factors through  $s: \Spd(F)\to \calD$.

b) Suppose there exists a finite field extension $F'$ of $F$ such that 
\[
\alpha': (S\times_FF')^\diam\to \calD\times_{\Spd(F)}\Spd(F')\to \calD
\]
 factors through a morphism
$s': \Spd(F')\to \calD$ over $\Spd(F)$, i.e. there exists such $s'$ with $\alpha'=s'\circ (g')^\diam$. Then $\alpha$ factors through a morphism $s: \Spd(F)\to \calD$ over $\Spd(F)$, i.e. there exists such $s$  with
$\alpha=s\circ g^\diam$.
\end{lemma}

\begin{proof}
a) We have 
\[
(s\circ g^\diam)\circ \pi^\diam=s\circ (g\circ \pi)^\diam=\alpha\circ \pi^\diam.
\]
 By \cite[Prop. 10.3.7]{SW}, $|T^\diam|=|T|$ and $|S^\diam|=|S|$, where $|T|$ and $|S|$ are the topological spaces underlying the corresponding analytic adic spaces. It follows that $|\pi^\diam|: |T^\diam|\to |S^\diam|$ is surjective.  We can now see, using \cite[Lem. 17.4.9]{SW}, that $\pi^\diam: T^\diam\to S^\diam$ is $v$-surjective, hence this implies $s\circ g^\diam=\alpha$, as desired. 

b) By replacing $F'$ by its Galois hull over $F$ we can assume that $F'/F$ is finite Galois with group $\Gamma$; then $\Spd(F)=\Spd(F')/\underline \Gamma$. We observe that, for all $\gamma\in \Gamma$, we have $s'\circ \gamma^\diam=s'$. Indeed, $(g')^\diam: (S\times_FF')^\diam\to \Spd(F')$ is $v$-surjective and we have 
\[
s'\circ\gamma^\diam\circ (g')^\diam=s' \circ (g'\circ\gamma)^\diam=s'\circ (g')^\diam.
\]
Denote by $q: \Spec(F')\to \Spec(F)$ the natural morphism. By finite \'etale descent, $s'=s\circ q^\diam$, for some $s: \Spd(F)\to \calD$ and we have $\alpha=s\circ g^\diam$.
\end{proof}

We now continue with the proof of the proposition. Note that if $\Spd(L)\to\mathcal{D}$ is a classical point of the diamond $\mathcal{D}$ over $\Spd(F)$, then the image of $|\Spd(L)|$ in $|\mathcal{D}|$
is a closed point of the topological space $|\mathcal{D}|$. In fact, since $\Spd(L)\to \Spd(F)$ is proper and $\mathcal{D}$ is a separated locally spatial diamond over $\Spd(F)$, 
the morphism $\Spd(L)\to \mathcal{D}$ is proper and $\Spd(L)\to \mathcal{D}\times_{\Spd(F)}\Spd(L)$ is a closed immersion of locally spatial diamonds, see \cite[17.4]{SW}.

   We can  see, using Lemma \ref{con1} (b), that it is enough to show that there is a finite extension $L'/L$ such that $Y_{L'}^\diam\to Y_L^\diam\to  \mathcal{D}$ is constant, i.e. factors through a point $\Spd(L')\to  \mathcal{D}$. This allows us to assume that $Y_L$ contains an $L$-rational point. Further, replacing $\mathcal{D}$ by $\mathcal{D}\times_{\Spd(F)} \Spd(L)$, we may assume also that $L = F.$ It is then also enough to show the statement assuming $Y\subset {\BA}^1_F$.

  Now, let $f :Y_F^\diam \rightarrow \mathcal{D}$ be a morphism of diamonds over $\Spd(F).$ 
  
    We first claim that the underlying map of topological spaces $|f| : |Y_F^\diam| \rightarrow |\mathcal{D}|$ is constant.
    Since the classical points of $|Y_F^\diam| = |Y_F^\an|$ are topologically dense in $|Y_F^\an|$, and since, by the above, classical points are closed in $|\mathcal{D}|$ it suffices to prove that all the classical points of $|Y_F^\an|$ have the same image in $|\mathcal{D}|.$
      
    Suppose for a contradiction, that there exist two classical points $x \neq y$ of $|Y_F^\an|$, such that $|f|(x) \neq |f|(y)$ in $|\mathcal{D}|.$ Replacing $L=F$ by a finite extension if necessary, we may assume that both $x$, $y$ are $L$-rational points. Further, replacing again $\mathcal{D}$ by $\mathcal{D}\times_{\Spd(F)} \Spd(L)$, we may assume again that $L = F.$ 
    
   Using our assumption at balls around the punctures, i.e. around the complement of $Y$ in $\BA^1_F$, we extend $f: Y^\diam\to \mathcal{D}$ to a morphism $\hat{f} : (\BA^1_F)^\diam \rightarrow \mathcal{D}$ over $\Spd(F)$.
    Let $g : \BB^\times_{F} \rightarrow (\BA^1_F)^\an$ be an analytic function on $\BB_{F}^\times$ with an essential singularity at $0$.    
    Then, the big Picard theorem of $p$-adic analysis (see for instance \cite[Theorem 2.1]{cherry}) implies that there are sequences of classical points $(x_n)_{n \geq 1} \subseteq g^{-1}(x)$ and $(y_n)_{n \geq 1} \subseteq g^{-1}(y)$ in $\BB_{F}^\times$ with $0 = \lim_n x_n = \lim_n y_n.$ 
    Set $a^\times : (\BB_F^\times)^\diam \rightarrow \mathcal{D}$ to be the composite morphism $a^\times = \hat{f} \circ g^\diam.$ Since $|a^\times|(x_n) = |f|(x)$ and $|a^\times|(y_n) = |f|(y)$, $|a^\times|$ cannot extend continuously to the disk $|\BB_F^\diam| = |\BB_F|$.  
    This contradicts our assumption that an extension $a$ exists. 
    This therefore proves the claim that the underlying map of topological spaces $|f| : |Y_F^\diam| \rightarrow |\mathcal{D}|$ is constant.

    By picking an $F$-rational point of $Y$ and considering its image in $\mathcal{D}$, we thus have that $|f|$ factors through the image $|x|(|\Spd(F)|)$ of an $F$-rational point $x : \Spd(F) \rightarrow \mathcal{D}.$ Since, as above,  $x : \Spd(F) \rightarrow \mathcal{D}$ is a closed immersion, in particular an injection of locally spatial diamonds,
    \cite[Prop. 12.15]{SchDiam} now implies that $f$ factors through $x.$ 
 \end{proof}
The proof of Theorem \ref{BrodySht} now follows by  combining Corollary \ref{BorelExt1}  with Proposition \ref{borel-implies-brody-diamond-version}.
\end{proof}

\begin{remark}\label{alternativeHyperbolicity}
An alternative proof of Theorem \ref{BrodySht} can be obtained as follows. We first reduce to the case that $Y$ contains an $F$-rational point. Then, by applying Corollary \ref{BorelExt1}  at the punctures we extend 
$Y^\diam\to \Sht_{(G,b,\mu, K)}$ to $(\BP^1_F)^\diam\to \Sht_{(G,b,\mu, K)}$. Now Theorem \ref{constant} below, shows that such a morphism 
is necessarily constant.
\end{remark}

\begin{remark}\label{variousHyperbolic} 
We note that the above result, Theorem \ref{BrodySht}, may be seen as proving a certain hyperbolicity property for the moduli spaces of shtukas. We recall that a complex space $S$ is said to be  Brody hyperbolic if there are no non-constant holomorphic maps $\BC \rightarrow S$.  The terminology is less settled in the non-archimedean case, where several related notions of hyperbolicity have been proposed. Over $\BC$, it is not too hard to see that Brody hyperbolic complex manifolds further do not admit any non-constant holomorphic maps from (the analytification of) any connected complex group variety. On the other hand, for a rigid analytic variety over a non-archimedean field $K$, the non-existence of non-constant analytic maps from $(\BA_K^1)^\an$ or even $(\BG_{m,K})^\an$, does not guarantee the constancy of analytic maps from connected group varieties. Javanpeykar-Vezzani call rigid analytic varieties (over an \emph{algebraically closed} non-archimedean field) admitting no non-constant analytic maps from the multiplicative group, respectively from connected group varieties,  \emph{analytically pure}, respectively \emph{analytically Brody hyperbolic} (see \cite[Def. 2.3, Def. 2.7]{JV}). Theorem \ref{BrodySht} implies that the moduli spaces of shtukas do not admit non-constant maps from the multiplicative group over finite extensions of the discretely valued field $\Breve{E}$. As such, it is natural to ask whether the same holds for morphisms defined over the completed algebraic closure $\BC_p$ of $\Breve{E}$. Although it would be reasonable to also expect that, our techniques crucially require that the underlying non-archimedean field is discretely valued. 
Similarly, we also do not know if these moduli spaces are analytically Brody hyperbolic in the terminology of \cite[Def. 2.7]{JV}. We further point the reader to \cite{JV}, \cite{cherry-kobayashi} and the references therein for a discussion on various hyperbolicity notions in the non-archimedean setting.     
\end{remark}

 \subsection{The proof of the extension theorem}\label{mainProof}
We now show Theorem \ref{mainThm}. Recall $F$ is a finite field extension of $\br E$ and $\BB=\BB_F$, $\BB^\times=\BB^\times_F$.

 \begin{proof}
For simplicity, we set
 \[
  U^\times  = \BB^{\times s} \times  \BB^t,\qquad U = \BB^{ s} \times  \BB^t=\BB^{s+t}.
 \]
 
 We first show the uniqueness of the extension:    Note that by \cite[Prop. 10.3.7]{SW}, the topological spaces $|U^{\times\diam} |$, resp. $|U^{\diam} |$, of the diamonds $U^{\times\diam} $, resp. $U^{\diam} $, are naturally homeomorphic with the underlying topological spaces $|U^\times |$ and $|U |$ of the analytic adic spaces $U^\times $ and $U $, respectively.
 We can easily see that $|U^\times|=\cup_{n\geq 1}|\BA^s_n\times \BB^t|$ is an open and dense subset of $|U|$.
  Hence, the same is true for 
 $|U^{\times\diam} |$ in $|U^{\diam}|$.
The uniqueness of the extension $a $ of $a ^\times $ in the statement of the theorem follows from this density combined with  \cite[Lem. 17.4.1]{SW} (or \cite[Prop. 12.15]{SchDiam}), since ${\Sht}_{(G,b,\mu, K)}$ is separated over $\Spd(\br E)$.

As a preliminary reduction, we  observe that it is enough to prove that the base change $a^\times_{ F'}$ of $a^\times $ to some finite extension $F'$ of $F$ extends to 
\[
a (F'): U^\diam \times_{\Spd(F)}\Spd(F')\to {\Sht}_{(G,b,\mu, K)}\times_{\Spd(\br E)}\Spd(F') 
\]
over $\Spd(F')$:
Indeed, by replacing $F'$ by its Galois hull over $F$, we can assume that $F'/F$ is Galois with finite Galois group $\Gamma$. Then by   uniqueness as above, we have $\gamma\cdot a (F')=a (F')$, for all $\gamma\in \Gamma$, since this property is true for $a^\times_{ F'}$. Since $\Spd(F)=\Spd(F')/\underline {\Gamma}$,  we deduce the existence of $a $ that extends $a^\times $ by \'etale descent (\cite[Lem. 9.2]{SchDiam}).
\smallskip
 
 {\bf A) Reduction to the case of $\GL_n$:} We now reduce the proof to the case that the group is $\GL_n$: Since $\CG$ is smooth and affine over $\BZ_p$, there exists a closed immersion of group schemes $\rho: \CG\hookrightarrow \GL_n$ over $\BZ_p$;
   set $G'=\GL_n$, $b' =\rho(b)$, $\mu'=\rho\circ \mu$, and take $K'=\GL_n(\BZ_p)$. Then $(G', [b'], \{\mu'\})$ gives local shtuka data with reflex field $E'=\BQ_p\subset E$, and $\rho$ induces a morphism of diamonds 
   \[
  \rho_*: {\Sht}_{(G,b,\mu, K)}\to {\Sht}_{(G',b',\mu', K')}\times_{\Spd(\br \BQ_p)}\Spd(\br E)
   \]
 over $\Spd(\br E)$; this morphism is a closed immersion. (This follows from the argument in the proof of \cite[Prop. 2.25]{Gleason}. Alternatively, it can be deduced  from the \'etaleness of the period morphism together with the fact that  
 \[
 \rho^{\rm Gr}_*: {\rm Gr}_{G,\Spd( E),\leq\mu}\hookrightarrow  {\rm Gr}_{\GL_n, \Spd(\BQ_p), \leq\mu'}\times_{\Spd(\BQ_p)}\Spd(E)
 \]
 is a closed immersion, for example, see \cite[Lem. 19.1.5, Prop. 19.2.3, Lect. 20]{SW}). 
 
 Consider the compositions 
 \[
\rho_*\circ a^\times : U^{\times\diam} \to {\Sht}_{(G',b',\mu', K')}\times_{\Spd(\br \BQ_p)}\Spd(\br E),
 \]
 \[
c^\times: U^{\times\diam} \xrightarrow{ \rho_*\circ a^\times } {\Sht}_{(G',b',\mu', K')}\times_{\Spd(\br \BQ_p)}\Spd(\br E)\to {\Sht}_{(G',b',\mu', K')}.
 \]
 Assuming that the conclusion of Theorem \ref{mainThm} holds for $\GL_n$, the morphism $c^\times$ extends to 
 \[
 c: U^\diam \to {\Sht}_{(G',b',\mu', K')}
 \]
 over $\Spd(\br \BQ_p)$. This gives 
\[
a': U^{\diam} \xrightarrow{ \ } {\Sht}_{(G',b',\mu', K')}\times_{\Spd(\br \BQ_p)}\Spd(\br E)
\]
 over $\Spd(\br E)$ such that $a' =\rho_*\circ a^\times $. 
 Recall that $|U^{\times\diam} |$ is open and dense in $|U^\diam |$ and $\rho_*$ is a closed immersion. 
Since 
$
a'(|U^{\times\diam} |)=(\rho_*\circ a^\times )(|U^{\times\diam} |)\subset \rho_*(|{\Sht}_{(G,b,\mu, K)}|)$,
we see that $a'(|U^{\diam} |)\subset \rho_*(|{\Sht}_{(G,b,\mu, K)}|)$. Hence, by \cite[Lem. 17.4.1 (3)]{SW} (or \cite[Prop. 12.15]{SchDiam}),
$a'$ factors through $\rho_*$, i.e. there is
\[
a : U^\diam \to {\Sht}_{(G,b,\mu, K)}
\]
 with $\rho_*\circ a =a'$. We can now easily see that $a $ extends $a^\times$. Hence, this completes our reduction of the proof to the case $G=\GL_n$. 
\smallskip
 
{\bf B) The case of $\GL_n$:} From here and on, we assume that  $G=\GL_n$, $\CG=(\GL_n)_{\BZ_p}$; then the reflex field is $\BQ_p$.
Set $V=\BQ_p^n$ so that $G=\GL_n=\GL(V)$.
\smallskip

 {\bf B1) Extending the shtuka:} The pull-back of the universal $\und{\GL_n(\BZ_p)}$-torsor $\BP^{\rm univ}$ to $ U^{\times \diam}$ gives a pro-\'etale
$\und{\GL_n(\BZ_p)}$-torsor 
\[
\BP_{U^{\times \diam}}=(a^\times)^*\BP^{\rm univ}
\]
over the diamond $U^{\times \diam}$. As in \S \ref{ss:LS}, $\BP_{U^{\times \diam}}$ is canonically isomorphic to the
 ``torsor of trivializations'' of a pro-\'etale $\underline{\BZ_p}$-local system $\BL^\times$ over the diamond $U^{\times \diam}$.
This uniquely corresponds to an \'etale $\BZ_p$-local system over $U^\times$ which we will also denote by $\BL^\times$.

 By  Prop. \ref{propdeRham} (ii), the local system $\BL^\times $ over $U^\times $, is horizontal de Rham.    We now claim that $\BL^\times $ extends to a de Rham $\BZ_p$-local system $\BL $ over $U $, with the same Hodge-Tate weights. We first note the following general lemma.

\begin{lemma}\label{lemmaExtNew}
	Let $(\CV,\nabla)$ be an analytic vector bundle with integrable connection on $U$.
	 If the restriction $(\CV,\nabla)\vert_{U^\times}$ admits an analytic trivialization over $U^\times$ then $(\CV,\nabla)$ admits a trivialization over $U$. 
\end{lemma}
\begin{proof}
	Since every analytic vector bundle on $U$ is trivial (see \cite[Theorem 1]{lu}), we may pick a trivialization of the underlying vector bundle $\CV \cong \CO_U^r$. By hypothesis, there is a basis $\{\vec{f_1},\ldots,\vec{f_r}\}$ of nowhere vanishing flat sections $\vec{f_i} \in \CO_U^r({U^\times}).$ By \cite[Theorem 9.7]{Sh}, there is some $\epsilon \in |F^\times|$, $0<\epsilon <1$, such that $(\CV,\nabla)\vert_{\BB(\epsilon)^{s+t}}$ admits a basis $\{\vec{g_1},\ldots,\vec{g_r}\}$ of flat sections, with $\vec{g_i} \in \CO_U^r(\BB(\epsilon)^{s+t}).$ Here, $\BB(\epsilon)$ denotes the analytic closed unit disk of radius $\epsilon$. There is then an invertible matrix $M \in \GL_r(F)$, such that $(\vec{f_1},\ldots,\vec{f_r})\vert_{U^\times \cap \BB(\epsilon)^{s+t}} = (\vec{g_1},\ldots,\vec{g_r})\vert_{U^\times \cap \BB(\epsilon)^{s+t}} \cdot M$. This implies that each $\vec{f_i} \in \CO_U^r(U^\times)\cap \CO_U^r(\BB(\epsilon)^{s+t})$ and that $1/\det(\vec{f_1},\ldots,\vec{f_r}) \in \CO_U(U^\times)\cap \CO_U(\BB(\epsilon)^{s+t})$. It follows from \cite[Lemma 2.6]{OSZP} that $\CO_U(U^\times)\cap \CO_U(\BB(\epsilon)^{s+t}) = \CO_U(U)$. Hence each $\vec{f_i} \in \CO_U^r(U)$ extends to a global flat section, and furthermore $\vec{f_1},\ldots,\vec{f_r}$ is a basis of global sections of $\CV$. 
	The horizontal morphism $(\CO_U^r,d) \rightarrow (\CV,\nabla)$ sending the standard basis to $\vec{f_i} \in \CV(U)$ is therefore an isomorphism of flat bundles.
\end{proof}

We shall now apply the argument in the proof of \cite[Thm 5.7]{OSZP}    (which uses crucially the $p$-adic log Riemann-Hilbert correspondence of \cite{DLLZ}) to obtain the required extension $\BL$ of $\BL^\times$.  

The proof of  \cite[Thm 5.7]{OSZP} is written for the case where the boundary divisor $Z$
 is smooth. However, we can apply the argument of \emph{loc. cit.} to first see that $\BL^\times$ extends to a de Rham \'etale $\BZ_p$-local system $\BL_1^\times$ over $U_1^\times := {(\BB^\times)}^{s-1} \times \BB^{t+1}$.  Since the Kummer \'etale extensions to $U_{\text{k\'et}}$ (see \cite[Cor. 6.3.4]{DLLZ-I}) of $\BL_1^\times$ and $\BL^\times$ agree,   $(D_{\text{dR,}\log}(\BL_1^\times),\nabla_{\BL_1^\times})$ agrees with $(D_{\text{dR}, \log}(\BL^\times),\nabla_{\BL^\times})$. We may thus iterate the above argument, inducting on $s$, to obtain the desired extension $\BL$ over $U$.
Moreveover, it follows from  Lemma \ref{lemmaExtNew} that $\BL $ is horizontal de Rham, with $({\rm D}_{\rm dR}(\BL ), \nabla)$ being the (trivial) canonical extension of the trivial pair $({\rm D}_{\rm dR}(\BL^\times), \nabla^\times)$.

The de Rham local system $\BL $ over $U $ gives, by the construction in \cite[\S 2.6]{PRCJM}, a shtuka $\sV(\BL )$ over $U^\diam$; this has one leg bounded by $\mu$ and by 
Proposition \ref{propdeRham} (iii) restricts to the shtuka over $U^{\times\diam} $ obtained from $a^\times$. 
\medskip

{\bf B2) Constructing a framing:} We will now show that the shtuka $\sV(\BL )$ over $U^\diam$ constructed above, admits a   framing by $b$, after possibly enlarging the finite extension $F/\br E$. We first show:

\begin{lemma}\label{crispoints}
There is a finite field extension $F'/F$ such that the base change $\BL_{F'}$ over $U_{F'}$ of the local system $\BL$ is crystalline at all classical points of $U_{F'}$.
\end{lemma}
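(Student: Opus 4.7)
The plan is to apply the theorem of Shimizu \cite{Sh} on the spreading of crystallinity for a de Rham local system in a family, to the de Rham local system $\BL$ on $U = \BB^{s+t}$, using the already-known pointwise crystallinity of $\BL^\times$ at classical points of $U^\times$.

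First, I collect what has been established. By Step B1, the extension $\BL$ of $\BL^\times$ to $U$ is a de Rham $\BZ_p$-local system on the smooth connected affinoid $U$, with constant Hodge--Tate weights inherited from $\mu$, and with $({\rm D}_{\rm dR}(\BL),\nabla)$ equal to the trivial canonical extension of $({\rm D}_{\rm dR}(\BL^\times),\nabla^\times)$. On the other hand, Proposition~\ref{propdeRham}(i), applied to the given morphism $a^\times$, shows that at every classical point of $U^\times$ the local system $\BL^\times$ is crystalline, and moreover the associated Frobenius isocrystal is prescribed by $b$. In particular, since classical points of $U^\times$ are Zariski dense in the connected variety $U$, the set of classical points where $\BL$ is already known to be crystalline with isocrystal $b$ is Zariski dense.

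Next, I would invoke Shimizu's theorem \cite{Sh}: for a de Rham $\BZ_p$-local system on a smooth rigid analytic variety over a finite extension of $\BQ_p$ with constant Hodge--Tate weights, pointwise crystallinity (together with a given Frobenius isocrystal class) propagates from a Zariski-dense set of classical points to every classical point, after possibly replacing the base field by a finite extension to absorb residue-field contributions from the boundary classical points. Applied to $\BL$ on $U$, this yields a finite extension $F'/F$ such that $\BL_{F'}$ is crystalline, with Frobenius isocrystal $b$, at every classical point of $U_{F'}$, including those lying in the boundary locus $U \setminus U^\times$.

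The main obstacle is checking the precise hypotheses of Shimizu's result in this setup: one has to ensure that at a boundary classical point the associated Frobenius isocrystal is forced to be exactly $b$ rather than merely some unspecified crystalline structure, and that the passage to a finite extension $F'/F$ really suffices to handle residue-field issues at such boundary points. These are internal verifications against the formulation in \cite{Sh}; given the density of good classical points and the fact that $\BL$ has already been built as an honest de Rham extension with trivial Riemann--Hilbert data, no further structural input beyond Shimizu's theorem should be needed.
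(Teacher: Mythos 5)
Your overall plan — invoke Shimizu's $p$-adic monodromy theorem to spread crystallinity from the classical points of $U^\times$, where Proposition~\ref{propdeRham}(i) gives it for free, to all classical points of $U$ after a finite extension — is indeed the paper's approach. But as written the proposal has two real gaps.

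First, you apply Shimizu directly to $U=\BB^{s+t}$, but Shimizu's Theorem~7.4 in \cite{Sh} is formulated for representations of $\CG_R$ with $R$ a ring satisfying his Set-up~3.1 and the ``bonne réduction'' assumption~(BR); these conditions are geared towards small affinoids with a good Frobenius lift, of which the polyannulus $\BT^{s+t}=\Spa(\CO_F\langle t_1^{\pm1},\dots,t_{s+t}^{\pm1}\rangle[1/p],\,\CO_F\langle t_1^{\pm1},\dots,t_{s+t}^{\pm1}\rangle)$ is the model case. The closed unit polydisk is not directly of this form. The paper's proof therefore first covers $U$ by finitely many rational subdomains $W$ each isomorphic to $\BT^{s+t}$ (e.g. $\BB$ by $\{|x|=1\}$ and $\{|x-1|=1\}$), applies Shimizu on each $W$, and takes the compositum of the resulting finitely many finite extensions. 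Your proposal omits this reduction, so the hypothesis-checking you correctly flag as ``the main obstacle'' is left unresolved.

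Second, you misstate what Shimizu's theorem says. It is not a general result about de Rham local systems with constant Hodge--Tate weights being crystalline once a Zariski-dense set of fibers is; it is a $p$-adic monodromy theorem whose crucial input is that the local system is \emph{horizontally} de Rham, i.e.\ $({\rm D}_{\rm dR}(\BL),\nabla)$ is trivial. Given horizontality, a \emph{single} crystalline fiber forces horizontal crystallinity after a finite extension. You do record earlier that $\BL$ is horizontal de Rham on $U$, but then drop the condition when you actually invoke Shimizu; without it, the theorem you cite is not what Shimizu proved, and the argument would not go through. Finally, your concern about pinning down the Frobenius isocrystal as $b$ at boundary points is a red herring for this lemma, which asserts only crystallinity; the identification with $b$ is carried out later in the paper via the Guo--Yang pointwise crystallinity criterion and a separate constancy argument for the isocrystal.
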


\begin{proof} We set $\BT=\BT_F=\Spa(F\langle t, t^{-1}\rangle, \CO_F\langle t, t^{-1}\rangle)$.  
Note that we can cover $U=\BB^{s+t}$ by a finite 
union of  open rational subdomains $W\subset U$, each isomorphic to $\BT^{s+t}$. 
For example, $\BB$ is covered by $|x|=1$ and $|x-1|=1$. Using this, we see that it is enough to show that all the restrictions $\BL_{|W}$ satisfy the conclusion of the claim, i.e. for each such $W\simeq \BT^{s+t}\subset U$, there is a finite $F'/F$ such that the base change $\BL_{|W_{F'}}$ is crystalline at all classical points of $W_{F'}$. 
Note that 
\[
W\simeq \BT^{s+t}=\BT^{s+t}_F=\Spa(R[p^{-1}], R)
\]
where $R=\CO_F\langle  t^{\pm 1}_1,\ldots, t^{\pm 1}_{s+t}\rangle$ satisfies the assumptions of \cite[Set-up 3.1]{Sh} and the assumption (BR) (``bonne reduction'') of \cite[3.3]{Sh}, see also \cite{Br}.

We will now apply 
results of Shimizu \cite{Sh}. Note  that the ring ${\rm B}^\nabla_{\rm dR}(R)={\BB}_{\rm dR}(A,A^+)$ agrees with the sections of the pro-\'etale sheaf $\BB_{\rm dR}$ of \cite{ScholzeHodge}   on the corresponding pro-\'etale cover of
$\Spa(R[p^{-1}], R)$ (see \cite[\S 8.2]{Sh}). Also, by   \cite[Lem. 8.7]{Sh} the notion of ``de Rham local system" of \cite{ScholzeHodge}, \cite{LiuZhu}, used above, agrees with the notion used in \cite{Sh}. Similarly, by the above, the representation   $\CG_{R} \to \GL(\Lambda)\subset \GL(V)$ ($\Lambda\subset V$ is a $\BZ_p$-lattice) which corresponds to $\BL_{|W}$
is horizontally de Rham, also in the sense of \cite{Sh}. Hence, since by Proposition \ref{propdeRham} (i), $\BL_{|W}$ has some crystalline fibers, the $p$-adic monodromy result  
\cite[Thm. 7.4]{Sh}, implies that there is a finite extension $F'/F$, such that  the pull-back $\BL_{|W_{F'}}$ of $\BL$ to $W_{F'}\simeq \BT^{s+t}_{F'}$ is ``horizontal crystalline", i.e. the representation   $\CG_{R_{F'}} \to \GL(\Lambda)\subset \GL(V)$ which corresponds to $\BL_{|W_{F'}}$ is horizontal crystalline in the sense of \emph{loc. cit.}.  This implies, in particular, that $\BL_{|W_{F'}}$
is crystalline at all classical points of $W_{F'}$. 
\end{proof}

By the ``preliminary reduction'' in the beginning of the proof, it is enough to show the extension of $a^\times$ exists after we base change to $F'$. Hence, we can replace $F'$ by $F$ and assume that $F'=F$ in the conclusion of Lemma \ref{crispoints}; we can also assume that $F$ is Galois over $\br\BQ_p=W(k)[1/p]$. Recall, $U=\BB^{s+t}=\BB_F^{s+t}$; this is the generic fiber of the smooth $p$-adic formal scheme 
\[
\fkU={\rm Spf}(\CO_F\langle x_1,\ldots, x_{s+t}\rangle)
\]
over $\CO_F$. Lemma \ref{crispoints} with $F'=F$ implies that the local system $\BL$ over $U$ satisfies the``pointwise crystallinity criterion'' of Guo-Yang \cite[Thm. 7.2]{GY} and, hence, is crystalline (with respect to $\fkU$). Recall that, as in \cite[Def. 4.4]{GY}, this means that there is a locally free Frobenius isocrystal $(\CE,\phi_\CE)\in {\rm Isoc}^\phi(\fkU_{k, {\crys}})$, together with a Frobenius equivariant isomorphism of $\BB_{\crys}$-vector bundles
\[
\vartheta: \BB_{\crys}(\CE)\xrightarrow{\sim} \BB_{\crys}\otimes_{\BZ_p} \BL
\]
over $U_{\proet}$.
Here, the notations are as in \emph{loc. cit.}. In particular, ${\rm Isoc}^\phi(\fkU_{k, {\crys}})$ is the category of Frobenius isocrystals over the crystalline site $(\fkU_k/W(k))_\crys$, where $\fkU_k$ is the special fiber $\Spec(k[x_1,\ldots ,x_{s+t}])=\BA^{s+t}_k$.
Moreover, in the above, we can take $(\CE,\phi_\CE)$ to be  the value $\CE_{\crys, \BL}$ of the crystalline Riemann-Hilbert functor of \cite[Thm. 5.20]{GY} at the local system $\BL$, see 
\cite[Thm. 5.20 (v)]{GY} and its proof.

We next claim that $\CE_{\crys, \BL}$ is ``constant'', i.e. obtained by base change from the Frobenius isocrystal  over $\Spec(k)$ which is given by $b$.

Note that, in our set-up, we can consider 
\[
\fkU_0=\Spf(W(k)\langle x_1,\ldots, x_{s+t}\rangle)
\]
with its standard Frobenius lift $\phi_0$ taking $x_i$ to $x_i^p$. Then, by a classical result (see e.g. \cite[Prop. 2.11]{GY}) the Frobenius isocrystal $\CE_{\crys, \BL}$ is functorially uniquely determined by a triple $(\CV_0, \nabla_0, \phi_{\CV_0})$, where
\begin{itemize}
\item $\CV_0$ is a vector bundle  of rank $n$ over $U_0=\fkU_{0,\eta}=\BB^{s+t}_{W(k)[1/p]}$,
\item $\nabla_0$ is a flat connection on $\CV_0$,
\item   $\phi_{\CV_0}$ is a  $\phi_0$-linear Frobenius endomorphism $\phi_{\CV_0}$ whose linearization $\phi^\#_{\CV_0}: \phi^*_0\CV_0\xrightarrow{\ } \CV_0$ is a horizontal    isomorphism. 
\end{itemize}

To prove our claim, it is enough to show that the triple $(\CV_0, \nabla_0, \phi_{\CV_0})$ is constant, i.e. isomorphic to 
$(\br V\otimes_{\br \BQ_p}\CO_{U_0}, 1\otimes d, b\otimes\phi_0)$. 

The vector bundle $\CV_0$ over $U_0$ is trivial, by a result of L\"{u}tkebohmert \cite{lu}. After choosing a basis of $\CV_0$ we can  write the connection $\nabla_0$ via an $n\times n$ matrix $\Omega=(\omega_{ij})$ of $1$-forms $\omega_{ij}\in \Omega^1_{U_0}(U_0)$, while $\phi_{\CV_0}=C\cdot \phi_0 $ for an $n\times n$ matrix $C=(c_{ij})$, $c_{ij}\in \CO_{U_0}(U_0)=\br\BQ_p\langle x_1,\ldots, x_{s+t}\rangle$. By the construction of the crystalline Riemann-Hilbert functor $ \CE_{\crys, -}$ and \cite[Thm. 4.12 (iii)]{GY}, the base change of $(\CV_0, \nabla_0)$ from $\br\BQ_p$ to $F$ is isomorphic to the pair $(\BD_{\rm dR}(\BL), \nabla)$ over $U$ which is
attached to $\BL$ by the $p$-adic Riemann-Hilbert correspondence of \cite{LiuZhu}; as above, the pair $(\BD_{\rm dR}(\BL), \nabla)$ is isomorphic to the trivial pair $(V\otimes_{\BQ_p}\CO_{U}, 1\otimes d)$. Hence, there is an invertible $n\times n$ matrix $g=g(\underline x)$ with entries in 
$\CO_{U}(U)=\CO_{U_0}(U_0)\otimes_{\br\BQ_p}F$, such that
\[
dg\, g^{-1}-g\Omega g^{-1}=0.
\]
This gives $\Omega=g^{-1}dg$ and since for each $\sigma\in \Gamma=\Gal(F/\br\BQ_p)$, $\sigma\Omega=\Omega$, we obtain $\sigma(g)^{-1}d(\sigma(g))=g^{-1}dg$. In turn, this implies $d(\sigma(g)g^{-1})=0$. It follows that $\sigma(g)g^{-1}=:c_\sigma\in F^*$
and so $\sigma\mapsto c_\sigma$ gives a $1$-cocycle $\Gamma\to F^*$. By Hilbert $90$, there is $\delta\in F^*$ such that $c_\sigma=\sigma(\delta)\delta^{-1}$. Then $h:=\delta^{-1}g$ satisfies $\sigma(h)=h$ and $dh\, h^{-1}-h\Omega h^{-1}=0$. Hence, $h$ lies in  $\GL_n(\CO_{U_0}(U_0))$ and gives an isomorphism of $(\CV_0, \nabla_0)$ with the trivial pair $(V\otimes_{\BQ_p}\CO_{U_0}, 1\otimes d)$ over $U_0$. 
 Under the identification $\CV_0=V\otimes_{\BQ_p}\CO_{U_0}=\br V\otimes_{\br\BQ_p}\CO_{U_0}$  given by this isomorphism, the linearization $\phi^\#_{\CV_0}$ of $\phi_{\CV_0}$ is given by  $C\in {\rm End}_{\br\BQ_p}(\br V)\otimes_{\br\BQ_p}\CO_{U_0}(U_0)$. Then $\phi_{\CV_0}=C\cdot \phi_0$. Since $\phi^\#_{\CV_0}$ is horizontal for the connection $\nabla_0=1\otimes d$, we see that $dC=0$, i.e. $C$ is constant, therefore $C=b\in \GL(\br V)=\GL_n(\br\BQ_p)$. 
Hence, the triple $(\CV_0, \nabla_0, \phi_{\CV_0})$ is indeed   isomorphic to 
$(\br V\otimes_{\br\BQ_p}\CO_{U_0}, 1\otimes d, b\otimes\phi_0)$, as claimed. Therefore, $\CE_{\crys, \BL}$ is  constant  and obtained by pull-back from the Frobenius isocrystal over $\Spec(k)$ given by $b$.

It then follows, using the  functoriality  of the construction of  $\BB_{\crys}(\CE)$ in \cite[4.1]{GY}, that we have a Frobenius equivariant isomorphism 
of $\BB_{\crys}$-vector bundles
\[
 \BB_{\crys}(\CE_{\crys, \BL})\simeq \br V\otimes_{\br\BQ_p}\BB_{\crys}
\]
where the Frobenius action on $\br V=V\otimes_{\BQ_p}\br \BQ_p$ is given by $b\otimes {\rm Frob}$. Hence, $\vartheta$ above amounts to a Frobenius equivariant isomorphism
\begin{equation}\label{isoN2}
 \vartheta: \br V\otimes_{\br\BQ_p}\BB_{\crys}\xrightarrow{\sim}   \BL\otimes_{\BZ_p}\BB_{\crys}
\end{equation}
of $\BB_{\crys}$-vector bundles over $U_{\proet}$. We will now use this isomorphism to obtain a $b$-framing of the shtuka $\sV(\BL)$ over $U^\diam$.
We set, for simplicity, $R=\CO_F\langle x_1,\ldots, x_{s+t}\rangle$, so that $U=\Spa(R[p^{-1}], R)$ and consider 
the perfectoid cover $\Spa(A, A^+)\to \Spa(R[p^{-1}], R)$ obtained as in \S \ref{sss:covers}.
Evaluating the isomorphism (\ref{isoN2}) at the corresponding pro-\'etale cover of $U=\Spa(R[p^{-1}], R)$ gives
\begin{equation}\label{iso2}
\br V\otimes_{\br\BQ_p}{\BB}_{\crys}(A, A^+)\xrightarrow{\ \sim\ } \Lambda\otimes_{\BZ_p} {\BB}_{\crys}(A, A^+).
\end{equation}
Base changing (\ref{iso2}) by  ${\BB}_{\crys}(A, A^+)\hookrightarrow {\BB}_{\rm max}(A, A^+)$ gives
\begin{equation}\label{iso3}
\br V\otimes_{\br\BQ_p}{\BB}_{\rm max}(A, A^+)\xrightarrow{\ \sim\ } \Lambda\otimes_{\BZ_p} {\BB}_{\rm max}(A, A^+).
\end{equation}
These isomorphisms respect the actions of Frobenius $\phi$ and $\CG_{R}$: For (\ref{iso3}) the Galois group $\CG_{R}$   acts on the LHS  only on  ${\BB}_{\rm max}(A,A^+)$ and diagonally on the RHS, while   $\phi$ acts diagonally on the LHS and on the RHS only on ${\BB}_{\rm max}(A, A^+)$.

Set $S=\Spa(A^\flat, A^{\flat+})$ for the tilt of $\Spa(A, A^{+})$; this gives a  cover $q: S=\Spa(A^\flat, A^{\flat+})\to U^\diam$. 
Observe that 
\[
{\BB}^+_{\rm max}(A,A^+)=W(A^{\flat+})\Big\langle \frac{p-[p^\flat]}{p}\Big\rangle[p^{-1}]=W(A^{\flat+})\Big\langle \frac{[p^\flat]}{p}\Big\rangle[p^{-1}], 
\]
where $p^\flat=(p, p^{1/p}, \ldots )\,{\rm mod}\, p\in \CO_{C^\flat}\subset A^{\flat+}$ is a pseudo-uniformizer of $A^\flat$.  
Indeed,  $\ker(\theta)\subset W(A^{\flat+})$ is generated by $\xi:=p-[p^\flat]$ (\cite[Lem. 3.6.3]{KL}), and $|\xi|=|p-[p^\flat]|\leq |p|$ is equivalent to $|[p^\flat]|\leq |p|$. Hence, (\ref{beta1}) gives a natural Frobenius equivariant map  
 \[
{\beta}: {\BB}^+_{\rm max}(A,A^+)\to \Gamma(\CY_{[1,\infty)}(S), \CO).
 \]
This also gives
\[
\beta[t^{-1}]: {\BB}_{\rm max}(A,A^+) \to \Gamma(\CY_{[1,\infty)}(S), \CO)[\beta(t)^{-1}].
\]
 Recall that the completion of 
${\BB}^+_{\rm max}(A,A^+)$ along $\ker(\theta_{\rm max})$ identifies with ${\BB}^+_{\rm dR}(A,A^+)$.  
The base changes of  the  ${\BB}^+_{\rm max}(A,A^+)$-modules $\br V\otimes_{\br\BQ_p}{\BB}^+_{\rm max}(A, A^+)$ and $\Lambda\otimes_{\BZ_p} {\BB}^+_{\rm max}(A,A^+)$ by   ${\BB}^+_{\rm max}(A,A^+)\to {\BB}^+_{\rm dR}(A,A^+)$ give
${\BB}^+_{\rm dR}(A,A^+)$-lattices $\BM_0(A,A^+)$ and $\BM(A,A^+)$, respectively, in
\begin{equation}\label{iso4}
 \br V\otimes_{\br\BQ_p}{\BB}_{\rm dR}(A, A^+)\simeq \Lambda\otimes_{\BZ_p} {\BB}_{\rm dR}(A, A^+),
\end{equation}
where the isomorphism is the base change of (\ref{iso3}) by ${\BB}^+_{\rm max}(A,A^+)\to {\BB}^+_{\rm dR}(A,A^+)$. 
Base changing (\ref{iso3}) by $\beta[t^{-1}]$  gives a $\phi$-equivariant meromorphic isomorphism 
 \[
 \br V\otimes_{\br\BQ_p} \CO_{\CY_{[1,\infty)}(S)\setminus \cup_{n\geq 0} \phi^n(S^\sharp)} \xrightarrow{\ \sim\ } \Lambda\otimes_{\BZ_p}  \CO_{\CY_{[1,\infty)}(S)\setminus\cup_{n\geq 0} \phi^n(S^\sharp)}.
 \]
We can see, using the construction of the shtuka $\sV(\BL)$, that this extends to an isomorphism
\[
\iota_{r}: \br V\otimes_{\br\BQ_p} \CO_{\CY_{[r,\infty)}(S)}\xrightarrow{\ \sim\ } \sV(\BL)_{|\CY_{[r,\infty)} (S)}
\] 
 for $r\gg 0$, which is Frobenius equivariant, when the Frobenius on the LHS given by $b\otimes {\rm Frob}$. Indeed, 
 the claim
follows from the construction of $\sV(\BL)$ in \cite[\S 2.5.1, \S 2.6]{PRCJM},
 since the shtuka $\sV(\BL)$ over $U^\diam$ which is associated to 
the de Rham local system $\BL$ over $U$, is given using the   lattices $\BM_0(A,A^+)$ and $\BM(A,A^+)$ and the cover $q$,
see also \cite[Def. 2.6.4, Prop. 2.6.3]{PRCJM}.

 The isomorphism $\iota_r$ gives a $b$-framing of  $\sV(\BL)$ over $S$. We now show that $\iota_r$ descends along the cover $q$ to $U^\diam$. Since $U^\diam=\Spd(A, A^+)/\und{\CG_{R}}$ by \cite[Lem. 10.1.7]{SW}, it is enough to show that $\iota_{r}$ is equivariant for the action of the Galois group $\CG_{R}$. This follows from the construction and the fact that the isomorphism (\ref{iso2}) is Galois equivariant. Hence, we obtain a $b$-framing of $\sV(\BL)$ over $U^\diam$, which we will also denote by $\iota_r$.
\medskip

{\bf B3) Extending the framing:} 

It remains to show that we can adjust the framing $\iota_r$ of $\sV(\BL)$ we just constructed in the section above, so that it extends the original framing $\iota^\times_r$ over $U^{\times\diam}$ given by $a^\times$. 

The discrepancy between the 
restriction $(\iota_r){|_{U^{\times\diam}}}$ and $\iota^\times_r$ is given by a $U^{\times\diam}$-section of the automorphism $v$-sheaf $\tilde G_b:=\und{\rm Aut}(\CE^b)$, where $\CE^b$ is the bundle corresponding to $b$ over the (absolute) Fargues-Fontaine curve. 

 Let $V$ be an analytic adic space over $\Spa(F,\CO_F)$ with $|V|$ connected which admits an open cover
\[
V=\bigcup_{n\geq 1}\Spa(R_n[1/p], R_n).
\]

We will assume that, for each $n$, $R_n$ is a Noetherian, normal and flat $p$-adically complete $\CO_F$-algebra and that the localizations of $R_n$ at height $1$ primes over $p$ are dvrs which are formally smooth over $\CO_F$.

\begin{proposition}\label{BC} Under the above assumptions, we have
$
\tilde G_b(V^\diam)= J_b(\BQ_p).
$
\end{proposition}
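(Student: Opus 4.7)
The plan is to analyze the $v$-sheaf $\tilde G_b$ via the slope decomposition of $\CE^b$ on the absolute Fargues-Fontaine curve, reducing the claim to a vanishing statement for positive Banach-Colmez spaces. Since we have already reduced to $G=\GL_n$, we may write $\CE^b=\bigoplus_{i=1}^{r}\CE_{\lambda_i}^{m_i}$ with distinct slopes $\lambda_1>\cdots>\lambda_r$, and the induced slope filtration gives a short exact sequence of $v$-sheaves of groups
\begin{equation*}
1\to N\to\tilde G_b\to\und{J_b(\BQ_p)}\to 1,
\end{equation*}
in which the quotient identifies with the constant sheaf attached to $J_b(\BQ_p)=\prod_i\Aut(\CE_{\lambda_i}^{m_i})$ and $N$ is the subsheaf of automorphisms that restrict to the identity on the associated graded. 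The sequence admits a canonical section over $\Spd(\br\BQ_p)$ coming from the natural embedding $J_b(\BQ_p)\subset\GL_n(\br\BQ_p)$ acting on $\CE^b$. The sheaf $N$ has a further filtration by normal sub-$v$-sheaves whose successive quotients are the abelian $v$-sheaves $\und{\Hom}(\CE_{\lambda_j}^{m_j},\CE_{\lambda_i}^{m_i})$ for $i<j$; since $\lambda_i-\lambda_j>0$, each of these is a positive Banach-Colmez space in the sense of \cite{FS}.

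Taking $U^{\times\diam}$-sections in the above exact sequence and using the section, the desired equality $\tilde G_b(U^{\times\diam})=J_b(\BQ_p)$ reduces to two statements. First, $\und{J_b(\BQ_p)}(U^{\times\diam})=J_b(\BQ_p)$; this holds because $|U^{\times\diam}|$ is homeomorphic to the connected space $|U^\times|$ whereas $J_b(\BQ_p)$ is locally profinite, hence totally disconnected, so every continuous map $|U^\times|\to J_b(\BQ_p)$ is constant. Second, $N(U^{\times\diam})=0$; by d\'evissage along the filtration of $N$, this reduces to the key vanishing statement that every positive Banach-Colmez space of the form $\und{\Hom}(\CE_{\lambda_j}^{m_j},\CE_{\lambda_i}^{m_i})$, or equivalently the positive Banach-Colmez space attached to $\CO(\lambda)$ for any rational $\lambda>0$, has trivial $U^{\times\diam}$-sections.

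This last vanishing is the main obstacle and will be the heart of the argument. The strategy to prove it is to work with a pro-\'etale perfectoid cover $\Spa(A,A^+)\to U^\times$ as in \S\ref{sss:covers}, with tilt $S=\Spa(A^\flat,A^{\flat+})$: a $U^{\times\diam}$-section of the Banach-Colmez sheaf attached to $\CO(\lambda)$ amounts to a $\CG_R$-invariant element of $H^0(X_{FF,S},\CO(\lambda))$. Combining a fundamental exact sequence of $p$-adic Hodge theory, which expresses such cohomology groups as extensions involving a finite-dimensional $\BQ_p$-piece and an analytic piece, with the explicit Laurent-series description of rigid analytic functions on $U^\times=\BB^{\times s}\times\BB^t$ and the Frobenius equivariance inherent in the description of $\CO(\lambda)$, should force any such invariant section to vanish. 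It is essential here that $F$ is discretely valued and that $U^\times$ is a genuine rigid analytic variety rather than a perfectoid space, since over perfectoid bases positive Banach-Colmez spaces have abundant non-trivial sections.
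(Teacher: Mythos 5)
Your structural reduction matches the paper's. You decompose $\tilde G_b$ via the slope filtration into a quotient $\und{J_b(\BQ_p)}$ and a unipotent part $N$ filtered by positive Banach--Colmez spaces; the paper instead quotes \cite[Prop.\ III.5.1]{FS} for the semi-direct product decomposition $\tilde G_b = \CU \rtimes \und{J_b(\BQ_p)}$, with $\CU$ a successive extension of positive $\mathcal{BC}(\lambda)$'s, but these are the same picture. Your treatment of the $\und{J_b(\BQ_p)}$ part (connectedness of $|U^\times|$ plus total disconnectedness of $J_b(\BQ_p)$) is exactly the paper's.

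However, you have left the actual content of the proposition unproved. You correctly identify that everything reduces to showing $\mathcal{BC}(\lambda)(U^{\times\diam}) = 0$ for $\lambda>0$, and then you say this ``should'' follow from ``a fundamental exact sequence of $p$-adic Hodge theory'' combined with ``the explicit Laurent-series description of rigid analytic functions on $U^\times$'' and ``Frobenius equivariance.'' This is a hope, not an argument, and it does not resemble what actually makes the paper's proof work. The point is not a Laurent-series analysis on $U^\times$ itself: the sections of $\mathcal{BC}(\lambda)$ over the pro-\'etale perfectoid cover $\Spd(A_n,A_n^+)$ are computed as $\BB_\crys(A_n,A_n^+)^{\phi^s = p^r}$, and the Galois-invariance condition demands knowing $\BB_\crys(A_n,A_n^+)^{\CG_{R_n}} = \br\BQ_p$ (equivalently $\BB_{\rm dR}(A_n,A_n^+)^{\CG_{R_n}} = F$). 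The paper proves this Galois-invariants computation (Lemma \ref{Ginv}(i)) by a localization argument: embedding $\widehat{\overline R}$ into a product of completions at height-$1$ primes above $p$, and reducing to Ohkubo's theorem for complete discrete valuation rings with possibly imperfect residue field. None of that is visible in your sketch, and nothing in a ``Laurent-series description of functions on $U^\times$'' would deliver it. Until you can carry out the computation $\BB_{\rm dR}^{\nabla}(R_n)^{\CG_{R_n}} = F$ (or an equivalent), the vanishing of positive Banach--Colmez sections — which is the entire content of the proposition — is unjustified.
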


Note that  we can write
\[
U^{\times}=\bigcup_{n\geq 1}\Spa(R_n[1/p], R_n)
\]
with 
$
R_n=\CO_F\langle x_1, p^n/x_1, \ldots, x_s, p^n/x_s, x_{s+1}, \ldots  , x_{s+t}\rangle
$
satisfying the above. The topological space $|U^\times|$ is connected. Hence,  Proposition \ref{BC} applies to $V=U^{\times}$ and we obtain
$
\tilde G_b( U^{\times\diam})= J_b(\BQ_p).
$

\begin{proof}
By \cite[Prop.
III.5.1]{FS}, the group sheaf $\tilde G_b$ is a semi-direct product  
\begin{equation}\label{semidirect}
 \tilde G_b=\CU\rtimes \und{J_b(\BQ_p)},
\end{equation}
where  $\CU$ is a unipotent group sheaf which is a successive extension of {\sl positive} (absolute) Banach-Colmez spaces ${\mathcal BC}(\lambda)$,    cf. \cite[Prop. 4.2.5]{PRCJM}.

Set $A^+_n=\widehat{\overline R}_n$, $A_n=\widehat{\overline R}_n[p^{-1}]$, constructed as in \S \ref{sss:covers}. Then
\begin{equation}\label{descent1}
\Spa(R_n[1/p], R_n)^\diam=\Spd(A_n, A^+_n)/\underline{\CG_{R_{n}}},
\end{equation}
with $\CG_{R_n}=\Gal({\overline R}_n[p^{-1}]/R_n[1/p])$.

\begin{lemma}\label{Ginv}  Let $R_n$ be an $\CO_F$-algebra satisfying the above assumptions. 
\begin{itemize}
\item[i)] We have ${\rm B}^\nabla_{\rm cris}(R_n)^{\CG_{R_n }}=\BB_{\rm cris}(A_n, A^+_n)^{\CG_{R_n}}=\br\BQ_p$.

\item[ii)] For   $\lambda>0$, we have 
$
{\mathcal BC}(\lambda)(\Spa(R_n[1/p], R_n)^\diam)=(0).
$
\end{itemize}
\end{lemma}

\begin{proof}
We remark that \cite[Rem. 4.2.8]{PRCJM} sketches the proof of a similar result following a suggestion of Scholze, see also Remark \ref{deJong} below.

Part i):  Note that \cite[Cor. 4.8]{Sh}, based on \cite[Cor. 5.3.7]{Br}, gives a similar conclusion, but in there it is assumed that the ring satisfies \cite[Set-up 3.1]{Sh}.
We give an argument which works in our situation: Set $R=R_n$, $A=A_n$, for simplicity. 
First notice that, as in \emph{loc. cit.}, it is enough to show ${\rm B}^\nabla_{\rm dR}(R)^{\CG_{R}}=\BB_{\rm dR}(A, A^+)^{\CG_{R}}=F$. 
Let $T$ be the set of height $1$ prime ideals of $\overline R$ that lie over $p$. Let $\widehat{\overline{R}_{\frak p}}$ denote the $p$-adic completion of the localization $\overline R_{\frak p}$ of $\overline R$ at $\frak p$. Set $(A_{\frak p}, A^+_{\frak p})=(\widehat{\overline{R}_{\frak p}}[p^{-1}], \widehat{\overline{R}_{\frak p}})$ which also gives a perfectoid pair, cf. \cite[Lem. 6.5]{Sh}. Now observe that, since $\overline R$
is a union of Noetherian normal domains which are finite over $R$, the natural ring homomorphisms
\[
 {\overline R}\to \prod_{\frak p\in T} {\overline{R}_{\frak p}},  \qquad  \widehat{\overline R}\to \prod_{\frak p\in T}\widehat{\overline{R}_{\frak p}},
\]
are injective with $p$-torsion free cokernel, cf. the proof of \cite[Lem. 6.13]{Sh}. As in \cite[Prop. 2.37]{Sh}, it follows that the natural ring homomorphism
\[
\BB_{\rm dR}(A, A^+)\to \prod_{\frak p\in T}\BB_{\rm dR}(A_{\frak p}, A^+_{\frak p})
\]
is injective. Hence, 
\[
\BB_{\rm dR}(A, A^+)^{\CG_{R}}\subset \prod_{\frak p\in T}\BB_{\rm dR}(A_{\frak p}, A^+_{\frak p})^{\CG_{R}(\frak p)},
\]
where $\CG_{R}(\frak p)=\Gal(\overline{R}_{\frak p}[p^{-1}]/R[p^{-1}])$ is identified with the (decomposition) subgroup of $\CG_R$
preserving the prime $\frak p$. 

For each height $1$ prime $\frak q$ of $R$ above $p$, we let $\CK_{\frak q}=\CO_{\CK_{\frak q}}[p^{-1}]$ be the 
fraction field of the completion $\CO_{\CK_{\frak q}}$ of the localization $R_{\frak q}$. Fix an algebraic closure $\overline\CK_{\frak q}$ of $\CK_{\frak q}$. Let $\CO_{\overline\CK_{\frak q}}$ denote the ring of integers of $\overline \CK_{\frak q}$ and denote the $p$-adic completion of $\CO_{\overline\CK_{\frak q}}$ by  $\widehat{\CO}_{\overline\CK_{\frak q}}$.
By \cite[Cor. 4.3]{Oh},
\begin{equation}\label{Ohkuba}
 \BB_{\rm dR}(\widehat{\CO}_{\overline \CK_{\frak q}}[p^{-1}], \widehat{\CO}_{\overline\CK_{\frak q}})^{\Gal(\overline\CK_{\frak q}/\CK_{\frak q})}=F.
\end{equation}
(cf. \cite[Lem. 6.9]{Sh}).

Suppose now $\frak p\in T$ lies above $\frak q$ and corresponds to an $R$-embedding $\overline R\hookrightarrow \CO_{\overline \CK_{\frak q}}$.
The resulting inclusion  $\overline R_{\frak p}\hookrightarrow \CO_{\overline \CK_{\frak q}}$ is $\Gal(\overline\CK_{\frak q}/\CK_{\frak q})$-equivariant,
where $\Gal(\overline\CK_{\frak q}/\CK_{\frak q})$ acts on $\overline R_{\frak p}$ via a corresponding surjection $\Gal(\overline\CK_{\frak q}/\CK_{\frak q})\to \CG_{R}(\frak p)$. By \cite[Lem. 6.6]{Sh}, there is a corresponding $\Gal(\overline\CK_{\frak q}/\CK_{\frak q})$-equivariant inclusion
\[
\BB_{\rm dR}(A_{\frak p}, A^+_{\frak p})\hookrightarrow \BB_{\rm dR}(\widehat{\CO}_{\overline \CK_{\frak q}}[p^{-1}], \widehat{\CO}_{\overline\CK_{\frak q}}).
\] 
Hence, (\ref{Ohkuba}) above gives
\[
\BB_{\rm dR}(A_{\frak p}, A^+_{\frak p})^{\CG_R(\frak p)}=F.
\]
(cf.  \cite[Lem. 6.10]{Sh}). The result now follows.

Part ii): If $\lambda=r/s>0$, with $(r,s)=1$, write $E_{r/s}$ for the $s$-dimensional simple isocrystal of slope $r/s$ over $\br\BQ_p$. Then
\[
{\mathcal BC}(\lambda)(\Spd(A_n, A^+_n))=(E_{r/s}\otimes_{\br\BQ_p}\BB_{\crys}(A_n, A^+_n))^{\phi=1}=(\BB_{\crys}(A_n, A^+_n))^{\phi^s=p^r}
\]
(cf. \cite[Thm. 15.2.5]{SW}). 
Hence, since by (\ref{descent1}),
\[
{\mathcal BC}(\lambda)(\Spa(R_n[1/p], R_n)^\diam)\subset ({\mathcal BC}(\lambda)(\Spd(A_n, A^{+}_n)))^{\CG_{R_{n}}},
\]
we have
\[
{\mathcal BC}(\lambda)(\Spa(R_n[1/p], R_n)^\diam)\subset(\BB_{\crys}(A_n, A^+_n)^{\CG_{R_{n} }})^{\phi^s=p^r}.
\]
By part (i), $\BB_{\crys}(A_n, A^+_n)^{\CG_{R_{n} }}=\br\BQ_p$. Since $ \br\BQ_p^{\phi^s=p^r}=(0)$ for $r\neq 0$, the result follows. 
\end{proof}
 
 \begin{remark}\label{deJong}
 In the above argument, we  reduce the proof  
  to showing a corresponding result for the complete dvrs $\CO_{\CK_{\frak q}}$ which have non-perfect residue field. 
 An alternative proof which reduces to the dvrs with perfect residue field $\CO_K$, where $K$ ranges over all finite extensions of $F$, can be given
by considering the collection of all homomorphisms $R_n\to \CO_K$ and using \cite[Prop. 7.3.6]{deJong} to recover the analogue of \cite[Lem. 6.13]{Sh}.  
 \end{remark}

 By covering $V$ by  $\Spa(R_n[1/p], R_n)$, we see that Lemma \ref{Ginv} (ii) also implies that
${\mathcal BC}(\lambda)(V^\diam)=(0)$, for $\lambda>0$. Since $\CU$ is given as a successive extension of ${\mathcal BC}(\lambda)$, $\lambda>0$, we also obtain $\CU(V^\diam)=(0)$. Now, by (\ref{semidirect}), we have 
\[
\tilde G_b(V^\diam)=\underline{J_b(\BQ_p)}(V^\diam)=C^0(|V^\diam|,  J_b(\BQ_p))=J_b(\BQ_p),
\]
since $|V^\diam|=|V|$ is connected. This completes the proof of Proposition \ref{BC}.
\end{proof}

We can now complete  the proof of the main result, Theorem \ref{mainThm}:   Proposition \ref{BC} applied to $V=U^\times$  implies  
\[
(\iota_r){|_{U^{\times\diam}}}=j\cdot \iota^\times_r,
 \]
 for some 
$j\in J_b(\BQ_p)$. We can then redefine the framing to be $j^{-1}\cdot \iota_r$
so that $\iota^\times_r$  extends.  This completes the argument for extending the original framing.

We have now extended both the shtuka $\sV^\times=\sV(\BL^\times)$ and its framing $\iota_r$, from $U^{\times\diam}$ to $U^\diam$;
this gives an extension of $a^\times: U^{\times\diam}\to {\rm Sht}_K$ to $a: U^{\diam}\to {\rm Sht}_K$ and, by the above, completes
the proof. 
 \end{proof}

\subsection{The proof of Theorem \ref{constantIntro}:} In this section, we  show the result below, which is also Theorem \ref{constantIntro} in the introduction.
It is used in the alternative proof of Corollary \ref{BrodySht}, see Remark \ref{alternativeHyperbolicity}:

\begin{theorem}\label{constant}
Suppose that $S$ is a smooth quasi-projective variety  defined over a finite extension $F/\br E$ such that $S\times_{\Spec(F)}\Spec(\bar F)$ is irreducible and has finite \'etale fundamental group.
Every morphism of diamonds $f: S^\diam\to \Sht_{(G, b, \mu, K)}$ over $\Spd(\br E)$ is constant, i.e. $f$ factors through a point 
\[
S^\diam\to \Spd(F)\to \Sht_{(G, b, \mu, K)}
\]
over $\Spd(\br E)$.
\end{theorem}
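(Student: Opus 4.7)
I will prove this in three main steps, paralleling the opening of the proof of Theorem \ref{mainThm}.

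\emph{Step 1 (reduction to $\GL_n$).} First, I would reduce to the case $G = \GL_n$, $K = \GL_n(\BZ_p)$, exactly as in part (A) of the proof of Theorem \ref{mainThm}. Pick a closed embedding $\rho : \CG \hookrightarrow \GL_n$ of group schemes over $\BZ_p$, set $b' = \rho(b)$ and $\mu' = \rho \circ \mu$, and use the closed immersion
\[
\rho_* : \Sht_{(G,b,\mu,K)} \hookrightarrow \Sht_{(\GL_n, b', \mu', \GL_n(\BZ_p))} \times_{\Spd \br \BQ_p} \Spd \br E
\]
of $v$-sheaves over $\Spd \br E$; constancy of $\rho_* \circ f$ will then imply constancy of $f$ via the closed immersion property. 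Assuming $G=\GL_n$, the pullback of the universal pro-\'etale $\und{\GL_n(\BZ_p)}$-torsor $\BP^{\rm univ}$ along $f$ produces, via the dictionary of \S\ref{ss:LS}, an \'etale $\BZ_p$-local system $\BL$ of rank $n$ on the rigid analytic variety $S^{\rm an}$.

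\emph{Step 2 (trivializing the geometric monodromy).} The continuous monodromy representation $\pi_1^{\et}(S_{\bar F}, \bar x) \to \GL_n(\BZ_p)$ of $\BL|_{S_{\bar F}}$ has finite image since its source is finite by hypothesis, and its kernel cuts out a finite connected \'etale Galois cover $\tilde S \to S_{\bar F}$ on which $\BL$ trivializes. Standard spreading out arguments provide a finite extension $F'/F$ and a geometrically connected finite \'etale cover $S' \to S \times_F F'$ with $S' \times_{F'} \bar F \cong \tilde S$ (enlarging $F'$ if necessary to ensure connectedness); then $\BL|_{S'}$ has trivial geometric monodromy and is thus the pullback under the structure morphism $\pi_{S'} : S' \to \Spa(F')$ of some $\BZ_p$-local system $\BL_0$ on $\Spa(F')$. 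Combining parts (a) (for the finite faithfully flat cover $S' \to S \times_F F'$) and (b) (for $F \subset F'$) of Lemma \ref{con1}, it suffices to prove the theorem with $(S, F)$ replaced by $(S', F')$. Accordingly, assume $\BL = \pi_S^* \BL_0$ for $\pi_S : S \to \Spa(F)$.

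\emph{Step 3 (conclusion via the period map).} Functoriality of the $p$-adic Riemann--Hilbert correspondence of \cite{LiuZhu} implies that, under $\BL = \pi_S^* \BL_0$, the de Rham data $({\rm D}_{\rm dR}(\BL), \nabla, \Fil^\bullet)$ is the pullback via $\pi_S$ of the corresponding de Rham data attached to $\BL_0$. Unwinding the construction of $\sV(\BL)$ in \cite{PRCJM}, as recalled in Proposition \ref{propdeRham}(iii), the modification at the leg $S^\sharp$ that computes the Grothendieck--Messing period of $f$ is therefore also a pullback, and so
\[
\pi_{\rm GM} \circ f : S^\diam \longrightarrow {\rm Gr}_{G, \Spd \br E, \leq \mu}
\]
factors through some $F$-point $e_0 : \Spd(F) \to {\rm Gr}_{G, \Spd \br E, \leq \mu}$. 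Since $\pi_{\rm GM}$ is \'etale, $f$ factors through the fiber $\pi_{\rm GM}^{-1}(e_0) \to \Spd(F)$, an \'etale diamond over $\Spd(F)$ and hence a disjoint union $\bigsqcup_i \Spd(L_i)$ for finite separable extensions $L_i/F$. Connectedness of $|S^\diam|$ --- which holds since $S$ is geometrically connected (as $S_{\bar F}$ is irreducible) --- forces $f$ through a single $\Spd(L_i) \to \Sht_{(G,b,\mu,K)}$; since the morphism $S^\diam \to \Spd(L_i)$ is defined over $\Spd(F)$ the field $L_i$ embeds into $F$, and composing with $\Spd(F) \to \Spd(L_i)$ yields the required factorization $S^\diam \to \Spd(F) \to \Sht_{(G,b,\mu,K)}$.

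The main obstacle is the precise verification in Step 3 that $\pi_{\rm GM} \circ f$ depends functorially, with respect to base change, only on the pulled-back de Rham data. This reduces to naturality of the construction $\sV(-)$ of \cite{PRCJM} and of the identification of $\pi_{\rm GM}$ with the modification it induces at the untilt --- essentially built into the constructions, but requiring careful unpacking. A minor secondary point is ensuring that $S'$ can be arranged connected over $F'$; since each connected component of any finite \'etale cover of $S_{\bar F}$ still has finite geometric fundamental group, the hypothesis is preserved even if one must work component-by-component.
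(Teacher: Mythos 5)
Your Steps 1 and 2 match the paper's proof essentially verbatim (the paper also invokes rigid GAGA and L\"utkebohmert's Riemann existence theorem to see that the local system is algebraic, but that is a minor technicality). The problem is in Step 3, and it is not merely the ``careful unpacking'' you flag; there is a genuine gap. You claim that because the de Rham data of $\BL=\pi_S^*\BL_0$ is a pullback, ``the modification at the leg $S^\sharp$ that computes the Grothendieck--Messing period of $f$ is therefore also a pullback.'' This conflates the shtuka with the period. The shtuka $\sV(\BL)$ is determined by $\BL$ and is indeed a pullback, but the period map $\pi_{\rm GM}$ is not a function of the shtuka alone: it is computed by first trivializing the lattice $\BM_0$ \emph{using the framing} $\iota_r$ of $f$, and then recording $\beta^{-1}(\BM)$. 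The framing is extra data in $f:S^\diam\to\Sht_K$ not determined by $\BL$, and there is no reason a priori that it descends to $\Spd(F)$. So the statement ``$\pi_{\rm GM}\circ f$ is a pullback'' is unjustified, and with it the conclusion that it factors through an $F$-point.

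The paper closes exactly this gap by a comparison of trivializations: the framing of $f$ induces a horizontal trivialization $\alpha:(V\otimes_{\BQ_p}\CO_{S^{\rm an}},1\otimes d)\xrightarrow{\sim}({\rm D}_{\rm dR}(\BL),\nabla)$, the framing at the basepoint $s$ induces $\alpha_0:V\otimes_{\BQ_p}F\xrightarrow{\sim}{\rm D}_{\rm dR}(\BL_0)$, and there is a canonical horizontal isomorphism $c:{\rm D}_{\rm dR}(\BL)\xrightarrow{\sim}g^*{\rm D}_{\rm dR}(\BL_0)$. The discrepancy $A=g^*(\alpha_0)^{-1}\circ c\circ\alpha\in\GL_n(\CO_{S^{\rm an}})$ respects connections, so its matrix entries have zero differential, hence are constant; by geometric connectedness $A\in\GL_n(F)$. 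Therefore $\pi_{\rm GM}\circ f$ equals the pullback of $\pi_{\rm GM}\circ(f\circ s)$ translated by the constant $A$, which still factors through $\Spd(F)$. Your remaining étaleness-plus-connectedness argument to conclude is fine and coincides with the paper's, but you need the intermediate observation above --- that the framing discrepancy is horizontal, hence constant --- to make Step 3 correct.
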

\begin{proof}
By an argument as in \S \ref{mainProof} (A), we can quickly reduce to the case $G=\GL_n$, with $K=\GL_n(\BZ_p)$.
By the discussion in the beginning of \S \ref{mainProof} (B1), rigid analytic GAGA, and the rigid analytic Riemann existence theorem of L\"{u}tkebohmert,
we see that 
the pro-\'etale $\BZ_p$-local system $\BL=f^*\BL^{\rm univ}$   obtained by pulling back the universal pro-\'etale $\BZ_p$-local system
over $\Sht_{(\GL_n, b, \mu, K)}$ is indeed defined in the algebraic category, as a pro-\'etale $\BZ_p$-local system over the algebraic variety $S$.

Now fix an algebraic closure $\bar F$ of $F$ and a point $\bar s\in S(\bar F)$. By our assumption, there is a finite field extension $F\subset F'\subset \bar F$ and a finite \'etale cover $\pi: T\to S\times_FF'$ such that $T\times_{F'}\bar F$ is irreducible, $\bar s$ lifts to   an $F'$-rational point $t'$ of $T$, and $\pi^*\BL$ is  isomorphic to the pull-back of the  local system $\BL_0=(t')^*\BL$ over $\Spec(F')$ via the structure morphism $T\to \Spec(F')$.  
Suppose we can show that the composition 
\[
T^\diam\to (S\times_FF')^\diam\to \Sht_{(\GL_n, b, \mu, K)}
\]
factors through a point $\Spd(F')\to \Sht_{(\GL_n, b, \mu, K)}$ over $\Spd(\br E)$. Then, by applying Lemma \ref{con1} (a), we see that $(S\times_FF')^\diam\to \Sht_{(\GL_n, b, \mu, K)}$ factors through $\Spd(F')\to \Sht_{(\GL_n, b, \mu, K)}$. Next, by Lemma \ref{con1} (b), $f: S^\diam\to \Sht_{(\GL_n, b, \mu, K)}$ also factors through a point $\Spd(F)\to \Sht_{(\GL_n, b, \mu, K)}$ and the proof is complete. Hence, by this argument we have reduced the proof to the special case in which $S$ has an $F$-rational point $s\in S(F)$, and the local system $\BL=f^*\BL^{\rm unv}$ is   isomorphic to $g^*\BL_0$, with $\BL_0=s^*\BL$, where $g:S\to \Spec(F)$ is the structure morphism. We will assume we are in this situation from now on.

By Proposition \ref{propdeRham}, $\BL_0$ and $\BL$ are horizontal de Rham local systems. In fact, the proof of Proposition \ref{propdeRham} (ii) shows that the analytic bundle $(D_{\rm dR}(\BL), \nabla)$ with connection over $S^{\rm an}$ associated to $\BL$ by the $p$-adic Riemann-Hilbert functor admits a trivialization, i.e. an isomorphism   
\[
\alpha: (V\otimes_{\BQ_p}\CO_{S^{\rm an}}, 1\otimes d)\xrightarrow{\sim}  (D_{\rm dR}(\BL), \nabla)
\]
which is constructed by using the framing of the corresponding shtuka. There is a similar isomorphism  $\alpha_0: V\otimes_{\BQ_p}F \xrightarrow{\sim} D_{\rm dR}(\BL_0)$ over the point $\Sp(F)$. We also have an isomorphism $c: D_{\rm dR}(\BL)\xrightarrow{\sim}  D_{\rm dR}(g^*\BL_0) \xrightarrow{\sim}  g^*D_{\rm dR}(\BL_0) $ respecting the connections. The composition $g^*(\alpha_0)^{-1}\circ c \circ \alpha$ is given by a global section $A$ of ${\rm Aut}(V\otimes_{\BQ_p}\CO_{S^{\rm an}})\simeq \GL_n(\CO_{S^{\rm an}})$ whose matrix entries are functions with zero differential, hence $A$ belongs to ${\rm Aut}(V\otimes_{\BQ_p}F)=\GL_n(F)$. 

Consider now the composition
\[
 S^\diam\xrightarrow{ \ f \ }  \Sht_{(\GL_n, b, \mu, K)}\xrightarrow{\pi_{\rm GM}} {\rm Gr}_{\GL_n, \leq \mu}.
\]
By the definition of the period morphism $\pi_{\rm GM}$, the value of $\pi_{\rm GM}\circ f$ at a $\Spa(R, R^+)$-point of $S^\diam$,  is the point of ${\rm Gr}_{\GL_n,\leq \mu}$ obtained as follows: Recall the $\BB_{\rm dR}^+(R^\sharp, R^{\sharp+})$-lattices $\BM$ and $\BM_0$ associated to the shtuka, as in the proof of Proposition \ref{propdeRham}. We use the framing of the  shtuka to obtain a trivialization of  $\BM_0$ 
\[
\beta: V\otimes_{\BQ_p} \BB_{\rm dR}^+(R^\sharp, R^{\sharp+})=\BB_{\rm dR}^+(R^\sharp, R^{\sharp+})^n\xrightarrow{\sim} \BM_0
\]
and then consider the $\BB_{\rm dR}^+(R^\sharp, R^{\sharp+})$-lattice 
\[
\beta^{-1}(\BM)\subset  \BB_{\rm dR}(R^\sharp, R^{\sharp+})^n;
\]  
this defines a point of ${\rm Gr}_{\GL_n, \leq \mu}$.

Recall that, by \cite[\S 7]{ScholzeHodge}, we have $\BM_0=(\CE\otimes_{\CO_{S^{\rm an}}}\CO\BB_{\rm dR}^+)^{\nabla=0}$, with $\CE={\rm D}_{\rm dR}(\BL)$. We now observe that the   trivialization $\beta$ of  $\BM_0$ is induced by the (horizontal) trivialization $\alpha$ of $\CE={\rm D}_{\rm dR}(\BL)$ above; similarly for the local system $\BL_0$ over $\Spd(F)$. Hence, it follows that the composition
\[
S^\diam\xrightarrow{ \ f \ }  \Sht_{(\GL_n, b, \mu, K)}\xrightarrow{\pi_{\rm GM}} {\rm Gr}_{\GL_n, \leq \mu}\times_{\Spd(\BQ_p)}\Spd(F)
\]
is equal to $\Spd(F)^\diam \xrightarrow{ \ f\circ s\ }  \Sht_{(G, b, \mu, K)}\xrightarrow{\pi_{\rm GM}} {\rm Gr}_{\GL_n, \leq \mu}\times_{\Spd(\BQ_p)}\Spd(F)$ 
composed with the action of $A\in \GL_n(F)$. Hence, $\pi_{\rm GM}\circ f$ factors through a point $S^\diam\to \Spd(F)\to {\rm Gr}_{\GL_n, \leq \mu}$.
Since  $\pi_{\rm GM}: \Sht_{(\GL_n, b, \mu, K)}\to {\rm Gr}_{\GL_n, \leq \mu}$ is \'etale and $S$ is geometrically connected, $|f|: |S|\to |\Sht_{(\GL_n, b, \mu, K)}|$ also factors through a closed point
and we can now conclude the proof as above.
\end{proof}

\begin{remark} a) Theorem \ref{constant} relates to the theme of $p$-adic analytic versions of the theorem of the fixed part, see \cite[\S 5]{JV} for a discussion and other results in this direction.

\sk

b) Suppose the pro-\'etale $\Sht_\infty=\varprojlim_{K'} \Sht_{K'}\to \Sht_K$ is   perfectoid in the sense of \cite[Def. 4.3]{ScholzeHodge}. Then, the corresponding diamond is represented by a perfectoid space $P$ over $\br E$, and Theorem \ref{constant} follows quickly from the fact that all morphisms $S^{\rm an}\to P$ are constant, for example, see \cite[Prop. 2.10, Prop. 5.3]{JV}. This perfectoid property holds for the limit of Rapoport-Zink spaces by work of Scholze-Weinstein \cite{SWpDiv}.  It seems reasonable to expect that the perfectoid property is also true for all (limits  of) local Shimura varieties, but this does not seem to be known.
\end{remark}

   \section{Further questions}\label{s:questions}
  We conclude with a few questions. 
  
  \begin{altitemize}

\item[ 1)] Does the extension property in Theorem \ref{mainThm} still hold when $F$ is replaced by a general complete non-archimedean extension of $\br E$ such as $\BC_p$? A similar question can be asked about the hyperbolicity property in Theorem \ref{BrodySht}.

\item[ 2)] We can define on the local Shimura varieties $\CM_{G, b,\mu, K}$ a variant of the Kobayashi semi-distance, as extended to the non-archimedean set-up by Cherry, see 
\cite{cherry-kobayashi}, \cite[Def. 3.21]{JV}.  In this variant, we insist that the Kobayashi chains are given by morphisms  $f: \BB_{F}\to \CM_{G, b,\mu, K}$ defined over   (varied)  finite field extensions $F$ of $\br E$ and obtain a semi-distance
\[
d: \CM_{G, b,\mu, K}^{\rm class}\times \CM_{G, b,\mu, K}^{\rm class}\to \BR_{\geq 0}\cup\{+\infty\}
\]
on the set of classical points $\CM_{G, b,\mu, K}^{\rm class}$. One can then ask if $d$ is a distance, i.e. if $d(x,y)=0$ only when $x=y$. If this holds, it would be reasonable to say that $\CM_{G, b,\mu, K}$ is ``$p$-adically Kobayashi hyperbolic".

\item[ 3)] The extension theorem and the hyperbolicity properties shown in \cite{Borel}, \cite{OSZP} and in the current paper, are a common feature of global and local Shimura varieties, and also hold for the moduli spaces of $p$-adic shtukas. We are wondering if corresponding properties also hold for the function field analogues of these spaces: In the global case, these analogues are the moduli spaces of global shtukas over function fields introduced by Drinfeld, and studied by L. and V. Lafforgue, Genestier, Varshavsky,  and many others. In the local case, we have the moduli spaces of local shtukas considered by Hartl and Hartl-Viehmann. For example, one can ask if every rigid analytic morphism from the punctured disk over a completion of the function field to one of these spaces extends to  a morphism of the disk to the space (in the local case) or to a natural compactification (in the global case). (The question in the global case is, of course, somewhat imprecise.)
 \end{altitemize}

  \renewcommand{\bibliofont}{\small}

\end{document}